\numberwithin{equation}{section}
\theoremstyle{plain}
\newtheorem{thm}{Theorem}
\newtheorem*{thm*}{Theorem}
\numberwithin{thm}{section}
\newtheorem*{thm1.1}{Theorem~1.1}
\newtheorem*{thm1.2}{Theorem~1.2}
\newtheorem*{thm1.3}{Theorem~1.3}
\newtheorem{lem}[thm]{Lemma}
\newtheorem*{lem*}{Lemma}
\newtheorem{prop}[thm]{Proposition}
\newtheorem*{prop*}{Proposition}
\newtheorem*{corollary*}{Corollary}
\newtheorem*{observation*}{Observation}
\theoremstyle{definition}
\newtheorem*{claim*}{Claim}
\newtheorem*{conjecture*}{Conjecture}
\newtheorem{defn}[thm]{Definition}
\newtheorem*{defn*}{Definition}
\newtheorem*{fact*}{Fact}
\newtheorem*{case*}{Case}
\newtheorem{example}[thm]{Example}
\newtheorem*{example*}{Example}
\newtheorem*{assumption*}{Assumption}
\newtheorem{approach*}{Approach}
\theoremstyle{remark}
\newtheorem*{note*}{Note}
\newtheorem{remark}[thm]{Remark}
\newtheorem*{remark*}{Remark}
\newcommand{\I}{\mathcal{I}}
\newcommand{\Z}{\mathbb{Z}}
\newcommand{\R}{\mathbb{R}}
\newcommand{\cal}[1]{\mathcal{#1}}
\newcommand{\diam}{\text{diam}}
\newcommand{\GT}{\text{GT}}
\newcommand{\Aut}{\text{Aut}}
\newcommand{\lam}{\lambda}
\newcommand{\gamlam}{\Gamma_\lambda}
\author[Yibo Gao]{Yibo Gao}
\address{Department of Mathematics, Massachusetts Institute of Technology, \mbox{Cambridge, MA 02139}}
\email{\href{mailto:gaoyibo@mit.edu}{{\tt gaoyibo@mit.edu}}}
\author[Benjamin Krakoff]{Benjamin Krakoff}
\address{Department of Mathematics, Yale University, \mbox{New Haven, CT 06520}}
\email{\href{mailto:benjamin.krakoff@yale.edu}{{\tt benjamin.krakoff@yale.edu}}}
\author[Lisa Yang]{Lisa Yang}
\address{Department of Mathematics, Massachusetts Institute of Technology, \mbox{Cambridge, MA 02139}}
\email{\href{mailto:lisayang@mit.edu}{{\tt lisayang@mit.edu}}}
\begin{document}

\title{Diameter and Automorphisms of Gelfand-Tsetlin Polytopes}
\date{\today}

\begin{abstract}
We determine the diameter of the 1-skeleton and the combinatorial automorphism group of any Gelfand-Tsetlin polytope $\GT_{\lambda}$ associated to an integer partition $\lambda.$
\end{abstract}

\maketitle

\section{Introduction and Statement of Results}

Gelfand-Tsetlin (GT) polytopes are compact convex polytopes defined by a set of linear inequalities depending on a partition $\lam$ as shown in Figure~\ref{fig:ineqdiagram}. The polytope $\GT_\lam$ corresponds to all fillings of this triangular array with real numbers such that all rows and columns are weakly increasing.

\begin{figure}[h]
\begin{equation*}
\begin{matrix}
\lambda_1 \\
\rotatebox[origin=c]{270}{$\leq$} \\
x_{2, 1} & \leq & \lambda_2 \\
\rotatebox[origin=c]{270}{$\leq$} & & \rotatebox[origin=c]{270}{$\leq$} \\
x_{3, 1} & \leq & x_{3, 2} & \leq & \lambda_3 \\
\rotatebox[origin=c]{270}{$\leq$} & & \rotatebox[origin=c]{270}{$\leq$} & & \rotatebox[origin=c]{270}{$\leq$}\\
x_{4, 1} & \leq & x_{4, 2} & \leq & x_{4, 3} & \leq & \lambda_4 \\
\vdots & & \vdots & & \vdots & & \rotatebox[origin=c]{-45}{$\ldots$} \\
x_{n, 1} & \leq & x_{n, 2} & \leq & \ldots & \leq & x_{n, n-1} & \leq & \lambda_n
\end{matrix}
\end{equation*}
\caption{Inequality constraints of GT polytopes.}
\label{fig:ineqdiagram}
\end{figure}

GT polytopes arise from the study of representations of $GL_n(\mathbb{C})$ and have connections to areas of representation theory and algebraic geometry (see for example \cite{kirichenko2012schubert}). For any integer partition $\lam = (\lam_1,\ldots,\lam_n)$, let $n$ be the length of $\lam$ and let $\GT_\lam$ denote the GT polytope associated to $\lam$. Then the integral points within $\GT_\lam$ are in bijection with the number of semi-standard Young tableaux of shape $\lam$ with tableaux entries bounded by $n$. Furthermore, the integral points of $\GT_\lam$ parametrize a \emph{Gelfand-Tsetlin basis} of the $GL_n$-module with highest weight $\lam$, so the number of integral points equals the dimension of this module. GT polytopes can also be viewed as the marked order polytope of a poset which is discussed in \cite{stanley1986two} and \cite{ardila2011gelfand}. 

This paper describes the diameter of the 1-skeleton and the combinatorial automorphism group of GT polytopes. Since our results are purely combinatorial, it suffices to consider partitions of the form $\lambda = (1^{a_1}, 2^{a_2}, \ldots, m^{a_m})$ where $a_i > 0$ for all $1 \le i \le m$. This is explained in Section~\ref{sec:preliminaries} and in  Remark~\ref{rmk:ai}.

The 1-skeleton of a polytope is the graph obtained by looking only at its vertices and edges (the $0$ and $1$ dimensional faces). The shortest path between two vertices is the path with the minimum number of edges. The diameter of a graph is the maximum over all pairs of vertices of the length of the shortest path between the pair of vertices. Our first result is a formula for the diameter of the 1-skeleton of $\GT_\lam$ which we denote by $\diam(\GT_\lam)$.

\noindent We adopt the following notational conventions:
\begin{itemize}
\item The Kronecker delta function is defined as $\delta_{x,y} :=
    \begin{cases}
            1, &         \text{if } x=y,\\
            0, &         \text{if } x\neq y.
    \end{cases}$
\item For any $a \in \Z{>0}$, let $S_a$ denote the symmetric group on a set of size $a$.
\item For $\lam = (1^{a_1}, 2^{a_2}, \ldots, m^{a_m})$, its reverse partition is $\lam' := (1^{a_m},2^{a_{m-1}},\ldots,m^{a_1})$. If $a_i = a_{m+1-i}$ for all $1 \le i \le m$, then we say $\lam = \lam'$ and call $\lam$ a \emph{reverse symmetric} partition. 
\end{itemize}

\begin{thm}[Diameter of 1-skeleton]
For any GT polytope $\GT_\lambda$, $\diam(\GT_\lambda) = 2m - 2 - \delta_{1, a_1} - \delta_{1, a_m}$.
\label{thm:diameter}
\end{thm}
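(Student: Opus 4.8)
The plan is to work throughout with the realization of $\GT_\lambda$ as the marked order polytope of the triangular poset $P$ on positions $(i,j)$, $1 \le j \le i \le n$, whose diagonal cells $(i,i)$ are marked with value $\lambda_i$ (see \cite{ardila2011gelfand}); by the reduction already recorded we may take $\lambda = (1^{a_1}, \dots, m^{a_m})$. Since a marked order polytope is a lattice polytope, every vertex is an integral $\GT$ pattern with entries in $\{1, \dots, m\}$. The first step is the combinatorial description of vertices. Call a maximal connected set of equal-valued positions (adjacency in the Hasse diagram) a \emph{block}. A standard rigidity argument shows that an integral pattern is a vertex if and only if every block meets a marked diagonal cell; equivalently, every free entry is tied through a chain of tight inequalities to a mark of its own value. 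I will record each vertex by the nested chain of order filters $F_1 \supseteq \cdots \supseteq F_{m-1}$, where $F_k := \{(i,j) : x_{i,j} > k\}$, and translate the block condition into the statement that for every value $v$ each connected component of $F_{v-1} \setminus F_v$ contains a diagonal cell carrying $\lambda = v$.

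The second step is to describe edges. Moving along a one-dimensional face amounts to selecting a single connected set $S$ of unmarked positions of common value $c$ that has room to slide, and shifting it to value $c \pm 1$; thus each edge toggles one connected set $S$ inside exactly one filter $F_c$ and leaves all other filters unchanged. I will verify that such a slide produces another vertex precisely when the block condition is restored, and that every edge arises this way. The upshot is a reduction of graph distance to a reconfiguration distance: each edge toggles exactly one filter, so along any path from $u$ to $v$ the edges split according to which filter they touch, and those touching a fixed $F_k$ form a sequence of connected toggles carrying $F_k(u)$ to $F_k(v)$.

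This gives a clean lower bound. Using the elementary fact that the minimum number of connected sets whose symmetric difference equals a set $D$ is the number of connected components of $D$, any path from $u$ to $v$ has length at least $\sum_{k=1}^{m-1} c_k(u,v)$, where $c_k(u,v)$ is the number of components of $F_k(u) \bigtriangleup F_k(v)$. The heart of the argument is then the purely combinatorial claim that the block condition forces $c_k(u,v) \le 2$ for every interface $k$, and forces $c_k(u,v) \le 1$ when the interface borders a value of multiplicity one, namely $k=1$ when $a_1 = 1$ and $k = m-1$ when $a_m = 1$. Summing yields the bound $2(m-1) - \delta_{1,a_1} - \delta_{1,a_m}$ on $\sum_k c_k$, and I will exhibit an explicit pair of vertices whose filters realize all these component counts simultaneously, giving a matching lower bound for the diameter. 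For the global upper bound I will give a procedure that transforms any $u$ into any $v$ by reconciling the interfaces in an order compatible with the nesting $F_{k+1} \subseteq F_k$, spending at most $c_k(u,v)$ slides at interface $k$ while keeping every intermediate chain a valid vertex.

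The main obstacle is the quantitative block count: proving that the mark-containment condition caps $c_k(u,v)$ at two in general and at one next to a multiplicity-one end demands a careful analysis of how the fixed diagonal marks anchor the admissible filters inside the triangular poset, together with a construction showing the caps are jointly attainable. A secondary difficulty is the scheduling in the upper-bound procedure: I must ensure that reconciling one interface never breaks the nesting or the block condition at an interface already handled, so that the per-interface budgets genuinely add up without incurring extra slides.
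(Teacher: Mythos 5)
Your reduction breaks at its first structural claim: it is not true that each edge of $\GT_\lambda$ toggles exactly one filter $F_c$. Along an edge a single free block slides between the two nearest values at which it becomes anchored, and these need not be consecutive integers, because the intermediate integral patterns it passes through need not be vertices. Concretely, for $\lambda=(1,2,3)$ the patterns with $(x_{2,1},x_{3,1},x_{3,2})=(1,1,3)$ and $(1,3,3)$ are adjacent vertices (on the open segment between them the only unmarked block is the singleton $\{x_{3,1}\}$), yet this single edge changes both $F_1$ and $F_2$, since $x_{3,1}$ slides from $1$ to $3$ and the intermediate pattern $(1,2,3)$ is not a vertex. This is exactly the phenomenon the paper flags when it notes that one move can alter more than one of the noncrossing paths of a ladder diagram (the path to $t_k$ is the boundary of your $F_k$). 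Consequently your lower bound $d(u,v)\ge\sum_k c_k(u,v)$ is false --- in the example $\sum_k c_k(u,v)=2$ while $d(u,v)=1$ --- and the identity $d(u,v)=\sum_k c_k(u,v)$, on which your entire computation rests, fails. Two further independent problems: the ``elementary fact'' you invoke is wrong in the relevant direction (a plus-shaped connected set and its central cell are two connected sets whose symmetric difference has four components, so the number of components of $D$ does not bound below the number of connected toggles producing $D$); and the cap $c_k(u,v)\le 2$ is also false, e.g.\ for $\lambda=(1^3,2^3)$ the two interleaved staircase vertices have $F_1(u)\triangle F_1(v)$ equal to three pairwise non-adjacent unit cells, giving $c_1=3$ while the diameter is $2$. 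So neither the lower nor the upper half of your argument survives as stated.

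For comparison, the paper's proof is organized precisely around the failure of your one-filter-per-edge assumption. Its lower bound records, for each step of a path in the $1$-skeleton, the \emph{set} $X_k$ of all boundary paths (your filters) that change at that step, explicitly allows $|X_k|>1$, and extracts four combinatorial conditions on the sequence $X_1,\dots,X_\ell$ (each index appears at least twice; last appearances are distinct; changing $j-i+1$ consecutive paths simultaneously requires $j-i$ of them to have been modified earlier; and a refinement of this at last appearances) which together force $\ell\ge 2m-2-\delta_{1,a_1}-\delta_{1,a_m}$ for a carefully chosen pair of zigzag vertices. The upper bound is an explicit two-phase routing of any two vertices through a common intermediate one. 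If you wish to keep the filter language, the statement you would need is an accounting argument of this multi-filter type, not a per-edge, per-filter decomposition.
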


A polytope's face poset (or face lattice) is formed by ordering its faces by inclusion. A combinatorial automorphim of a polytope is an automorphism of its face poset. Our second result is a description of the combinatorial automorphism group of $\GT_\lam$. 

\begin{thm}[$m = 2$ Automorphisms]
Suppose $\lam = (1^{a_1}, 2^{a_2})$ and $a_1,a_2 \ge 2$. 
If $a_1 = a_2 = 2$, then 
\begin{equation*}
\Aut(\GT_\lambda) \cong D_4 \times \Z_2.
\end{equation*}
Otherwise, 
\begin{equation*}
\Aut(\GT_\lambda) \cong D_4 \times \Z_2 \times \Z_2^{\delta_{a_1, a_2}},
\end{equation*}
where $D_4$ is the dihedral group of order 8 and $\Z_2$ is the cyclic group of order 2.
\label{thm:autom-m=2}
\end{thm}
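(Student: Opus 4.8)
The plan is to first replace $\GT_\lam$, for $\lam=(1^{a_1},2^{a_2})$, by a clean combinatorial model. Every entry of a GT pattern for such $\lam$ lies between $1$ and $2$, so after subtracting $1$ and recording which cells are forced and which are free, the interlacing inequalities identify $\GT_\lam$ combinatorially (indeed by an integral affine map) with the order polytope $\mathcal{O}(G)$ of the grid poset $G=[a_1]\times[a_2]$, a product of two chains; in particular $\dim\GT_\lam=a_1a_2$. I would record this using the vertex/edge description underlying Theorem~\ref{thm:diameter}: vertices correspond to order ideals of $G$, and two ideals span an edge iff their symmetric difference is connected in $G$. Because $G$ has a unique minimal and a unique maximal element, the all-$0$ and all-$1$ fillings give two distinguished vertices, each adjacent to every other vertex, and the self-duality $(i,j)\mapsto(a_1{+}1{-}i,a_2{+}1{-}j)$ of $G$ realizes the complementation $x\mapsto\mathbf 1-x$ as an automorphism of $\GT_\lam$ that swaps these two vertices.

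The key structural step is to exploit these two universal vertices. The all-$0$ vertex lies on every facet of $\GT_\lam$ except $x_{\hat1}\le 1$, and the all-$1$ vertex lies on every facet except $x_{\hat0}\ge 0$; hence $\GT_\lam$ is a two-fold pyramid, namely the join $\Delta_1\ast C$ of a segment with its core face $C=\GT_\lam\cap\{x_{\hat0}=0,\ x_{\hat1}=1\}$, which is the order polytope $\mathcal{O}(\bar G)$ of the poset $\bar G$ obtained by deleting the two extreme corners of $G$. Since $\bar G$ has two minimal and two maximal elements, $C$ has no universal vertex and is join-indecomposable, and it is not a point; the join decomposition $\GT_\lam=\mathrm{pt}\ast\mathrm{pt}\ast C$ is therefore canonical, forcing $\Aut(\GT_\lam)\cong \Z_2\times\Aut(C)$, where the $\Z_2$ swaps the two apices and is exactly complementation. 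It then remains to show $\Aut(C)\cong D_4\times\Z_2^{\delta_{a_1,a_2}}$, with the degeneration noted below.

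For the core I would first exhibit the claimed symmetries and then prove maximality. The transpose $(i,j)\mapsto(j,i)$ is a poset automorphism of $G$ precisely when $a_1=a_2$, and this accounts for the factor $\Z_2^{\delta_{a_1,a_2}}$. The $D_4$ is the more subtle, "exotic" part: it acts dihedrally on a canonical $4$-cycle of vertices of $C$ together with the clique of extreme-degree vertices that caps its edges — the same square-with-capped-edges configuration one verifies by hand for $\mathcal{O}([2]\times[3])$, where $\Aut(C)=D_4$. The main obstacle is the upper bound: proving $C$ admits no further combinatorial automorphisms, uniformly in $a_1,a_2$, even though $\dim C=a_1a_2-2$ grows while the symmetry group stays bounded. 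I would handle this by isolating a bounded-size canonical gadget that every automorphism must preserve — for instance the vertices of extreme degree together with the ridge incidences among the cover-relation facets — showing that an automorphism is pinned down by its action on this gadget and that the gadget's automorphism group is exactly $D_4\times\Z_2^{\delta_{a_1,a_2}}$. Establishing this rigidity is where the real work lies.

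Finally I would treat $a_1=a_2=2$ as a special case. There $\bar G$ is a two-element antichain, so $C=\mathcal{O}(\bar G)$ is a square with $\Aut(C)=D_4$, and the transpose symmetry is realized as one of the reflections of that square; thus the would-be extra $\Z_2^{\delta_{a_1,a_2}}$ already lies inside $D_4$ and contributes nothing new. This yields $\Aut(\GT_\lam)\cong D_4\times\Z_2$ in the $a_1=a_2=2$ case and $\Aut(\GT_\lam)\cong D_4\times\Z_2\times\Z_2^{\delta_{a_1,a_2}}$ otherwise, matching the statement. Throughout, the one step I expect to resist a short argument is the uniform rigidity of the core in the third paragraph; everything else is either the combinatorial reduction or the verification that the exhibited maps are genuine automorphisms.
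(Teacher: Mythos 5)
Your reduction is correct and genuinely different from the paper's route: identifying $\GT_{(1^{a_1},2^{a_2})}$ with the order polytope of the grid $[a_1]\times[a_2]$, recognizing the two constant fillings as apices, and peeling off the two-fold pyramid to get $\Aut(\GT_\lambda)\cong\Z_2\times\Aut(C)$ is sound (that $\Z_2$ is exactly the paper's vertex symmetry $\alpha$, which swaps the two vertices adjacent to everything). The paper never invokes the order-polytope structure and instead works with ladder diagrams throughout, so this framing does buy a cleaner isolation of one $\Z_2$ factor. One small unverified point in the reduction: the canonicity of the join decomposition requires that $C$ itself is not a pyramid (has no apex), which you assert from ``two minimal and two maximal elements'' but do not check; it is true and easy, but a universal vertex of the $1$-skeleton and an apex are not the same thing, so this deserves a line.

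The genuine gap is that everything after the reduction --- which is essentially the entire content of the theorem --- is missing. First, you never actually construct the $D_4$ inside $\Aut(C)$: ``acts dihedrally on a canonical $4$-cycle of vertices together with the clique of extreme-degree vertices that caps its edges'' does not define a map on the face lattice, and the nontrivial point is precisely that these symmetries exist uniformly for all $a_1,a_2\ge 2$. The paper produces them as explicit affine involutions $\mu$, $\mu_1$, $\tau$ (local coordinate reflections at the two corners, and the $180^\circ$ rotation) and verifies the relation $\mu\tau=\tau\mu_1$ that makes the group dihedral rather than elementary abelian; nothing in your sketch recovers that relation. Second, and more seriously, the upper bound $\lvert\Aut(C)\rvert\le 8\cdot 2^{\delta_{a_1,a_2}}$ is exactly the rigidity you defer with ``this is where the real work lies.'' Naming a strategy (``isolate a bounded-size canonical gadget'') is not a proof: you give no argument that the gadget is preserved by every combinatorial automorphism, that its stabilizer acts trivially on the rest of the face lattice, or that its own automorphism group has the claimed order. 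The paper does this work by encoding facets as interior edges of the ladder grid, grouping them into dependency chains, proving that any automorphism permutes chains preserving length, adjacency, and orientation, and then running an orbit--stabilizer count; some version of that analysis is unavoidable and is absent from your proposal.
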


\begin{thm}[$m \ge 3$ Automorphisms]
Suppose $\lam = (1^{a_1},\ldots, m^{a_m})$ and $m \ge 3$. Let $r_1$ be the number of $k$ such that $a_k, a_{k+1} \geq 2$. Let $r_2 = 1$ if $\lam = \lam'$ and let $r_2 = 0$ otherwise. Then 
\begin{equation*}
\Aut(\GT_\lam) \cong (S_{a_2}^{\delta_{1,a_1}} \times S_{a_{m-1}}^{\delta_{1,a_{m}}} \times \Z_2^{r_1+1}) \ltimes_\varphi \Z_2^{r_2}
\end{equation*}
where if $r_2 = 1$, then $\varphi: \Z_2 \to \Aut(S_{a_2}^{\delta_{1,a_1}} \times S_{a_{m-1}}^{\delta_{1,a_{m}}} \times \Z_2^{r_1+1})$ sends the nonidentity element of $\Z_2$ to the map sending $(\sigma_1, \sigma_2, z_1, \ldots, z_{r_1}, z_{r_1+1}) \mapsto (\sigma_2, \sigma_1, z_{r_1}, \ldots, z_1, z_{r_1+1}).$
\label{thm:autom-m>=3}
\end{thm}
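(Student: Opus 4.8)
The plan is to exhibit an explicit subgroup $G \le \Aut(\GT_\lam)$ isomorphic to the claimed group and then prove that $G$ exhausts $\Aut(\GT_\lam)$. Since a combinatorial automorphism is by definition an automorphism of the face lattice, it is determined by its action on the coatoms (facets), so throughout I would work with the facet description of $\GT_\lam$ developed earlier: each facet corresponds to a non-redundant tight inequality in the array of Figure~\ref{fig:ineqdiagram}, and I would first record exactly which horizontal and vertical inequalities are facet-defining once $\lam = (1^{a_1},\ldots,m^{a_m})$ is fixed. Grouping the rows into $m$ blocks of equal $\lam$-values, the geometrically meaningful coordinates and facets concentrate along the boundaries between consecutive blocks, and the condition $a_k, a_{k+1} \ge 2$ counted by $r_1$ is precisely the condition for a block boundary to carry a local degree of freedom.

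To construct $G$ I would produce the generators one family at a time and check how they interact. First, when $a_1 = 1$ the top of the pattern is a single marked value, which makes the $a_2$ facets of the adjacent block freely interchangeable and yields a factor $S_{a_2}$; symmetrically $S_{a_{m-1}}$ appears when $a_m = 1$, accounting for $S_{a_2}^{\delta_{1,a_1}} \times S_{a_{m-1}}^{\delta_{1,a_m}}$. Second, each index $k$ with $a_k, a_{k+1}\ge 2$ supports a local reflection exchanging the two sides of that boundary and fixing everything else; these $r_1$ involutions commute with each other and with the symmetric-group factors, and together with one further reflection present for every $\lam$ they give $\Z_2^{r_1+1}$, so that families one and two assemble into the stated direct product. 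Third, when $\lam = \lam'$ the reverse symmetry of the whole array is an automorphism $\tau$ of order $2$; conjugation by $\tau$ swaps the top-block symmetry with the bottom-block symmetry ($\sigma_1 \leftrightarrow \sigma_2$) and reverses the order of the boundary reflections ($z_i \leftrightarrow z_{r_1+1-i}$) while fixing the global reflection $z_{r_1+1}$. Because $\tau$ normalizes but does not centralize the other generators, this produces the semidirect product $(S_{a_2}^{\delta_{1,a_1}} \times S_{a_{m-1}}^{\delta_{1,a_m}} \times \Z_2^{r_1+1}) \ltimes_\varphi \Z_2^{r_2}$ with $\varphi$ exactly as written.

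Proving $G = \Aut(\GT_\lam)$ is the crux. I would argue that any automorphism preserves enough combinatorial invariants to be pinned down to $G$. Concretely I would (a) show that the type of a facet — its combinatorial isomorphism class as a polytope together with the multiset of types of the ridges in which it meets neighboring facets — is an automorphism invariant, and classify facets into the finitely many types arising from the block structure; (b) show any automorphism preserves the block decomposition up to the reverse symmetry, reducing to automorphisms fixing each block setwise; and (c) show that within a fixed block the only freedom is the interchange of parallel facets (the $S_{a_i}$ action when a neighboring block has size one) and the local reflections, with no further diagonal or long-range identifications possible once $m \ge 3$. The hypothesis $m \ge 3$ is essential: it is what rules out the accidental coincidences that in the $m = 2$ case inflate the group to contain a $D_4$ as in Theorem~\ref{thm:autom-m=2}, since for $m \ge 3$ there are always at least three distinct blocks available to anchor the invariants and break any would-be extra symmetry.

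The hardest technical step will be part (c): ruling out exotic automorphisms that permute facets within or across blocks in ways not realized by $G$. I expect this to require a hands-on analysis of the small configurations of facets and ridges around each block boundary, essentially verifying that the local incidence data admits no symmetries beyond the reflection and the forced parallel-facet permutations, and I would lean on the explicit diameter computation of Theorem~\ref{thm:diameter} and the vertex-edge structure as auxiliary invariants to cut down the cases. Once the induced action on facets is shown to lie in $G$, faithfulness of this action completes the identification $\Aut(\GT_\lam) \cong G$.
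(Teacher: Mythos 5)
Your construction of the generating subgroup matches the paper's: the $S_{a_2}$ and $S_{a_{m-1}}$ factors when $a_1=1$ or $a_m=1$, the $r_1$ local reflections at block boundaries with $a_k,a_{k+1}\ge 2$, one extra reflection at the origin corner, and the reverse-symmetry flip with exactly the conjugation action $\varphi$ you describe. That half is sound. The gap is in the other half, which you yourself flag as the crux and then leave as a plan rather than an argument. Your proposed invariant in step (a) --- the combinatorial type of a facet together with the multiset of ridge types --- is a purely local datum, and it is not established (and is doubtful) that it suffices to pin an arbitrary automorphism down to $G$; the paper needs a genuinely global structure to do this. Concretely, the paper partitions the facets into \emph{chains} via the dependency relation (two facets whose intersection has dimension $d-3$), shows every automorphism permutes chains preserving length and adjacency, and organizes the chains of length $\ge 2$ into a rooted planar tree $\cal{G}_\lam$. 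The upper bound then comes in three steps, none of which your proposal supplies a substitute for: (i) the short chains near the origin, the terminal vertices, and the fixed triangular subgrids form an ordered sequence that any automorphism must preserve or globally reverse (proved via explicit compatibility relations: which pairs of facets lie on a common vertex); (ii) a global reversal forces $\lam=\lam'$, proved by computing two invariants --- the depth of each type A chain in $\cal{G}_\lam$, which equals $\sum_i a_i - a_k$, and a separation distance between the extreme length-$1$ chains and their nearest type A chains, which equals $a_1+a_2-2$ and $a_{m-1}+a_m-2$; and (iii) the stabilizer of all these short chains is trivial, proved by propagating orientation up the tree (adjacent chains meeting at two or more points must have the same orientation, plus a separate argument for the $a_1=2$ or $a_m=2$ boundary cases). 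Your step (b) asserts the conclusion of (i) and (ii) without proof, and your step (c) asserts the conclusion of (iii) while conceding you would need a ``hands-on analysis'' you have not done. The suggestion to use the diameter formula of Theorem~\ref{thm:diameter} as an auxiliary invariant does not appear to lead anywhere and is not used in the paper. As written, the proposal establishes only the containment $G\subseteq\Aut(\GT_\lam)$, not the equality.
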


\begin{example}
Figure~\ref{fig:gt123} shows the Gelfand-Tsetlin polytope $\GT_{(1,2,3)}$. It is clear that the diameter of $\GT_{(1,2,3)}$ is 2 and the automorphism group is $\Z_2\times\Z_2$. One generator is given by rotating 180$^\circ$ about the axis containing vertex $4$ and vertex $7$. The other generator is given by fixing vertex $1$ and interchanging vertex $2$ with vertex $3$, vertex $5$ with vertex $6$, and vertex $4$ with vertex $7$. These results are predicted by our theorems.
\begin{figure}[h!]
\includegraphics[scale=0.3]{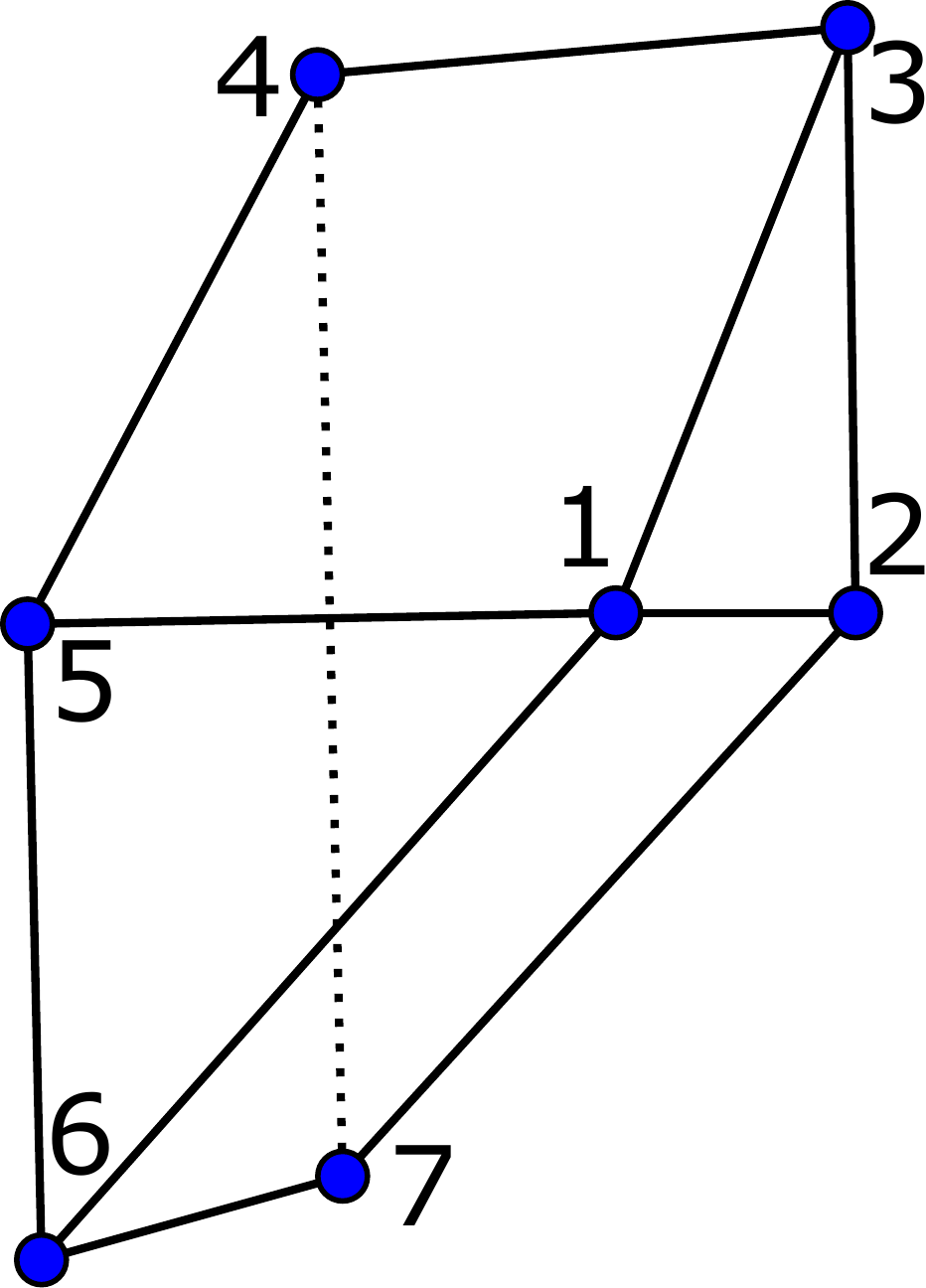}
\caption{Polytope $\GT_{(1,2,3)}$.}
\label{fig:gt123}
\end{figure}
\end{example}

In Section~\ref{sec:preliminaries}, we review background on GT polytopes including how to model their face poset using combinatorial objects called \emph{ladder diagrams}. Then we present a proof of Theorem~\ref{thm:diameter} in Section~\ref{sec:comb-diameter}. In Section~\ref{sec:comb-automorphisms}, we identify the generators of $\Aut(\GT_\lam)$ and present proofs of Theorems~\ref{thm:autom-m=2} and \ref{thm:autom-m>=3}.

\section{Preliminaries}\label{sec:preliminaries}
In this section, we formally define GT polytopes and describe how to view their faces as combinatorial objects called $\emph{ladder diagram}$. These diagrams are the primary objects we will use to model the face lattice of $\GT_\lam$.

\subsection{Gelfand-Tsetlin Polytopes}
A partition of $s$ is a sequence of weakly increasing positive integers $\lambda = (\lambda_1 \leq \lambda_2 \leq \ldots \leq \lambda_n)$ such that $\sum_{i=1}^n \lambda_i = s$. We will often use multiplicative notation for $\lambda$ and write $\lambda=(\lam_1^{a_1}, \lam_2^{a_2}, \ldots, \lam_m^{a_m})$ for $a_1, \ldots, a_m \in \mathbb{Z}_{\ge 0}$ to denote a partition with $a_1$ copies of $\lam_1$, $a_2$ copies of $\lam_2$, and so forth. We may omit writing the term $\lam_i^{a_i}$ if $a_i = 0$. 

\begin{defn}[GT Polytope]\label{def:GTpoly}
Given a partition $\lambda = (\lambda_1, \ldots, \lambda_n)$, the Gelfand-Tsetlin Polytope $\GT_\lambda$ is the set of points $\vec{x} = (x_{i, j})_{1 \leq j \leq i \leq n} \in \mathbb{R}^{n(n+1)/2}$ such that $x_{i,i}=\lambda_i$ for $1 \le i \le n$ and such that the following inequalities are satisfied:
\begin{enumerate}
\item $x_{i-1,j}\leq x_{i,j}\leq x_{i+1,j}$,
\item $x_{i,j-1}\leq x_{i,j}\leq x_{i,j+1}$.
\end{enumerate}

Suppose for some $i < j$ that $\lambda_i = \lambda_j$. Then for every $i\leq i',j'\leq j$, we are forced to have $x_{i', j'} = \lambda_i$. Whenever such a situation occurs, we say that the coordinate $x_{i', j'}$ is \emph{fixed}. In general, $\GT_\lam$ will be a polytope in $\mathbb{R}^d$ where $d$ is at most $n(n-1)/2$.

These constraints can be visualized in a triangular array as shown in Figure~\ref{fig:ineqdiagram}. 
\end{defn}

We adopt the following notational conventions:
\begin{itemize}
\item Let $n$ denote the length of $\lambda$. We specify a partition as $\lam = (\lam_1,\ldots,\lam_n)$. We typically only consider partitions of the form $\lam = (1^{a_1}, \ldots, m^{a_m})$ where $a_i > 0$ for all $1 \le i \le m$.
\item Let $m$ denote the number of distinct values of $\lambda$. This agrees with our notation for partitions of the form $\lam = (1^{a_1}, \ldots, m^{a_m})$ where $a_i > 0$ for all $1 \le i \le m$.
\item Let $d$ denote the dimension of $\GT_\lambda$. Note that $d = {n \choose 2} - \sum\limits_{i=1}^m{a_i \choose 2}$.
\item Let $\mathcal{F}(\GT_\lam)$ denote the face poset of $\GT_\lambda$ ordered by inclusion.
\item Let $\I_n=\{(i,j):1\leq j\leq i\leq n\}$ denote the triangular grid with shape shown in Figure~ \ref{fig:ineqdiagram}.
\end{itemize}

\subsection{Ladder Diagrams}\label{subsec:ladder-diagrams}
For every $\lam$, we define a graph $\gamlam$ such that faces of $\GT_\lam$ correspond to subgraphs of $\gamlam$ with certain restrictions. These subgraphs are the ladder diagrams introduced in \cite{ACK}.

Let $Q$ be the infinite graph corresponding to first quadrant of the Cartesian plane, i.e. let $Q$ have vertices $(i,j)$ for all $i,j \ge 0$ and edges $\{(i,j),(i+1,j)\}$ and $\{(i,j),(i,j+1)\}$.
For the sake of convenience, define $a_0 := 0$ and $s_j := \sum_{i=0}^j a_i$ for $0 \le j \le m$.

\begin{defn}[$\gamlam$ and Ladder Diagrams]
For $\lambda = (1^{a_1},\ldots,m^{a_m})$, the grid $\gamlam$ is an induced subgraph of $Q$ constructed as follows. We identify the vertex at $(0,0)$ as the \emph{origin} and the vertices $t_j = (s_j, n - s_j)$ for $0 \le j \le m$ as \emph{terminal vertices}. $\gamlam$ consists of all vertices and edges appearing on any North-East path between the origin and a terminal vertex.

A \emph{ladder diagram} is a subgraph of $\gamlam$ such that
\begin{enumerate}
\item the origin is connected to every terminal vertex by some North-East path.
\item every edge in the graph is on a North-East path from the origin to some terminal vertex.
\end{enumerate}
\end{defn}

An example of the grid $\gamlam$ and some of its ladder diagrams are shown in Figure~\ref{fig:ladder-diag}. All fixed coordinates are shaded. The terminal vertices lie along the \emph{main diagonal} of $\gamlam$. As standalone objects, the ladder diagrams of $\gamlam$ form a poset ordered by inclusion: given two ladder diagrams $\cal{L}_1, \cal{L}_2$, we have $\cal{L}_1 \le \cal{L}_2$ if $\cal{L}_1$ is a subgraph of $\cal{L}_2$.  

\begin{figure} [htp] 
\includegraphics[scale=0.12]{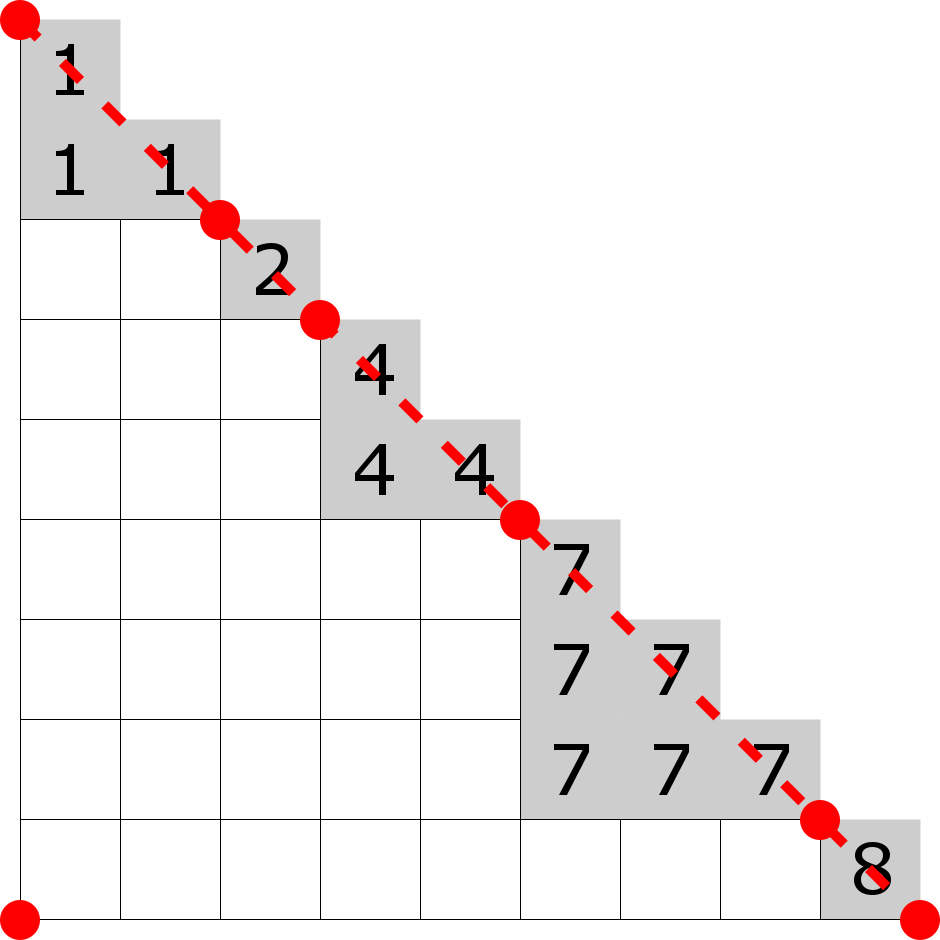}
\includegraphics[scale=0.12]{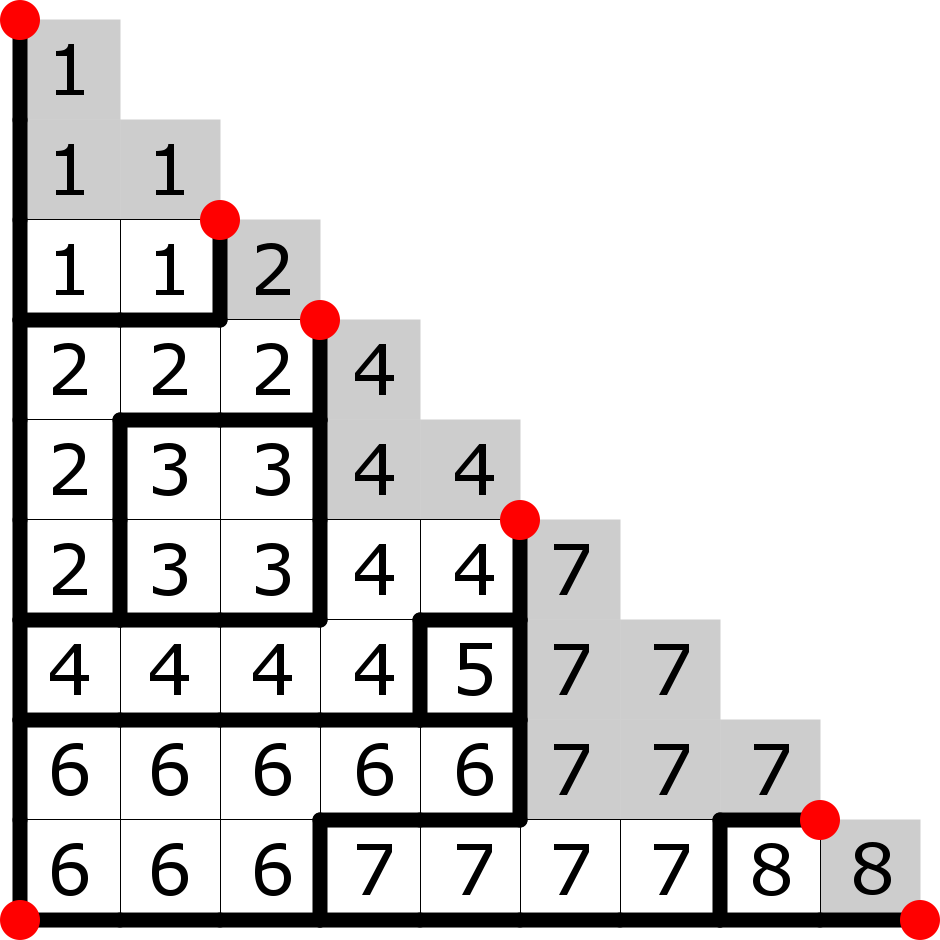}
\includegraphics[scale=0.12]{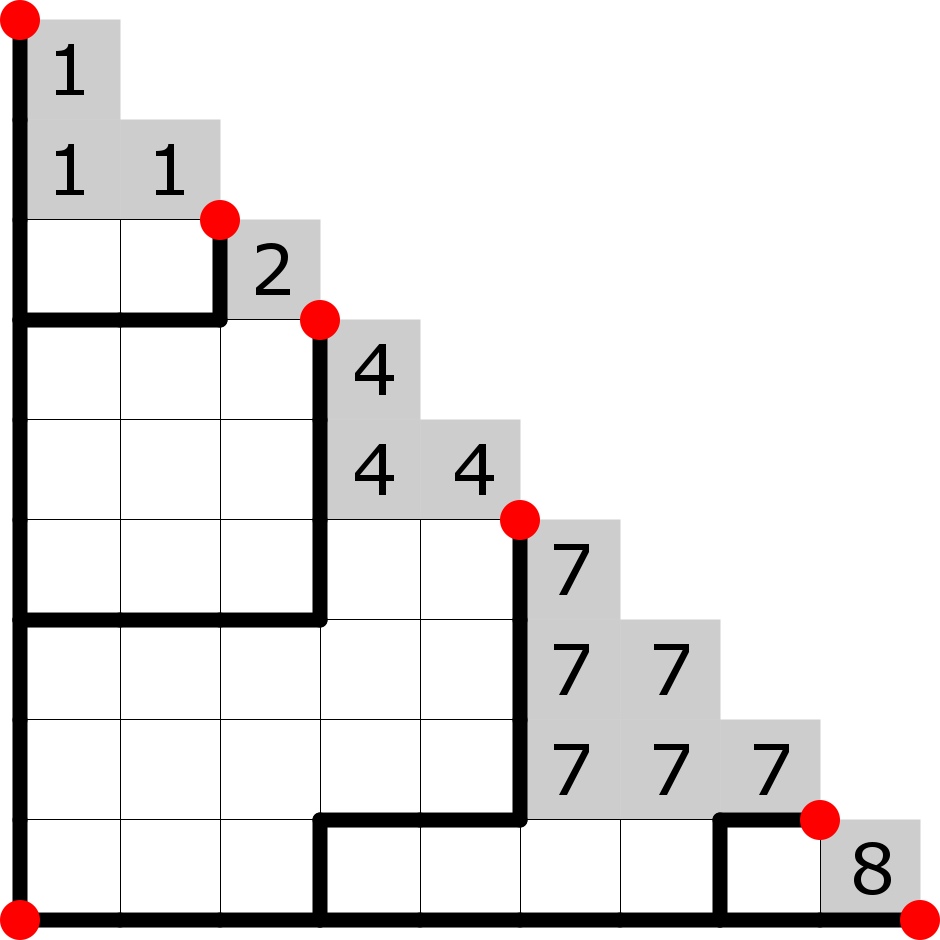}
\includegraphics[scale=0.12]{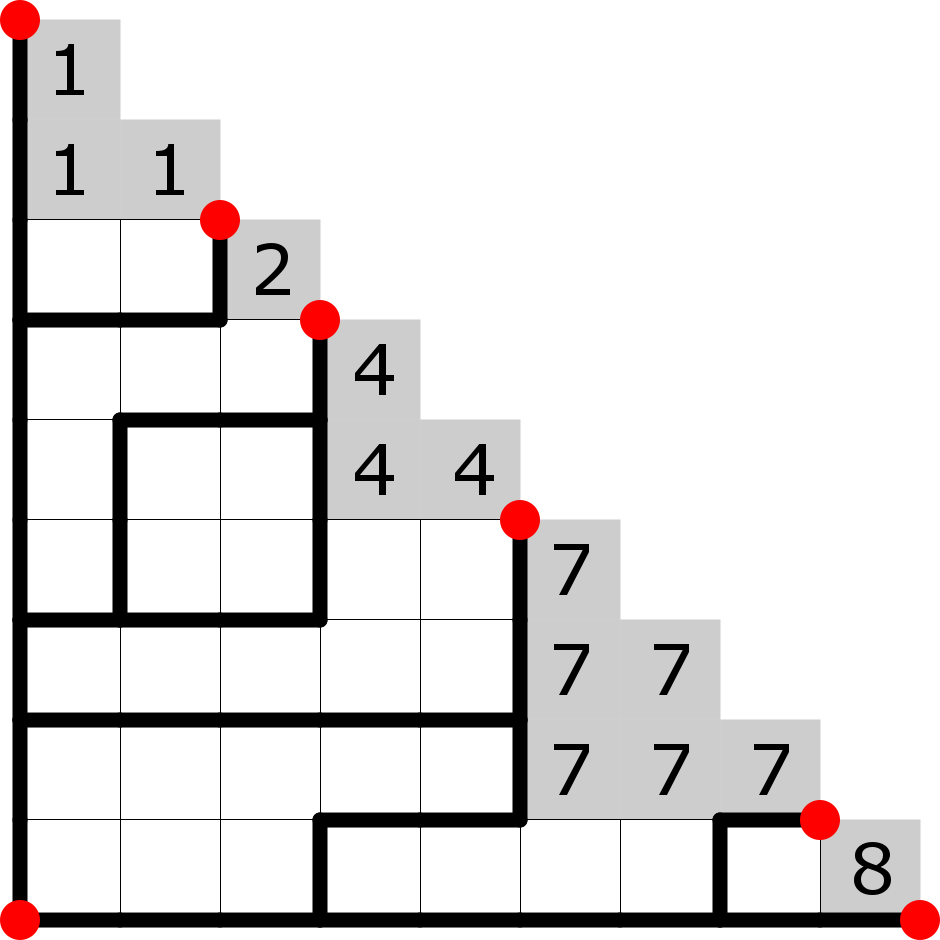}
\caption{Let $\lam = (1^2,2^1,4^2,7^3,8^1)$. From left to right: $\gamlam$ with origin and terminal vertices as dots and a dashed line indicating the main diagonal, ladder diagram for a point in $\GT_\lam$, ladder diagram for a $0$-dimensional face (vertex), and ladder diagram for a $2$-dimensional face.}
\label{fig:ladder-diag}
\end{figure}

\begin{defn}[Face Lattice of Ladder Diagrams]
Let $\cal{F}(\gamlam)$ denote the set of all ladder diagrams of $\gamlam$ ordered by inclusion. This may also be called the face lattice of $\gamlam$.
\end{defn}

\begin{thm}\cite[Theorem 1.9]{ACK}\label{thm:ladders}
$\mathcal{F}(\GT_\lambda) \cong \mathcal{F}(\gamlam)$.
\end{thm}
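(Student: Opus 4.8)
The statement is cited from \cite{ACK}, but here is the approach I would take to prove it. The plan is to construct an explicit order-preserving bijection between the faces of $\GT_\lam$ and the ladder diagrams of $\gamlam$, exploiting the standard correspondence between GT patterns and chains of interlacing partitions.

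First I would describe each face of $\GT_\lam$ by its pattern of tight inequalities. A face $F$ is cut out by promoting some of the defining inequalities (1)--(2) of Definition~\ref{def:GTpoly} to equalities, and the relative interior of $F$ consists of the points satisfying exactly those equalities while keeping all remaining inequalities strict. Since the constraints only compare poset-adjacent entries, the tight inequalities on $F$ are equivalent to a partition of the index set $\I_n$ into connected blocks on which the coordinates $x_{i,j}$ are forced equal, subject to the requirement that adjacent blocks carry strictly different values and that the fixed diagonal entries $x_{i,i} = \lambda_i$ are respected. Hence $\mathcal{F}(\GT_\lam)$ is isomorphic to the poset of such admissible block partitions, ordered so that smaller faces correspond to coarser partitions.

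Next I would translate block partitions into subgraphs of $\gamlam$. Reading the GT pattern row by row as a chain of interlacing partitions $\varnothing \subseteq \mu^{(1)} \subseteq \cdots$, each partition's Young-diagram boundary is a North-East path, and the terminal vertices $t_j = (s_j, n - s_j)$ record exactly where the value $j$ first appears. To a face $F$ I associate the subgraph of $\gamlam$ obtained by overlaying the boundary paths determined by its level sets, including an edge of $\gamlam$ precisely when the corresponding inequality is permitted to be strict on $F$. The main work is then to verify that the image of this map is exactly the set of ladder diagrams. Condition (1)---that the origin is joined to every terminal vertex by a North-East path---should encode feasibility, namely that $F$ is nonempty and compatible with the forced equalities among the entries $\lambda_1 \le \cdots \le \lambda_n$. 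Condition (2)---that every edge lies on such a path---should encode non-redundancy: an edge lying on no origin-to-terminal path would correspond to an inequality that is automatically tight given the others, so its deletion would change no face. Conversely, from a ladder diagram I would recover the block partition by reading which inequalities its absent edges force to be equalities, check that this partition is admissible, and conclude it determines a genuine face. Because passing to a subface (promoting more inequalities to equalities) deletes edges while enlarging a face adds them, the bijection is covariant, yielding the isomorphism $\mathcal{F}(\GT_\lam) \cong \mathcal{F}(\gamlam)$.

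I expect the main obstacle to be the precise matching of the two conditions defining ladder diagrams with feasibility and minimality of the corresponding face. In particular, proving surjectivity---that every ladder diagram arises from a \emph{nonempty} face---requires exhibiting an actual point of $\GT_\lam$ realizing exactly the prescribed equalities and no others, which reduces to a careful dimension count: one must confirm that the number of independent strict inequalities (equivalently, the edges present in the diagram) matches the dimension of the face, consistent with $d = \binom{n}{2} - \sum_{i} \binom{a_i}{2}$, and that no unintended coincidences among coordinates are forced by the forced diagonal entries.
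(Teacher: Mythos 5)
Your proposal follows essentially the same route as the paper, whose proof is only a one-line sketch deferring to \cite{ACK}: map a point of $\GT_\lam$ to the diagram obtained by drawing boundaries around adjacent groups of equal coordinates, and your elaboration via tight-inequality patterns, level-set boundaries, and the interpretation of the two ladder-diagram conditions is consistent with that. One small correction to your final paragraph: the dimension of a face equals the number of \emph{bounded regions} of its ladder diagram, not the number of edges present (nor the number of independent strict inequalities), so the surjectivity and dimension check should be phrased in terms of bounded regions.
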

\begin{proof}
An isomorphism is given by taking a point in $\GT_\lambda$ and drawing lines around adjacent groups of $x_{i, j}$ with equal value will produce a face of $\Gamma_a$. For more details, see \cite{ACK}. We also mention that \cite{McAllister} proves an analogous relation between $\cal{F}(\GT_\lambda)$ and the poset of GT tilings, which is essentially equivalent to $\cal{F}(\gamlam)$.
\end{proof}

Note that $\mathcal{F}(\gamlam)$ is graded by the number of bounded regions where $k$-dimensional faces correspond to ladder diagrams with $k$ bounded regions. In fact, given a point $x \in \GT_\lam$, we can determine the dimension of the minimal face containing $x$ by mapping $x$ to its corresponding ladder diagram. Then the number of bounded regions will be the dimension of this minimal face.

\begin{remark}
Note that the poset $\mathcal{F}(\gamlam)$ only depends on the multiplicities $a_i$ and not on the values of $\lambda_i$. So when examining the purely combinatorial properties of $\GT_\lambda$, it suffices to consider partitions of the form $\lambda = (1^{a_1}, 2^{a_2}, \ldots, m^{a_m})$.
\label{rmk:ai}
\end{remark}

\section{Combinatorial Diameter}\label{sec:comb-diameter}
In this section, we present an exact formula for the diameter of the 1-skeleton of $\GT_\lambda$ which is denoted by $\diam(\GT_\lambda$). As explained in Remark~\ref{rmk:ai}, it suffices to consider $\lambda = (1^{a_1}, \ldots, m^{a_m})$ where $a_1, \ldots ,a_m \in \mathbb{Z}_{>0}$. Our proofs will indirectly work with vertices and edges of $\GT_\lam$ by working with their corresponding ladder diagrams.

In order to study the diameters of the 1-skeleton of $\GT_\lambda$, we need to first understand what a vertex is in terms of ladder diagrams and under what conditions two vertices are connected.

\begin{defn}
Two paths in a ladder diagram from the origin to terminal vertices are \emph{noncrossing} if they do not meet again after their first separation.
\end{defn}

In particular, vertices of $\GT_\lam$ have ladder diagrams consisting of $m+1$ noncrossing paths. Two vertices are connected by an edge if the union of their ladder diagrams has exactly one bounded region, as shown in Figure~\ref{fig:adj-vertices}. We think of traveling from one vertex to another as moving a \emph{subpath} of the first vertex's ladder diagram. Note that such a move can alter more than one of the $m+1$ noncrossing paths.

\begin{figure} [htp]
\includegraphics[scale=0.10]{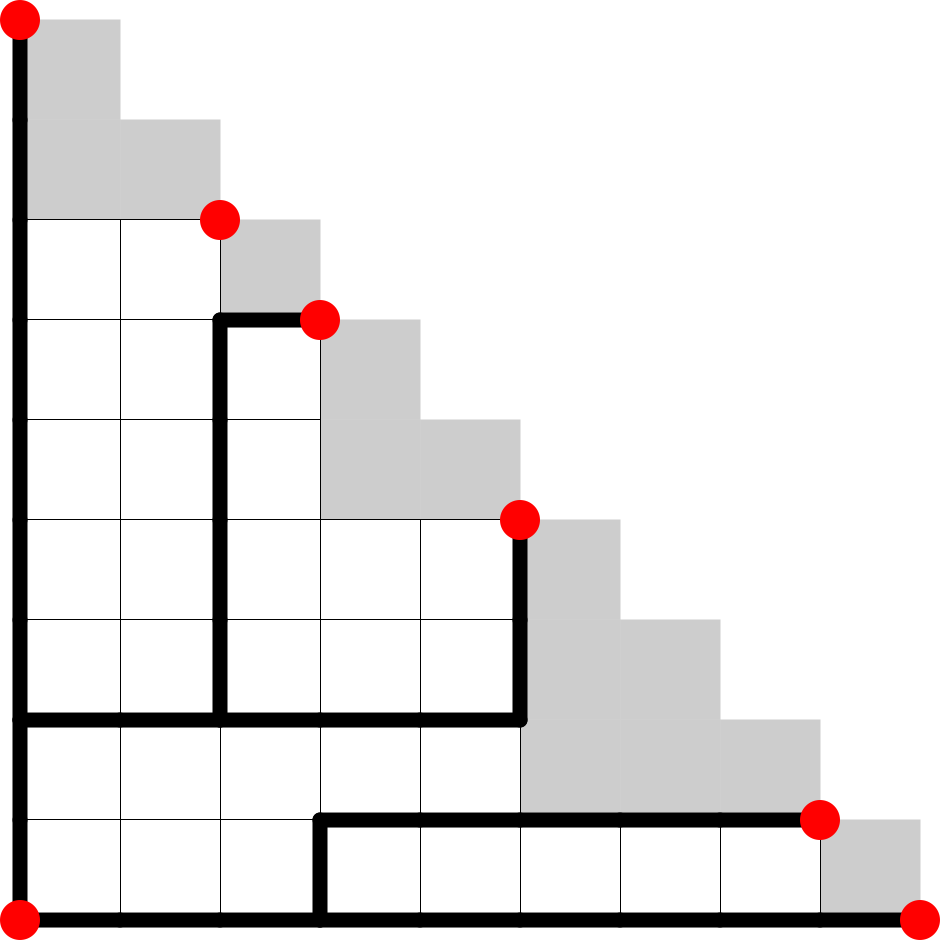}
\qquad
\includegraphics[scale=0.10]{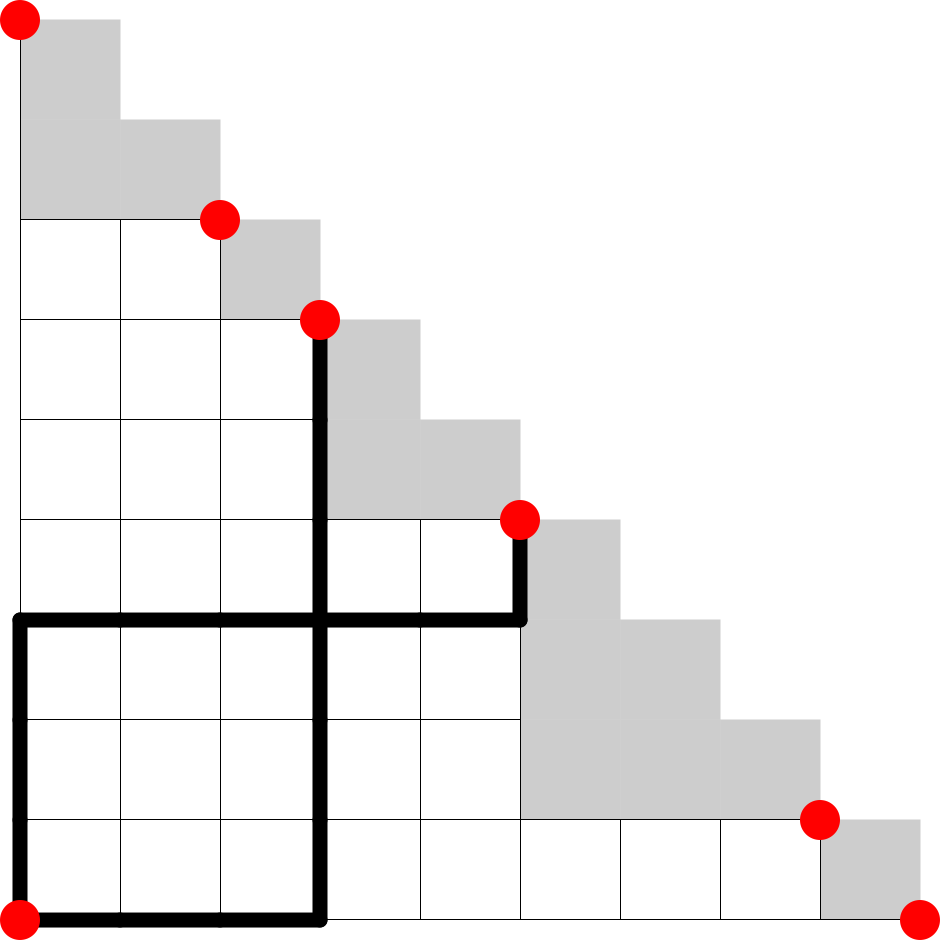}
\qquad
\includegraphics[scale=0.10]{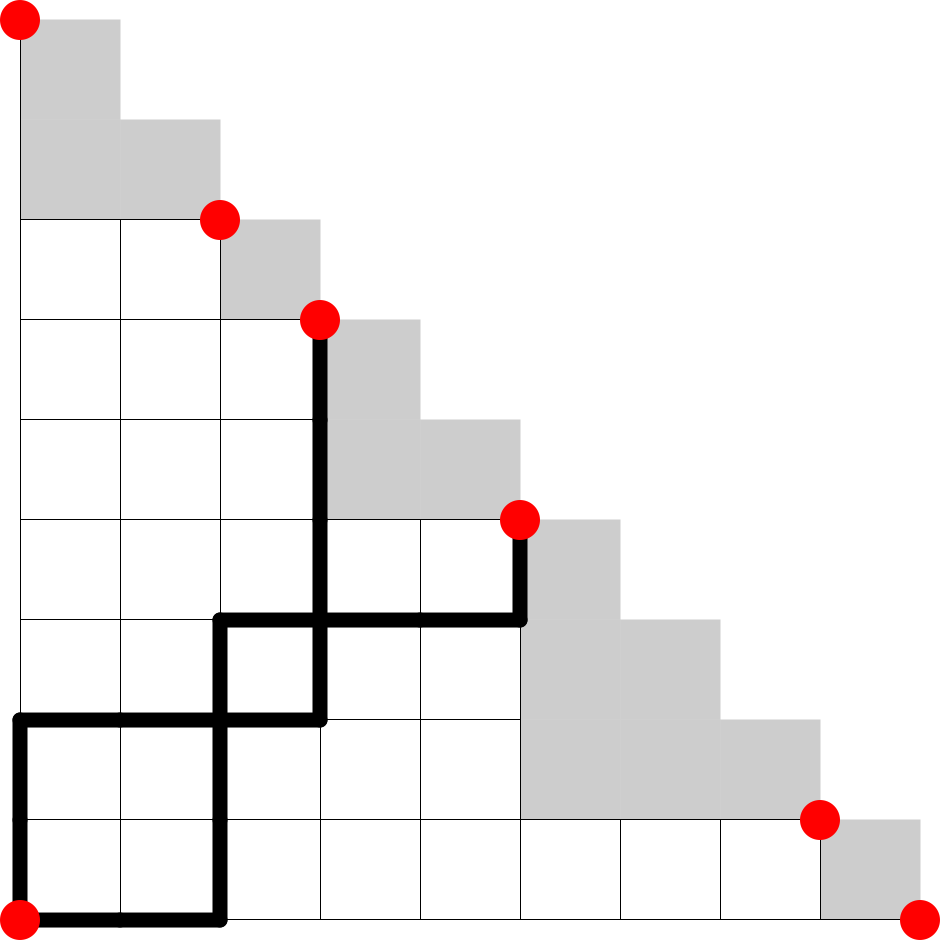}
\caption{Left: 5 non-crossing paths. Middle and Right: 2 Crossing paths}
\label{fig:intersecting-paths}
\end{figure}

\begin{figure} [htp]
\includegraphics[scale = 0.10]{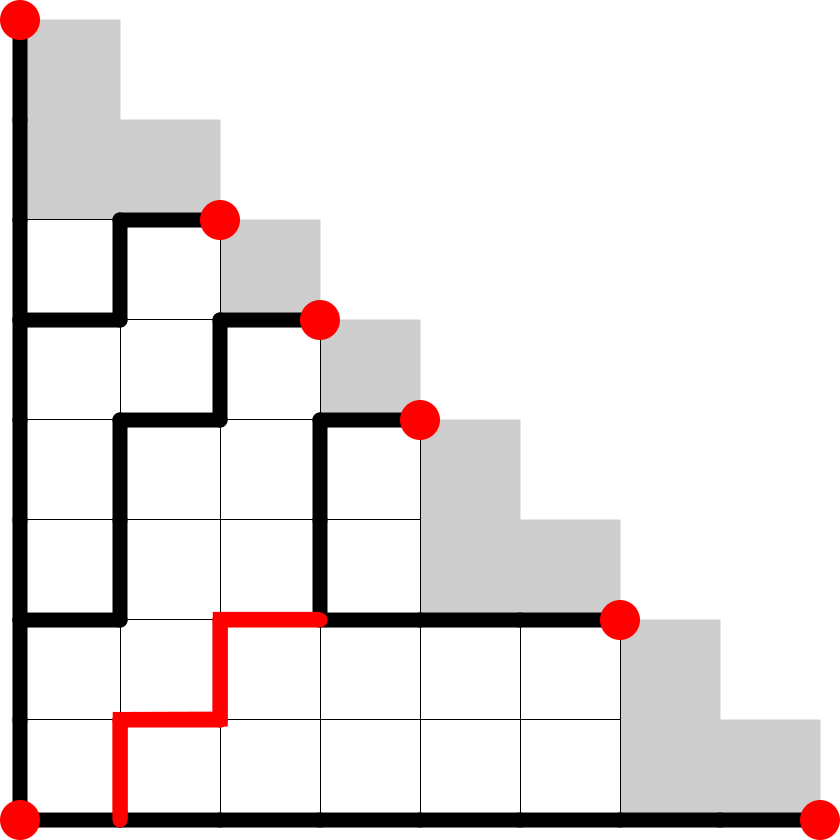}
\qquad
\includegraphics[scale = 0.10]{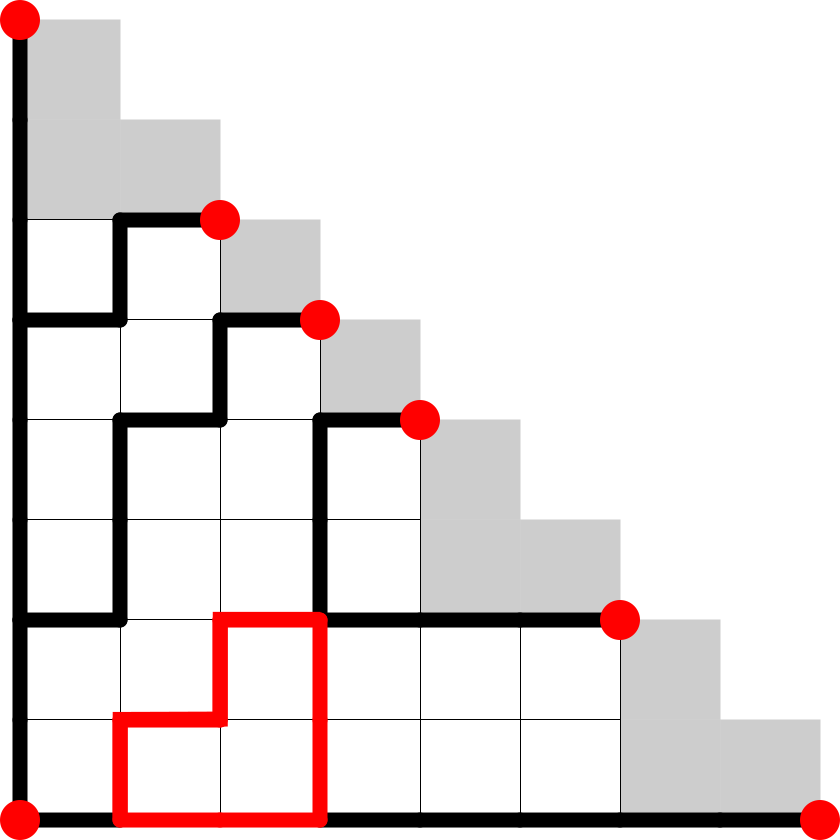}
\qquad
\includegraphics[scale = 0.10]{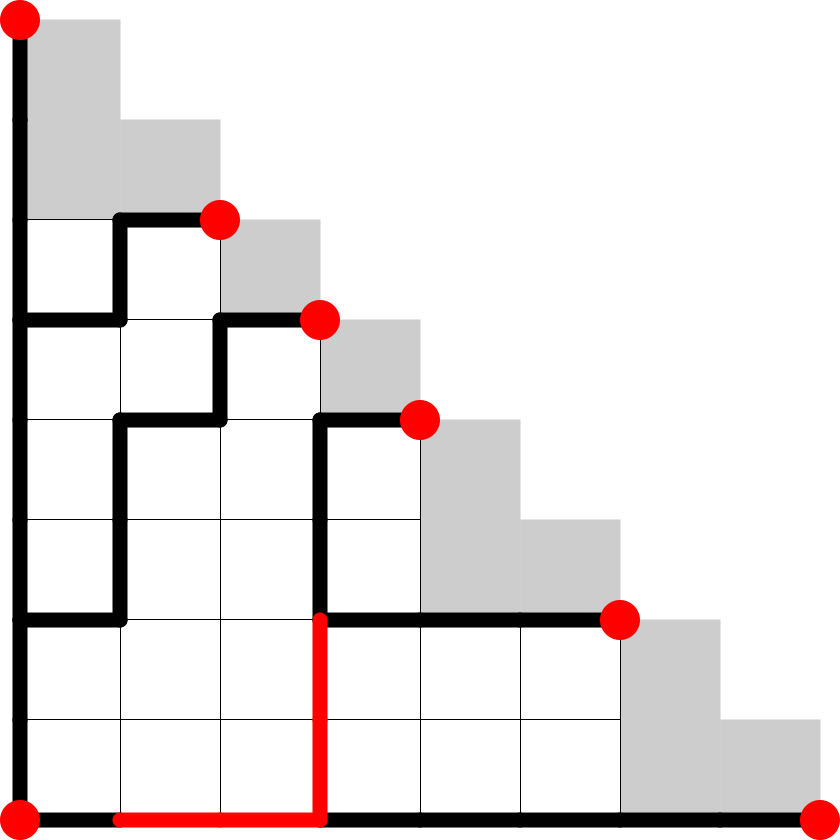}
\caption{Two vertices and the edge between them.}
\label{fig:adj-vertices}
\end{figure}

\begin{lem}\label{lem:diamUB}
Any two vertices $v$ and $w$ of $\GT_\lam$ are separated by at most $2m - 2- \delta_{1, a_1} - \delta_{1, a_m}$ edges.
\end{lem}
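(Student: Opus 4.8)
The plan is to work entirely on the combinatorial side via Theorem~\ref{thm:ladders}, representing each vertex as a system of $m+1$ pairwise noncrossing lattice paths $P_0,\dots,P_m$ in $\gamlam$, where $P_j$ runs from the origin to the terminal vertex $t_j$. The two outermost paths are forced, since $P_0$ is the vertical segment to $t_0=(0,n)$ and $P_m$ is the horizontal segment to $t_m=(n,0)$; hence a vertex is determined by its $m-1$ interior paths $P_1,\dots,P_{m-1}$, nested in order between $P_0$ (upper-left) and $P_m$ (lower-right). I would fix once and for all the canonical ``upper-left'' vertex $v^\ast$ whose interior paths are pushed as far toward the upper-left as the nesting permits: $P_1$ is the boundary-hugging staircase $U_1$ (all north steps, then all east steps), and each $P_j$ hugs the already-canonical $P_{j-1}$ from below-right. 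The bound is then obtained by routing an arbitrary pair $v,w$ through $v^\ast$.

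The central claim is that $d(v,v^\ast)\le m-1$ for every vertex $v$. I would prove this by pushing the interior paths to their canonical positions one at a time in the order $j=1,2,\dots,m-1$. When it is $P_j$'s turn, the paths $P_1,\dots,P_{j-1}$ already sit tightly nested along the upper-left, while $P_{j+1},\dots,P_m$ lie weakly to the lower-right of the current $P_j$; I replace $P_j$ by the path $P_j'$ hugging the canonical $P_{j-1}$. The region swept between $P_j$ and $P_j'$ is bounded above-left by $P_j'$ and below-right by the old $P_j$, and every remaining path lies weakly above-left of $P_j'$ or weakly below-right of the old $P_j$, so this region is a single undivided bounded region. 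Thus each replacement is a single edge of $\GT_\lam$ that keeps the diagram a valid noncrossing vertex. Reversing these moves shows $d(v^\ast,w)\le m-1$ by an outside-in pass ($j=m-1,\dots,1$), where the move fixing $P_j$ to $P_j^w$ is legitimate because its inner neighbor is the extreme canonical $P_{j-1}$ and its outer neighbor $P_{j+1}^w$ is already in place and noncrossing with $P_j^w$. The triangle inequality then gives $d(v,w)\le 2m-2$, which already matches the claimed bound whenever $a_1,a_m\ge 2$.

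It remains to save one edge at each narrow end. When $a_1=1$, the terminal vertex $t_1=(1,n-1)$ is adjacent to the vertical axis, so $P_1$ lives entirely in the width-one strip $0\le x\le 1$ and is determined by the single height at which it steps east; consequently any two admissible positions of $P_1$ co-bound a single rectangular region inside this strip, so the transition $P_1^v\to P_1^w$ is a single edge. The routing above spends two moves on $P_1$ (the first move $P_1^v\to U_1$ of the inside-out pass and the last move $U_1\to P_1^w$ of the outside-in pass), and I would replace these two moves by the single direct move, removing one edge. The symmetric argument in the width-one strip $0\le y\le 1$ handles $P_{m-1}$ when $a_m=1$ and removes another edge, yielding the bound $2m-2-\delta_{1,a_1}-\delta_{1,a_m}$.

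The main obstacle is exactly this last refinement: because a single edge-move may alter several of the noncrossing paths at once, I must schedule the moves so that the direct transition inside the narrow strip is genuinely a single-bounded-region flip and never forces $P_1^w$ (or $P_{m-1}^w$) to cross the not-yet-fixed neighbor $P_2$ (or $P_{m-2}$). Verifying that the neighboring path can always be positioned to stay noncrossing with both endpoints of the merged move, and confirming that the swept region is never subdivided by an interior path, is the delicate step; by contrast the canonical pushes of the second paragraph are routine once the single-region property of each individual push is checked. One should finally treat the degenerate small cases separately, in particular $m=2$ with $a_1=a_2=1$, where the two narrow-end savings would otherwise be applied to the same interior path.
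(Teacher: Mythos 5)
Your strategy coincides with the paper's: route both $v$ and $w$ through the extreme ``all-north-then-all-east'' vertex (your $v^\ast$ is exactly the target vertex $u$ of the paper's Phase~2), spending one edge per interior path in each direction for a total of $2m-2$, and then save one edge at each end with $a_1=1$ or $a_m=1$ by making the extremal path jump directly in a single move (the paper's Phase~1). Your second paragraph, establishing $d(v,v^\ast)\le m-1$ and $d(v^\ast,w)\le m-1$ via single-swept-region pushes, is sound and matches the paper's Phase~2.

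However, the refinement you defer as ``the delicate step'' is a genuine gap, not a routine verification: the direct transition $P_1^v\to P_1^w$ inside the width-one strip is in general \emph{not} an edge of $\GT_\lam$. If $P_1^v$ crosses the strip at height $r_v$ and $P_1^w$ at height $r_w<r_v$, their union bounds the rectangle $[0,1]\times[r_w,r_v]$, and nothing prevents the untouched neighbor $P_2$ from containing a rung $\{(0,h),(1,h)\}$ with $r_w<h<r_v$; that rung is a chord of the rectangle, so the union of the two vertex diagrams has two bounded regions and the putative move is a $2$-face. Concretely, for $\lam=(1,2^2,3)$ take $P_1^v$ crossing at height $3$, $P_2^v$ crossing at height $1$, and $P_1^w$ crossing at height $0$: then \emph{no} vertex whose first path is $P_1^w$ is adjacent to $v$, so the merged move cannot be performed at the start of the route, and by symmetry the same obstruction can block it at the end. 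The lemma survives (in this example one must move $P_2$ first, after which the strip is clear), but this means the scheduling you worry about is a real obstruction requiring an additional argument — one must arrange that, at the moment $P_1$ makes its single jump, no other path cuts the strip between the two crossing heights, and then check the move count still comes out to $2m-2-\delta_{1,a_1}-\delta_{1,a_m}$. (The paper's own Phase~1 performs the direct move first and asserts the one-bounded-region property without addressing this configuration, so any complete write-up needs this argument made explicit.) Your final remark that $m=2$ with $a_1=a_2=1$ must be treated separately, since both savings would apply to the same path, is a correct and necessary caveat.
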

\begin{proof}
We give an algorithm to find a path between $v$ and $w$ of length at most $2m- \delta_{1, a_1} - \delta_{1, a_m}$. Assume that in the ladder diagram representation, vertex $v$ corresponds to noncrossing paths $v_1,\ldots,v_{m-1}$ where $v_j$ connects the origin $(0,0)$ to terminal vertex $t_j=(s_j,n-s_j)$ where $s_j=\sum_{i=0}^{j}a_i$. Similarly denote the noncrossing paths corresponding to $w$ as $w_1,\ldots,w_{m-1}$. 

Essentially, we want to change $v_1$ to $w_1$, $v_2$ to $w_2$, $\ldots$, $v_{m-1}$ to $w_{m_1}$, making sure that the $m-1$ paths we have are always noncrossing, and that the common refinement before and after changing some paths has exactly one bounded region. This ensures we are always traveling along edges in $\GT_\lam$. Note that the two paths ending at $t_0$ and $t_m$ are the same for every ladder diagram so we ignore them here.

\textbf{Phase 1:} If $a_1=1$, then $v_1,w_1$ are paths that go from $(0,0)$ to $(1,n-1)$. Therefore, there exists a unique index $r_v$ such that path $v_1$ passes through both $(0,r_v)$ and $(1,r_v)$. In other words, $r_v$ is the vertical index for $v_1$ to go from column 0 to column 1. Similarly we can define $r_w$. WLOG, assume that $r_v\geq r_w$. Because of this inequality, we know that path $w_1$ is contained inside of $v_1$ and therefore, the ladder diagram consisting of $w_1,v_1,v_2,\ldots,v_{m-1}$ has exactly 1 bounded region and it is thus an edge $e$ of the GT polytope containing $v$. Let $v'=(v_1',\ldots,v_{m-1}')$ be the other side of this edge (one side is $v$). Notice that the inner edge $\{(0,r_w),(1,r_w)\}$ is in the ladder diagram of $e$ but not in the ladder diagram of $v'$. Therefore, $\{(0,r_w),(1,r_w)\}$ must be in the ladder diagram of $v'$, which means $v_1'=w_1$, since $a_1=1$. Similarly, we can use one move to make $v_{m-1}$ and $w_{m-1}$ equal if $a_{m}=1$.

\begin{figure}[htp]
\includegraphics[scale=0.10]{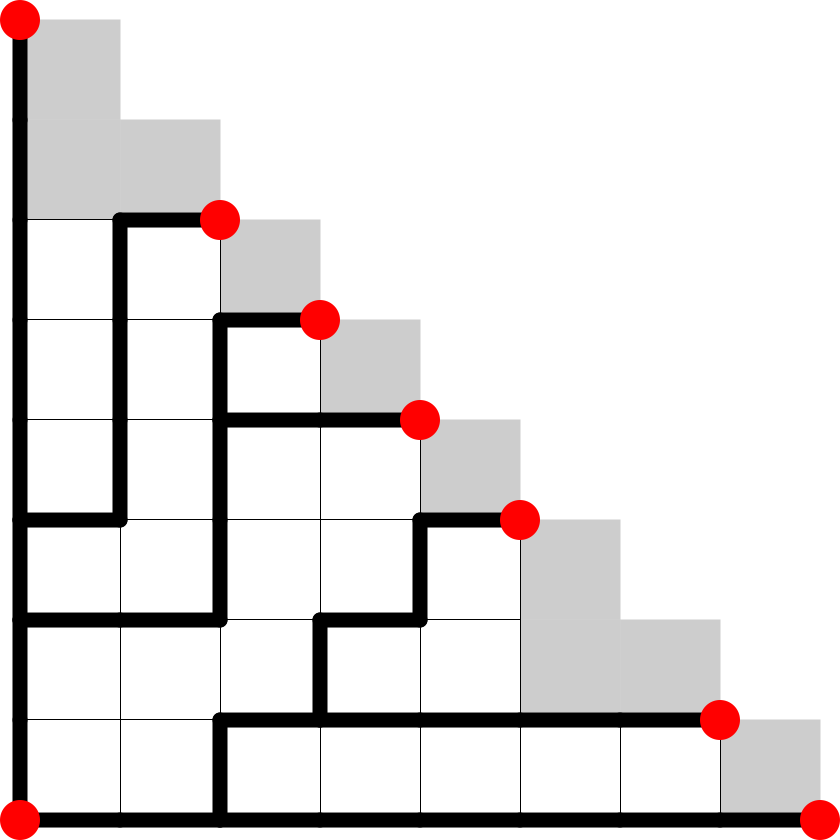}
\includegraphics[scale=0.10]{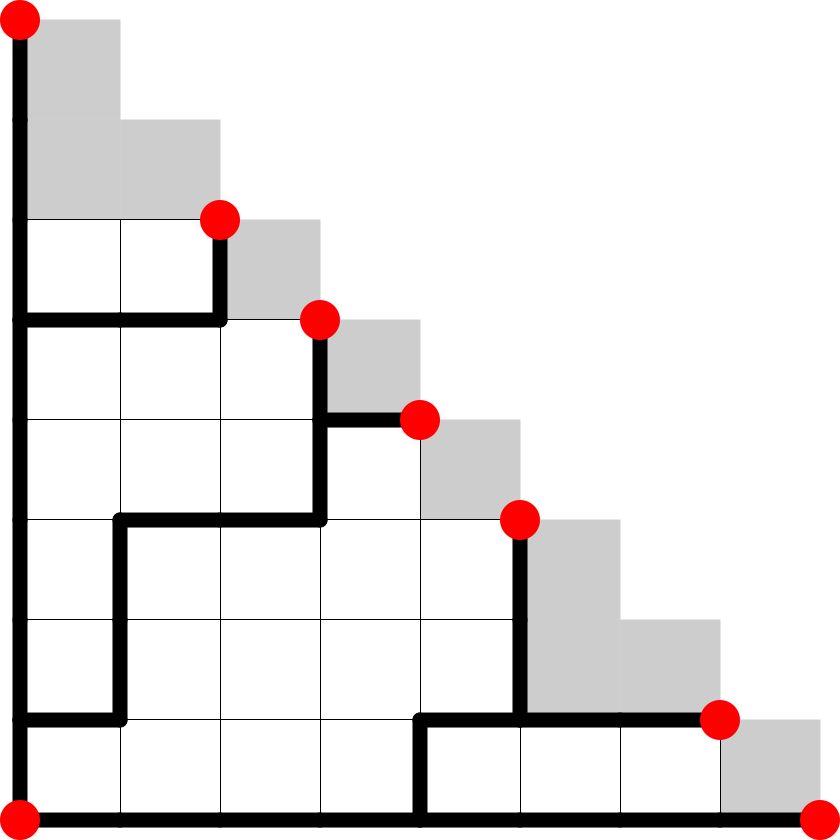}
\qquad\qquad
\includegraphics[scale=0.10]{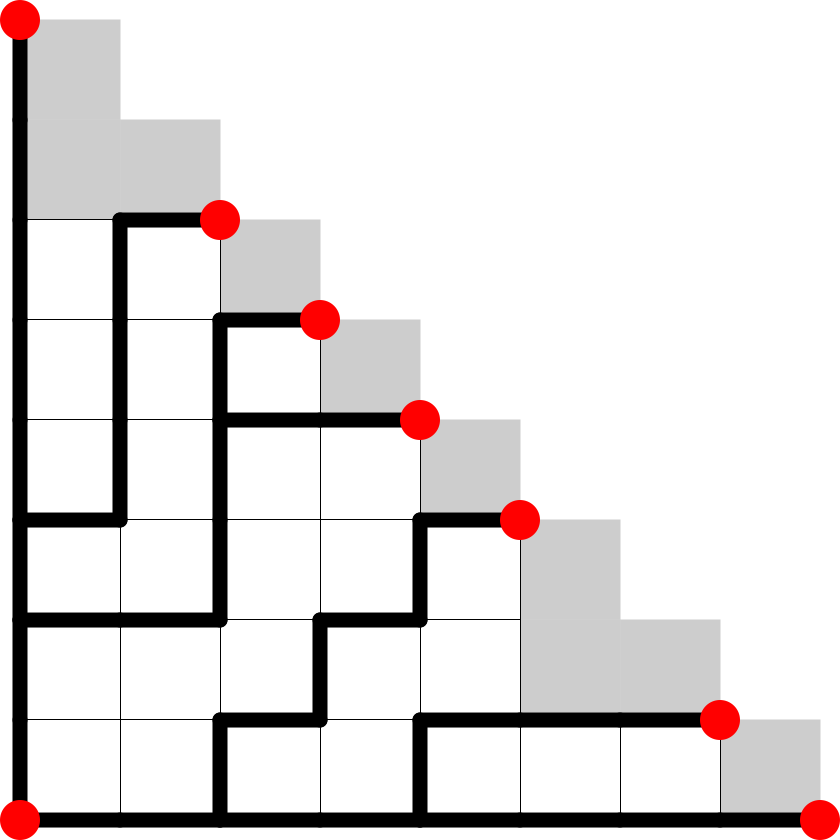}
\includegraphics[scale=0.10]{diamex10.png}
\caption{Phase 1 of the algorithm for Lemma \ref{lem:diamUB}. From left to right: the ladder diagram for $v$, $w$, $v'$, $w'(=w)$.}
\label{fig:diamalg1}
\end{figure}
\textbf{Phase 2:} Now we describe an algorithm that takes $v'$ to some vertex $u$ in at most $m-1-\delta_{1,a_1}-\delta_{1,a_m}$ steps. The algorithm works as follows: for each $i=1+\delta_{1,a_1},\ldots,m-1-\delta_{1,a_m}$, change path $v_i$ so that it starts at terminal vertex $t_i$, goes horizontally to the left until it meets and merges with path $v_{i-1}$. First, the ladder diagram after this change is clearly a vertex. Also, if we take the common refinement of the two ladder diagrams before and after the change, or equivalently, start with the old ladder diagram and add a new path $v_i'$ described above, then this new path simply cuts the tile bounded by $v_{i-1}$ and $v_i$ into two parts and thus there exists an edge between these two vertices. Figure \ref{fig:diamalg2} shows an example of this algorithm.

\begin{figure}[htp]
\includegraphics[scale=0.10]{diamex02.png}
\includegraphics[scale=0.10]{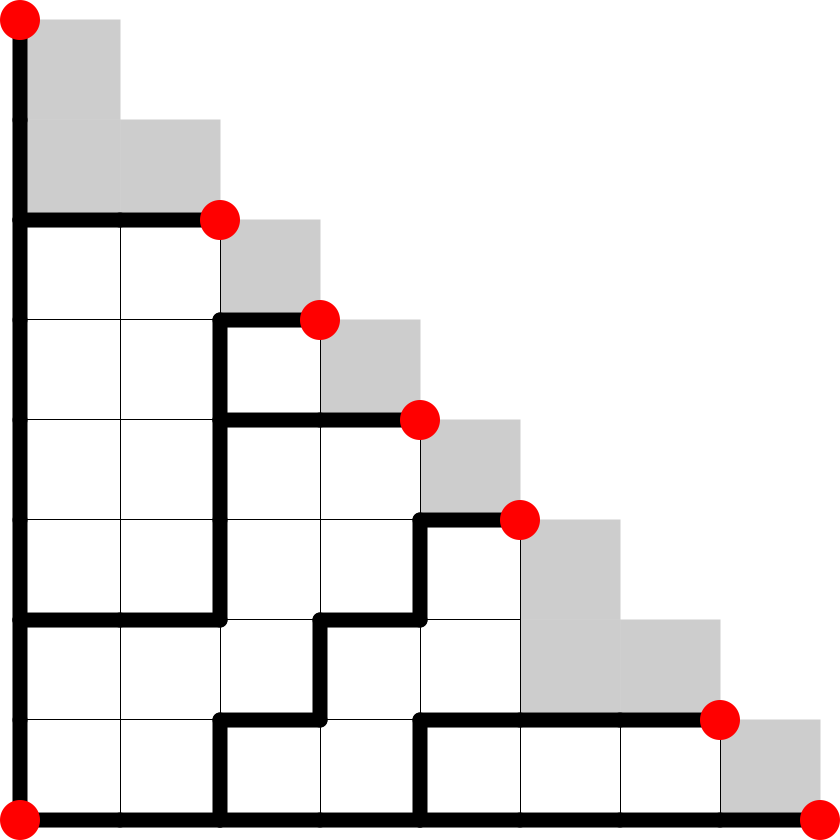}
\includegraphics[scale=0.10]{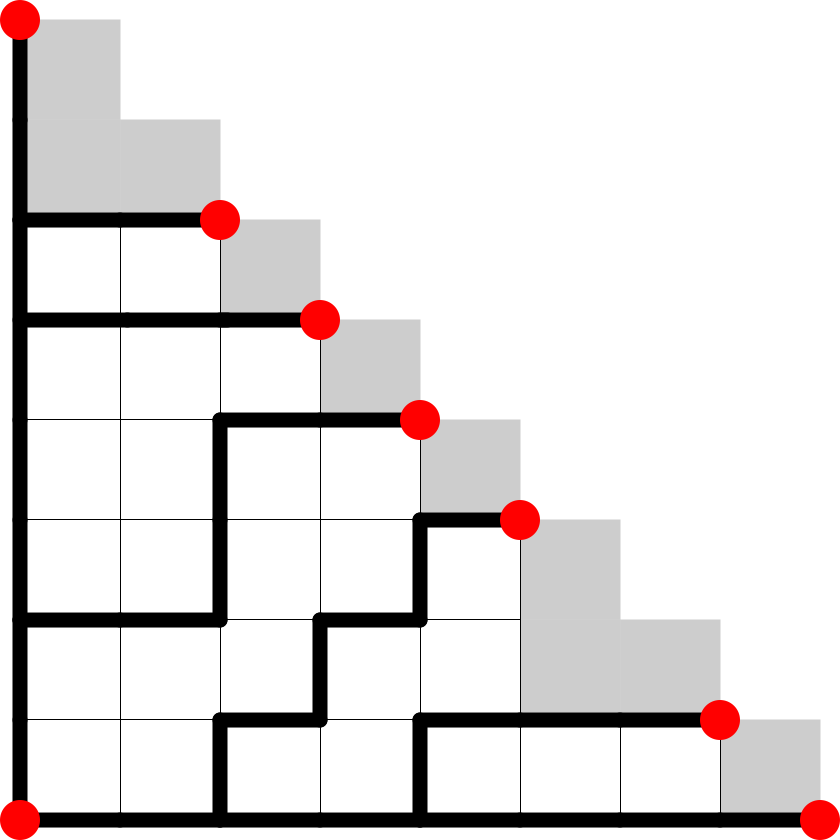}
\includegraphics[scale=0.10]{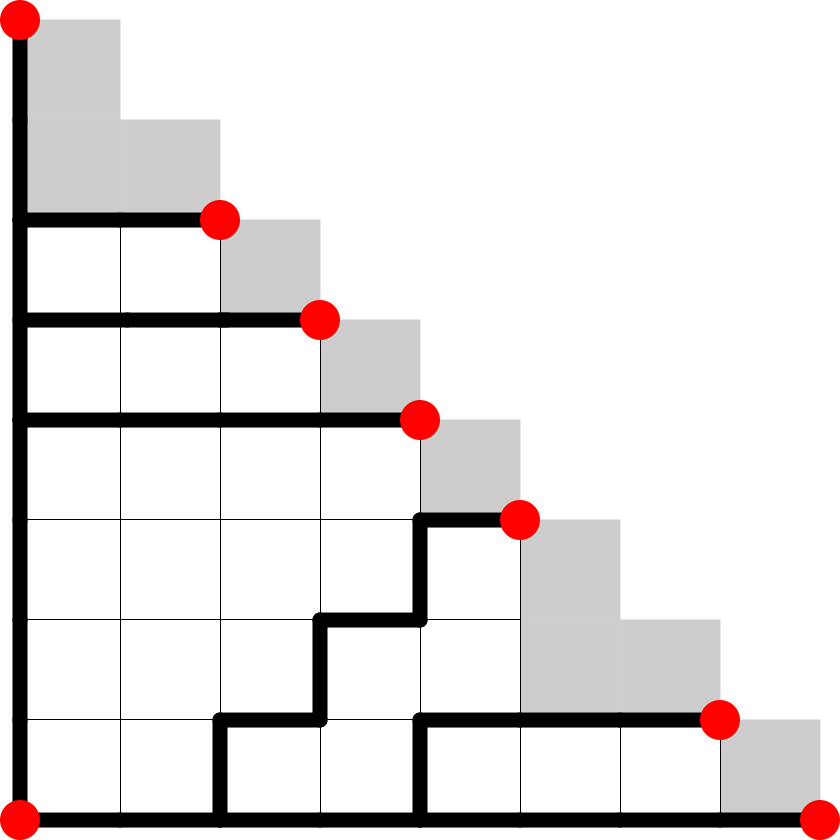}
\includegraphics[scale=0.10]{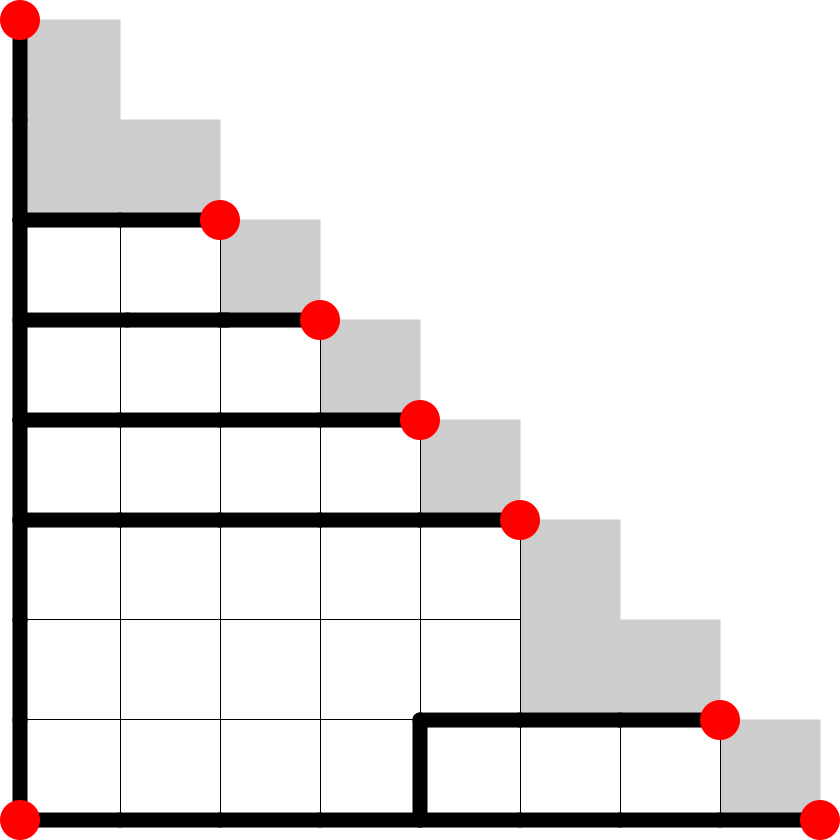}

\

\includegraphics[scale=0.10]{diamex10.png}
\includegraphics[scale=0.10]{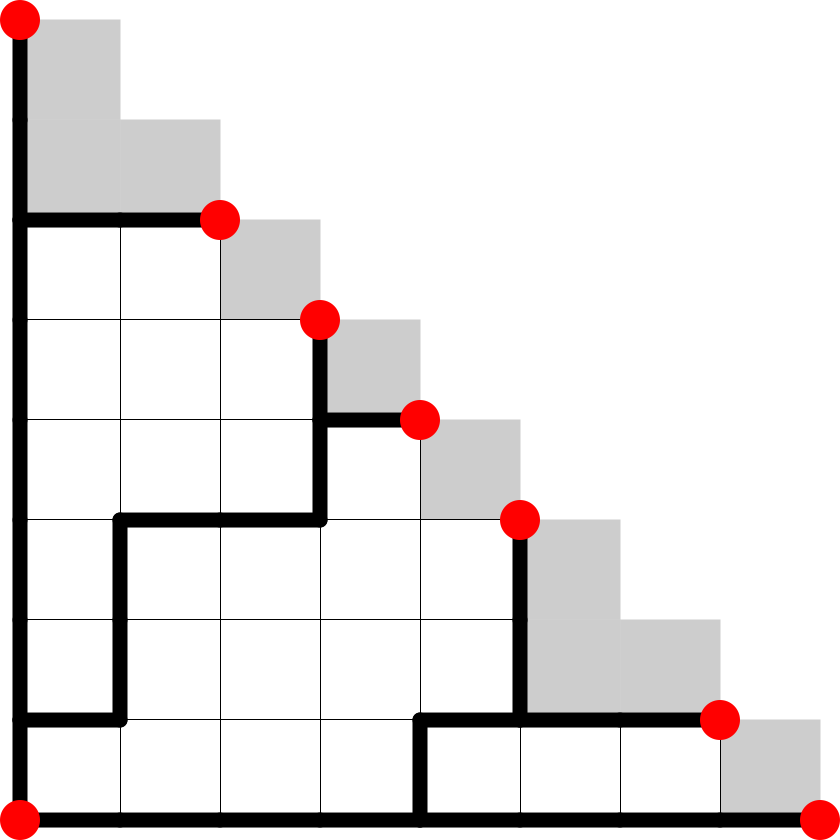}
\includegraphics[scale=0.10]{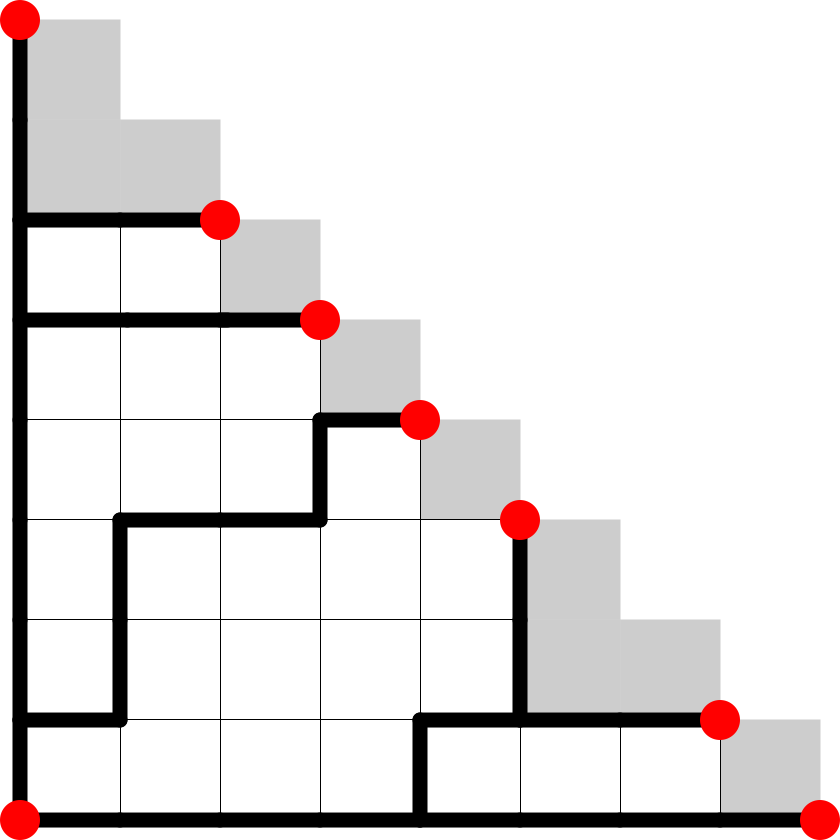}
\includegraphics[scale=0.10]{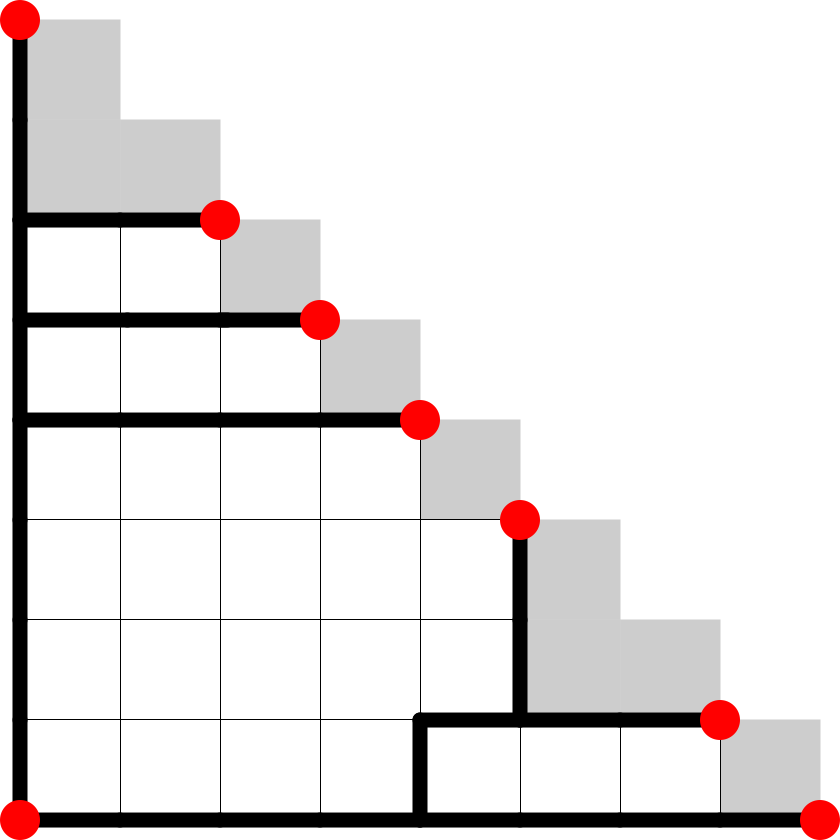}
\includegraphics[scale=0.10]{diamex06.png}
\caption{Phase 2 of the algorithm for Lemma \ref{lem:diamUB}. The first line shows steps that move from $v'$ to $u$; the second line shows steps that move from $w'$ to $u$.}
\label{fig:diamalg2}
\end{figure}

Similarly, we can apply the same algorithm to $w'$ to get to the same vertex $u$ in the same number of steps. The total number of steps is at most
$$(\delta_{1,a_1}+\delta_{1,a_m})+2(m-1-\delta_{1,a_1}-\delta_{1,a_m})=2m-2-\delta_{1,a_1}-\delta_{1,a_m}.$$
\end{proof}

\begin{lem}\label{lem:diamLB}
There exist two vertices separated by at least $2m -2-\delta_{1,a_1}-\delta_{1,a_m}$ edges.
\end{lem}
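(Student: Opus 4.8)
The plan is to exhibit an explicit pair of vertices $v,w$ and bound their graph distance from below by $D := 2m-2-\delta_{1,a_1}-\delta_{1,a_m}$, which together with Lemma~\ref{lem:diamUB} pins down the diameter. Working in the ladder-diagram model, I represent each vertex by its $m-1$ nested noncrossing internal paths $x_1 \succeq \cdots \succeq x_{m-1}$ (the boundary paths to $t_0$ and $t_m$ are fixed and ignored), where $x_i$ runs from the origin to $t_i$. A warning first: the naive guess, taking $v$ to be the ``up-first'' family (each $x_i$ ascends maximally before turning) and $w$ the ``right-first'' family, is the \emph{wrong} pair. One can move the lowest path to its target first, which clears the way for the next path, and so on; this sequential clearing connects the two extremes in only about $m-1$ steps, far fewer than $D$. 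The correct pair must be engineered so that no clearing order works and each ``thick'' internal path is genuinely forced to move twice.

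The main tool will be a monovariant $\Phi$ on vertices that changes by at most $1$ across every edge, so that $\diam(\GT_\lam) \ge \max\Phi - \min\Phi$. The structural fact I would establish first is a description of a single edge move: the unique bounded region of the union of two adjacent ladder diagrams is swept by sliding a maximal \emph{bundle} of currently coincident paths across it toward the neighboring path. In terms of the gaps between consecutive paths, such a move transfers area between the two gaps bordering the moved bundle, hence opens at most one gap and closes at most one gap. Invariants of the coincidence pattern, such as the number of nonempty gaps, are therefore automatically $1$-Lipschitz. The trouble is that this particular invariant has spread only about $m$, whereas I need spread about $2m$.

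To reach spread $D$ I would assign to each internal path $i$ two \emph{milestones}---informally, that $x_i$ has detached from its $v$-position and that it has reached its $w$-position---so that a thick path contributes up to $2$, while a thin boundary path (one with $a_1=1$ or $a_m=1$, which occupies only a single column or row and flips in one move) contributes only $1$; the sum is exactly $2(m-1)-\delta_{1,a_1}-\delta_{1,a_m}=D$. The heart of the proof, and the step I expect to be the main obstacle, is showing that $\Phi$ is genuinely $1$-Lipschitz for the chosen pair. The naive milestone count fails precisely because of the clearing phenomenon: once a neighbor has been moved aside, a thick path's transition region is no longer subdivided and the path can jump ``corner to corner'' in a single move, collecting both milestones at once. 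The pair $v,w$ and the milestones must thus be designed together, exploiting the nesting $x_1 \succeq \cdots \succeq x_{m-1}$ to guarantee that the blockers of any thick path cannot all be pre-cleared (a left neighbor obstructs only when it lies far to the southeast, a right neighbor only when it lies far to the northwest, and nesting forbids both extremes at once), so that at least one milestone of each thick path can be crossed only by a separate, dedicated move.

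As a fallback I would recast this as an induction on $m$: peel off the outermost distinct value, restrict every ladder diagram to the smaller grid $\Gamma_{\lam^-}$, and prove a super-additivity (``no shortcut'') lemma stating that any edge path in $\GT_\lam$ projects to an edge path in $\GT_{\lam^-}$ that is shorter by at most two, with equality only in controlled situations. This formulation isolates the same obstacle---ruling out the shortcuts that arise when moving the outer path simultaneously advances an inner one---but localizes it to a single peeling step, which may be more tractable than a single global potential.
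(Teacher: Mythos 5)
There is a genuine gap: you have written a plan that correctly locates the difficulty but explicitly defers its resolution. You never exhibit the pair of vertices $v,w$ (you only say they ``must be engineered''), you never define the potential $\Phi$ beyond the informal description of milestones, and you concede that the milestone count is \emph{not} $1$-Lipschitz as stated because a single move can collect both milestones of a path at once. That last point is exactly the heart of the lower bound, so what remains unproved is the entire content of the lemma. The paper's proof supplies precisely the two ingredients you are missing. First, the explicit pair: the interleaved zigzag vertices $z_h$ and $z_v$ of Definition~\ref{def:zigzag}, built so that for each internal index $s$ the superposition of $h_s$ and $v_s$ already has at least two bounded regions -- this is what forces each thick path to be touched at least twice (your ``two milestones''), and the virtual terminal vertices in that construction are what make the $\delta_{1,a_1},\delta_{1,a_m}$ corrections come out right. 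Second, instead of a $1$-Lipschitz potential, the paper runs an amortized argument on the sets $X_k$ of indices moved at step $k$ (each $X_k$ is an interval because each step has one bounded region): conditions (2)--(4) say that a move of an interval of size $j-i+1$ must be ``paid for'' by $j-i$ earlier preparatory moves and that last appearances are distinct, which is exactly what rules out your clearing/corner-to-corner shortcut; a reduction then replaces every non-singleton $X_k$ by a singleton while preserving the conditions, giving $\ell\ge 2m-2$, with a separate argument extracting the forced singletons $\{1\}$ and $\{m-1\}$ when $a_1=1$ or $a_m=1$.

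To be clear about why the deferred step cannot be waved through: your observation that nesting $x_1\succeq\cdots\succeq x_{m-1}$ ``forbids both extremes at once'' is not by itself enough, because the obstruction to a double-milestone move is not a property of the pair $(v,w)$ alone but of every intermediate vertex on a hypothetical short path, and intermediate vertices can merge several paths into a single bundle. Any correct argument must therefore control arbitrary intermediate configurations, which is what the paper's conditions (3) and (4) do and what a vertex-only potential cannot do without further input. Your fallback induction on $m$ has the same unproved core (the ``no shortcut'' lemma is asserted, not proved), so as it stands the proposal establishes only the easy direction that the answer is at most $D$ plausible, not the required lower bound.
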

In order to prove this lemma, we need to first construct vertices in terms of ladder diagrams. 

\begin{defn}[Zigzag lattice path]\label{def:zigzag}
We construct two vertices $z_h$ and $z_v$ that will be used in the proof for Lemma \ref{lem:diamLB}. Let $\lambda=(1^{a_1},\ldots,m^{a_m})$. If $a_1>1$, meaning that $(1,n-1)$ is not a terminal vertex, we call $(1,n-1)$ a \emph{virtual terminal vertex}. Similarly, if $(n-1,1)$ is not an actual terminal vertex, meaning that $a_m>1$, we call it a virtual terminal vertex.

We will consider the ladder diagram for a vertex of $\GT_{\lambda}$ as $m-1$ southwest lattice paths from terminal vertices to the origin. For $j=1,\ldots,m-1$, define a horizontal zigzag path, $h_j$, to be the path that starts at terminal vertex $t_j$, goes horizontally left until reaching a column where there exists a terminal vertex or a virtual terminal vertex on it, then goes vertically down until reaching a row where there exists a terminal vertex or a virtual terminal vertex, and so on and so forth until the path reaches column 0 or row 0. Similarly define a vertical zigzag path, $v_j$, with the only difference that it will start vertically instead of horizontally. Finally, let $z_h$ be the vertex of $\GT_{\lambda}$ represented by the ladder diagram $(h_1,\ldots,h_{m-1})$ and let $z_v$ be the vertex of $\GT_{\lambda}$ represented by $(v_1,\ldots,v_{m-1})$. Figure~\ref{fig:zigzag} shows the construction of an example.

\begin{figure}[htp]
\includegraphics[scale=0.12]{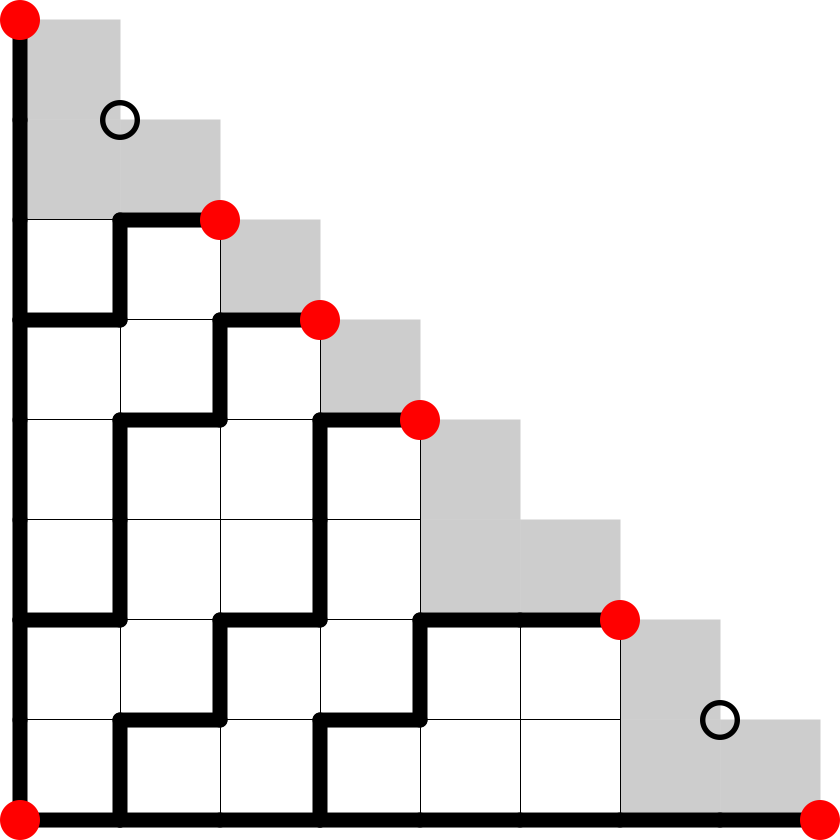}
\qquad
\includegraphics[scale=0.12]{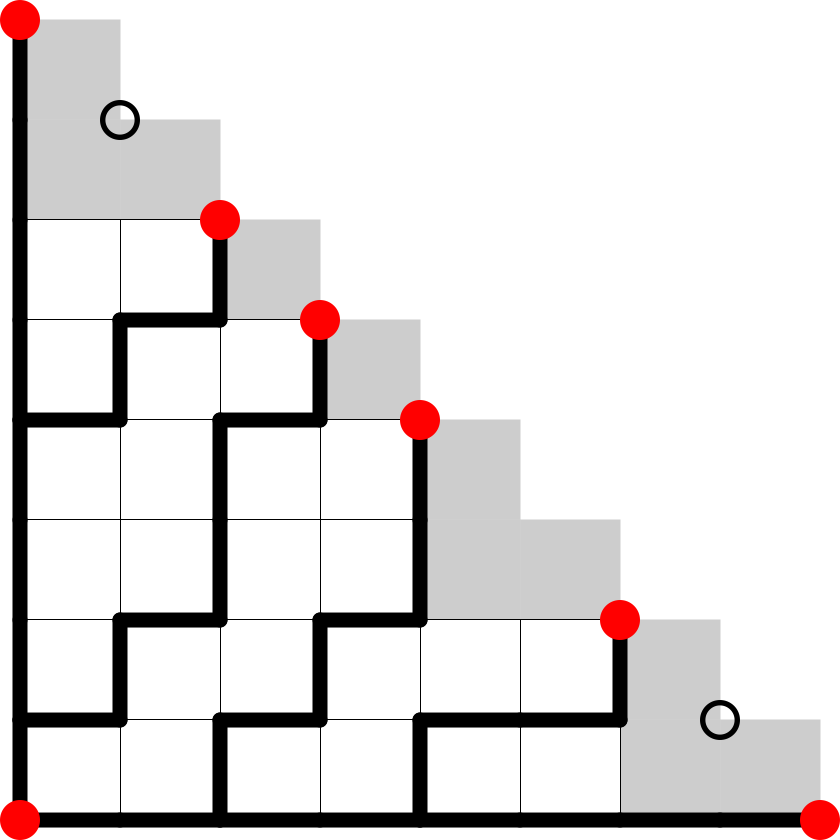}
\caption{Vertices $z_h$ (left) and $z_v$ (right) of $GT_{\lambda}$ with $\lambda=(1^2,2^1,3^1,4^2,5^2)$. Virtual terminal vertices are labeled as open circles.}
\label{fig:zigzag}
\end{figure}
\end{defn}

\begin{proof}[Proof of Lemma~\ref{lem:diamLB}]
We will first consider the case where $a_1,a_m\geq2$ so that the idea of the proof can be shown clearly. Afterward, we will deal with the details coming from either of them being 1.

Consider the vertices given in Definition~\ref{def:zigzag}. Assume that there is a sequence of vertices $z_h=y_0, y_1,\ldots,y_{\ell}=z_v$ in $\GT_{\lambda}$ such that $y_k$ and $y_{k+1}$ are connected by an edge. Since we can uniquely represent each vertex in this sequence as a union of $m-1$ lattice paths from the original to each terminal vertex, each step $y_k \rightarrow y_{k+1}$ can be thought of as simply changing these paths, such that the union of $y_k$ and $y_{k+1}$ has at most 1 bounded region. For the sake of convenience, for each $i=1,\ldots,m-1$, let $p_i$ be the path that goes from the origin to the terminal vertex $i$ generically for all $y_k$'s. For $k=1,\ldots,\ell$, define $X_k=\{i:p_i\text{ changes as we go from }y_{k-1}\text{ to }y_{k}\}.$ Since in each step we can have at most 1 bounded region between $y_{k-1}$ and $y_k$, it is clear that $X_k$ is of the form $\{i,i+1,\ldots,j\}$ for some $i\leq j$. 

We will now show that $X_1,\ldots,X_{\ell}$ must satisfy the following conditions:
\begin{enumerate}
\item For each $s\in[m-1]$, $s$ appears in at least two of the sets $X_{i}$'s. In other words, for each $s\in[m-1]$, there exists $1\leq i<j\leq \ell$ such that $s\in X_i$, $s\in X_j$. 
\item For each $s\neq s'\in[m-1]$, the last time $s$ appears in any of the sets (which is the unique index $b_s$ such that $s\in X_{b_s}$ and $k\notin X_{i}$ for all $i>b_k$) is different from the last time that $k'$ appears.
\item If $X_{k}=\{i,i+1,\ldots,j\}$, then at least $j-i$ of $i,i+1,\ldots,j$ must appear in some $X_{k'}$ for $k'<k$
\item If $X_{k}=\{i,i+1,\ldots,j\}$ and it is the last time that $s$ appears, then each one of $\{i,i+1,\ldots,j\}\backslash \{s\}$ must appear in (possibly different) $X_{k'}$ for $k' < k$. 
\end{enumerate}

Notice that if the last time $s$ appears is in some $X_k$, then $p_s$ has already become $v_s$ (Definition \ref{def:zigzag}) in $y_{k}$, and stays the same for the rest of the steps. We will then explain these conditions in detail one by one.

\textbf{Condition (1)}. If for some $s\in[m-1]$, it appears in only one set $X_k$, then it means when we go from vertex $y_{k-1}$ to $y_k$, the path $p_s$ is changed from $v_s$ to $h_s$ in exactly one step. But by construction, superimposing $h_s$ and $v_s$ will create at least 2 bounded regions, instead of one. Therefore, $y_{k-1}$ and $y_k$ are not connected by an edge, a contradiction. Therefore, each $s\in[m-1]$ appears in at least two sets.

\textbf{Condition (2)}. If $X_k$ is the last time that both $s$ and $s'$ appears, then it means that from $y_{k-1}$ to $y_k$, paths $p_{s'}$ and $p_s$ are changed to $v_{s'}$ and $v_{s}$ simultaneously. However, $v_{s'}$ and $v_s$ do not have any intersection in the interior of $\I_n$. So if we want to go back from $y_k$ to $y_{k-1}$, we have to change $v_s'$ and $v_s$ simultaneously, which will create two bounded regions when we superimpose $y_{k-1}$ and $y_{k}$, which is a contradiction.

\textbf{Condition (3)}. Since initially, $h_i$ and $h_j$ do not have any intersection in the interior of $\I_n$, in order to change multiple paths at the same time, we need to merge these paths first. Specifically, if we want to change paths $i,{i+1},\ldots,j$ simultaneously, we need to modify at least all but one of these paths to join them together.

\textbf{Condition (4)}. This condition is crucial and it justifies our choices for $z_h$ and $z_v$. Assume that $X_k=\{i,i+1,\ldots,j\}$ is the last time that $s$ appears, meaning that when we go from vertex $y_{k-1}$ to $y_k$, we change $p_s$ to $v_s$. If there exists $s'\neq s\in X_k$, such that path $s'$ hasn't changed before, we know $p_{s'}=h_{s'}$. Notice that since we change paths $i,i+1,\ldots,j$ simultaneously, all of these paths before and after the change will have an interior vertex in $\I_n$ in common. Therefore, $v_s$ and $h_{s'}$ must have a common interior vertex. Since $s\neq s'$, we must have $s'=s+1$. As we have assumed $a_1,a_m\geq2$, superimposing the ladder diagram of $y_{k-1}$ and $y_k$ will create at least $2$ bounded regions, because of the definition of $v_s$ and $h_{s+1}$. Therefore, we have a contradiction and thus all $s'\neq s$ must have already appeared at least once.

As we have proved that $X_1,\ldots,X_{\ell}$ satisfy all these conditions, we will jump out of the general setting of GT polytopes and look at any sequence of sets $X_1,\ldots,X_{\ell}$ that satisfies all these four conditions. We will show that any such sequence will have length $\ell\geq 2m-2$.

To do this, for $i=\ell,\ldots,1$, we look at the first set $X_i$ that is not a singleton. Say that it is $X_k$. If $X_k$ is the last time that some $s\in[m-1]$ appears, then we claim that changing $X_k$ to $\{s\}$ will still satisfy all four conditions. According to Condition (4), for each $s'\neq s$ that is in $X_k$, $s'$ must have appeared before. According to Condition (2), for each $s'\neq s$, this is not the last time that $s'$ appears so $s'$ will appear sometime later. Therefore, condition (1) still holds after changing $X_k$ to $\{s\}$. Condition (2) also holds because this change does not modify the indices of the sets where each $s'\in[m-1]$ appears last. Condition (3) and (4) hold trivially because we have less non-singleton sets to worry about. Another case is that $X_k$ is not the last time that any of the paths appears last. According to condition (3), there exists $s\in X_k$ such that each one of $X_k\backslash\{s\}$ has appeared before. Similarly, we claim that all these four conditions will hold after changing $X_k$ to $\{s\}$. For each one of $s'\in X_k\backslash\{s\}$, as it appears before and $X_k$ is not the last time that it appears, we know that $s'$ will appear at least twice even after this change. The number of appearances of $s$ does not change. Therefore, condition (1) is satisfied. Condition (2) holds because each one of $s'\in X_k$ will appear sometime later. Condition (3) and (4) hold trivially because similarly we have less non-singleton sets to worry about.

Continuing this procedure inductively, we will eventually end up with a sequence of sets $Y_1,\ldots,Y_{\ell}$ where each one is a singleton. As Condition (1) still holds, we conclude that $\ell\geq 2m-2$ as desired.

Now we consider the cases where $a_1$ and $a_m$ may be 1. We claim that if $a_1=1$, then $\{1\}$ must appear as a singleton as one of the terms in $X_1,\ldots,X_{\ell}$ and if $a_m=1$, then $\{m-1\}$ must appear as a singleton as one of the terms in the sequence. Notice the reasons are slightly different. If $a_1=1$, path $h_1$ has only 1 interior edge, namely $(0,n-1)-(1,n-1)$. In order for it to merge with other paths, it must change on its own first, meaning that $\{1\}$ will appear, because $h_1$ and $p_2,\ldots,p_{m-1}$ will never have any interior intersection. If it never merges with other paths, then it must appear as $\{1\}$ at some point so that $p_1$ can be actually changed from $h_1$ to $v_1$. If $a_m=1$, at some point, path $p_{m-1}$ must be changed to $v_{m-1}$, which has only 1 interior edge. This change is recorded as $\{m-1\}$ because $p_1,\ldots,p_{m-2}$ cannot have any interior intersection with $v_{m-1}$.

Therefore, if $a_1=1$ (or/and $a_m=1$), in the sequence $X_1,\ldots,X_{\ell}$, we can take out the singleton $\{1\}$ (or/and $\{m-1\}$) and delete all other $1$'s (or/and $m-1$'s) in the sets. The remaining sequence will satisfy the four conditions mentioned above. We have taken out at least $\delta_{1,a_1}+\delta_{1,a_m}$ singletons of $\{1\}$ and $\{m-1\}$ and there are $m-1-\delta_{1,a_1}-\delta_{1,a_m}$ paths remaining. So by the same argument, the total length of the sequence
$$\ell\geq (\delta_{1,a_1}+\delta_{1,a_m})+2(m-1-\delta_{1,a_1}-\delta_{1,a_m})=2m-2-\delta_{1,a_1}-\delta_{1,a_m}$$
as desired.
\end{proof}

Lemma~\ref{lem:diamUB} and Lemma~\ref{lem:diamLB} together prove our first result, Theorem~\ref{thm:diameter}.

\begin{thm1.1}[Diameter of 1-skeleton]
$\diam(\GT_\lambda)$ = $2m-2-\delta_{1, a_1}-\delta_{1, a_m}$.
\end{thm1.1}

\section{Combinatorial Automorphisms}\label{sec:comb-automorphisms}

In this section, we completely describe the combinatorial automorphism group of $\GT_\lambda$ which is denoted by $\Aut(\GT_\lambda)$. A combinatorial automorphism of a polytope is a permutation of its faces that preserves inclusion. Such a permutation can be thought of as an automorphism of the face lattice. For the sake of brevity, we will refer to combinatorial automorphisms simply as automorphisms. In Section~\ref{subsec:sym} we present the generators of the automorphism group and the relations between them. In Section~\ref{subsec:facets} we discuss \emph{facet chains} which are then used in Section~\ref{subsec:proving-autom-gp} to show that our generators form the entire automorphism group. As explained in Remark~\ref{rmk:ai}, it suffices to consider $\lambda = (1^{a_1}, \ldots, m^{a_m})$ where $a_1, \ldots ,a_m \in \mathbb{Z}_{>0}$. 

\subsection{Automorphisms}\label{subsec:sym} 
We begin by identifying the generators of $\Aut(\GT_\lambda)$. By Theorem~\ref{thm:ladders}, to show that a map is an automorphism of $\GT_{\lambda}$, it suffices to show that the map is an automorphism of $\mathcal{F}(\gamlam)$. 

\begin{prop}[The Corner Symmetry]
For any $\lambda$, there is an order 2 automorphism $\mu$ of $\mathcal{F}(\gamlam)$ which acts by exchanging two pairs of edges near $(0,0)$: if a ladder diagram contains $\{(0, 0), (1, 0)\}$ and $\{(1, 0), (1, 1)\}$, then $\mu$ acts on this diagram by replacing these edges with $\{(0, 0), (0, 1)\}$ and $\{(0, 1), (1, 1)\}$ and vice versa. 
\end{prop}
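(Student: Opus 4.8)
The plan is to realize $\mu$ as the involution induced by a single transposition of edges of $\gamlam$, and then to concentrate all the work in checking that this transposition carries ladder diagrams to ladder diagrams. Write $e_1=\{(0,0),(1,0)\}$, $e_2=\{(1,0),(1,1)\}$, $e_3=\{(0,0),(0,1)\}$, $e_4=\{(0,1),(1,1)\}$ for the four edges of the unit square at the origin. First I would record the basic fact that $e_1$ and $e_3$ lie in \emph{every} ladder diagram: the unique North-East path from the origin to $t_m=(n,0)$ is the horizontal axis (which begins with $e_1$), the unique such path to $t_0=(0,n)$ is the vertical axis (which begins with $e_3$), and both are forced to appear by the first ladder-diagram axiom. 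Consequently the only freedom at the corner square is the presence or absence of the two ``inner'' edges $e_2,e_4$, and I would define $\mu$ to be the map that toggles them, i.e. $\mu(\mathcal{L})=\pi(\mathcal{L})$ where $\pi$ is the involution of the edge set of $\gamlam$ swapping $e_2\leftrightarrow e_4$ and fixing every other edge. Since $e_2,e_4$ belong to $\gamlam$ exactly when $(1,1)\in\gamlam$, i.e. when $m\ge 2$, I would note that for $m=1$ the polytope is a point and the statement is vacuous. This matches the proposition: on a diagram containing the lower-right hook $e_1,e_2$ it produces the upper-left hook $e_3,e_4$ while retaining the forced edges $e_1,e_3$.

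With this framing, two of the three requirements are immediate. Because $\pi$ is an involutive bijection of the edge set, $\mu^2=\mathrm{id}$; and because $A\subseteq B\iff\pi(A)\subseteq\pi(B)$ for edge subsets, $\mu$ preserves inclusion. Thus once $\mu$ is shown to send ladder diagrams to ladder diagrams, it is automatically an order-preserving involution of $\mathcal{F}(\gamlam)$, hence a combinatorial automorphism.

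The heart of the argument, and the step I expect to be the main obstacle, is well-definedness: if $\mathcal{L}$ is a ladder diagram then so is $\mathcal{L}'=\mu(\mathcal{L})$. I would split into the four local configurations of $(e_2,e_4)$. If both or neither is present, $\pi$ fixes the edge set of $\mathcal{L}$, so $\mathcal{L}'=\mathcal{L}$ and there is nothing to check. The substantive case is when exactly one is present, say $e_2\in\mathcal{L}$ and $e_4\notin\mathcal{L}$ (the other case being symmetric), so that $\mathcal{L}'=(\mathcal{L}\setminus\{e_2\})\cup\{e_4\}$. For axiom (1), any origin-to-terminal path in $\mathcal{L}$ using $e_2$ entered $(1,1)$ as $(0,0)\to(1,0)\to(1,1)$; replacing this initial hook by $(0,0)\to(0,1)\to(1,1)$, which is legal since $e_3,e_4\in\mathcal{L}'$, and keeping the identical tail from $(1,1)$ onward yields a path in $\mathcal{L}'$ to the same terminal, while paths not using $e_2$ and the forced axis paths to $t_0,t_m$ are untouched. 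For axiom (2), every edge of $\mathcal{L}'$ other than $e_4$ lay on some origin-to-terminal path of $\mathcal{L}$; that path survives into $\mathcal{L}'$ after the same rerouting, since the only edge it can lose is $e_2$ and the edge in question is either $e_1$, covered by the horizontal axis, or lies beyond $(1,1)$ and is unaffected, while the new edge $e_4$ lies on the rerouted path. The one point requiring care is that in this configuration $(1,1)$ and everything North-East of it was reachable only through $e_2$, so I must confirm the reroute through $e_4$ restores exactly that access; this holds because $e_2$ and $e_4$ are the only edges of $\gamlam$ entering $(1,1)$.

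Finally, to see that $\mu$ has order exactly $2$ rather than $1$, I would exhibit a single ladder diagram it moves: for $m\ge 2$ one routes the path to $t_1$ through the lower-right corner (using $e_1,e_2$) while the axis paths avoid $e_4$, giving a diagram in the ``$e_2$ only'' configuration whose image under $\mu$ is the distinct ``$e_4$ only'' diagram. I expect the bookkeeping in the rerouting argument, precisely verifying that no edge loses all of its covering paths when $e_2$ is deleted, to be the only genuinely delicate part; everything else follows formally from the edge-permutation description.
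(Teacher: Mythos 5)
Your proof is correct, but it takes a genuinely different route from the paper's. The paper realizes $\mu$ as a linear involution of the ambient space $\R^{n(n+1)/2}$, namely $x_{n,1}\mapsto x_{n-1,1}+x_{n,2}-x_{n,1}$ with all other coordinates fixed; a one-line inequality check shows this maps $\GT_\lambda$ into itself, it squares to the identity, and being affine it therefore induces a combinatorial automorphism, whose effect on ladder diagrams (swapping the equality $x_{n,1}=x_{n-1,1}$ with $x_{n,1}=x_{n,2}$) is the stated edge exchange. You instead define $\mu$ intrinsically as the edge transposition $e_2\leftrightarrow e_4$ on $\gamlam$ and verify the two ladder-diagram axioms directly by rerouting the initial hook of any origin-to-terminal path through $(1,1)$; your observations that $e_1$ and $e_3$ are forced (they lie on the unique paths to $t_m=(n,0)$ and $t_0=(0,n)$) and that $e_2,e_4$ are the only edges of $\gamlam$ giving access to $(1,1)$ are exactly what make the verification close up, and your case analysis is complete. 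The paper's argument is shorter and yields the strictly stronger fact that $\mu$ is an affine symmetry of the polytope --- a distinction the authors care about, since they later remark that the flip symmetry $\rho$ is generally \emph{not} affine --- while your argument is self-contained within the combinatorial model $\mathcal{F}(\gamlam)$ and spells out the step the paper leaves to the reader (``examining what $\mu$ does to equalities''). One small caveat applies to both proofs: for $m=1$ the unit square at the origin is not contained in $\gamlam$ and the polytope is a point, so no automorphism of order exactly $2$ exists; the paper excludes this degenerate case only implicitly, as do you.
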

\begin{proof}
Let $\mu:\R^{\frac{n(n+1)}{2}} \rightarrow \R^{\frac{n(n+1)}{2}}$ be the linear map that sends $x_{n, 1} \mapsto x_{n, 2} + x_{n-1, 1} - x_{n, 1}$ and acts as the identity on all other $x_{i, j}$. 
Since $x_{n-1, 1} \leq x_{n-1, 1} + x_{n, 2} - x_{n, 1} \leq x_{n, 2}$, all necessary inequalities are satisfied so $\mu(\GT_\lambda) \subset \GT_\lambda$. Since $\mu^2 = Id$, the previous inclusion is actually an equality. Combining this fact with the fact that $\mu$ is an affine transformation, we see $\mu$ induces a combinatorial automorphism on $\GT_\lambda$, which we shall abuse notation and call $\mu$. Examining what $\mu$ does to equalities shows it acts in the described manner on ladder diagrams.

\end{proof}

\begin{figure}[htp]\label{fig:cornersym}
\includegraphics[scale = 0.12]{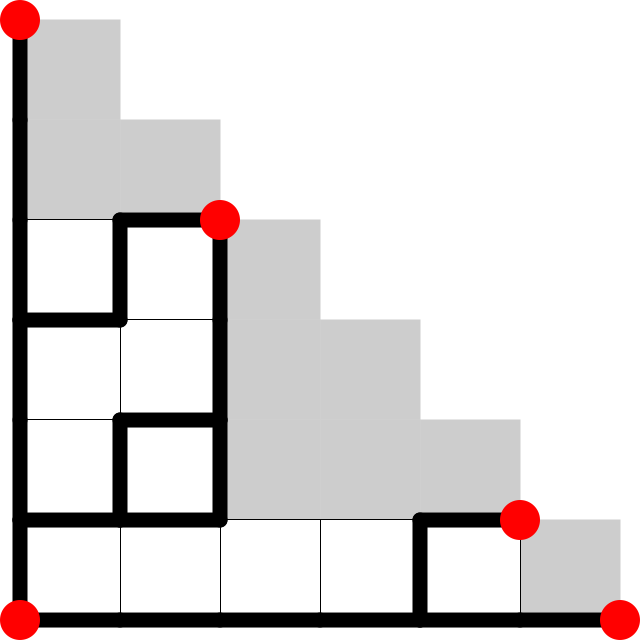}
\qquad
\includegraphics[scale = 0.12]{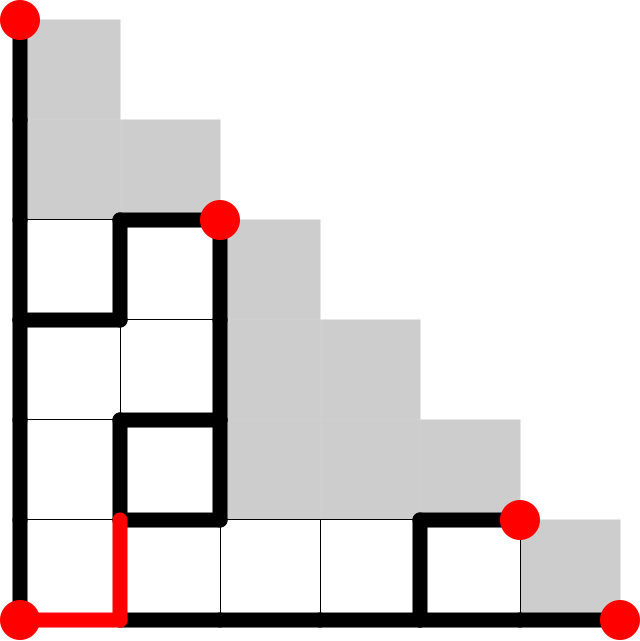}
\caption{Action of $\mu$ on a ladder diagram.}
\end{figure}

The next map is very similar to the Corner Symmetry $\mu$ except it occurs at the $k$th terminal vertex instead of at the origin. The argument is very similar to Prop 4.1, so we will omit some details.

\begin{prop}[The $k$-Corner Symmetry]
Let the $k$th terminal vertex be $t_k = (n-i, i)$ and suppose that $a_k, a_{k+1} \geq 2$. Then there is an order 2 automorphism $\mu_k$ which acts by exchanging two pairs of edges near $t_k$: if a ladder diagram contains $\{(n-i, i),(n-i, i-1)\}$ and $\{(n-i, i-1), (n-i-1, i-1)\}$, then $\mu_k$ acts on this diagram by replacing these edges with $\{(n-i, i),(i-1, i)\}$ and $\{(n-i-1, i), (n-i-1, i-1)\}$.
\end{prop}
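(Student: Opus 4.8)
The plan is to mirror the proof of the Corner Symmetry almost verbatim, replacing the corner coordinate $x_{n,1}$ at the origin by the unique free coordinate whose associated cell in $\gamlam$ is the unit square immediately southwest of $t_k$. Writing the terminal vertex as $t_k=(s_k,n-s_k)$ (so that $n-i=s_k$ in the statement), this cell has lower-left corner $(s_k-1,\,n-s_k-1)$, and under the isomorphism of Theorem~\ref{thm:ladders} it corresponds to the triangle entry $x_{s_k+1,\,s_k}$. I would therefore define the affine map $\mu_k:\R^{n(n+1)/2}\to\R^{n(n+1)/2}$ by
\[
x_{s_k+1,\,s_k}\;\longmapsto\;x_{s_k+1,\,s_k-1}+x_{s_k+2,\,s_k}-x_{s_k+1,\,s_k},
\]
acting as the identity on every other coordinate; that is, $\mu_k$ reflects $x_{s_k+1,\,s_k}$ about the interval cut out by its left neighbor $x_{s_k+1,\,s_k-1}$ and its lower neighbor $x_{s_k+2,\,s_k}$, exactly as $\mu$ reflected $x_{n,1}$ about the interval $[x_{n-1,1},x_{n,2}]$.

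The first substantive step is to check $\mu_k(\GT_\lam)\subseteq\GT_\lam$. Since $\mu_k$ alters only $x_{s_k+1,\,s_k}$, the only inequalities that can fail are the four involving this entry, namely $x_{s_k,\,s_k}\le x_{s_k+1,\,s_k}\le x_{s_k+2,\,s_k}$ and $x_{s_k+1,\,s_k-1}\le x_{s_k+1,\,s_k}\le x_{s_k+1,\,s_k+1}$. Writing $x'$ for the image of $x_{s_k+1,\,s_k}$, the two inequalities with the left and lower neighbors hold automatically, because $x_{s_k+1,\,s_k-1}\le x_{s_k+1,\,s_k}\le x_{s_k+2,\,s_k}$ forces $x'\in[x_{s_k+1,\,s_k-1},\,x_{s_k+2,\,s_k}]$. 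The remaining two inequalities are against the fixed diagonal entries $x_{s_k,\,s_k}=k$ and $x_{s_k+1,\,s_k+1}=k+1$, and this is exactly where the hypothesis $a_k,a_{k+1}\ge 2$ enters: $a_k\ge 2$ gives $\lambda_{s_k-1}=\lambda_{s_k}=k$, so $x_{s_k,\,s_k-1}=k$ is fixed and hence $x_{s_k+1,\,s_k-1}\ge k$; symmetrically $a_{k+1}\ge 2$ gives $x_{s_k+2,\,s_k+1}=k+1$ fixed and hence $x_{s_k+2,\,s_k}\le k+1$. Combining these, $x'\in[x_{s_k+1,\,s_k-1},\,x_{s_k+2,\,s_k}]\subseteq[k,k+1]$, so both diagonal inequalities survive. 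Since $\mu_k^2=\mathrm{Id}$ the inclusion is an equality, and being an affine involution, $\mu_k$ permutes the faces of $\GT_\lam$ and so descends to a combinatorial automorphism.

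Finally I would read off the action on ladder diagrams as in the Corner Symmetry: tracking which equalities among adjacent entries are created or destroyed shows that $\mu_k$ interchanges the relation $x_{s_k+1,\,s_k}=x_{s_k+1,\,s_k-1}$ with $x_{s_k+1,\,s_k}=x_{s_k+2,\,s_k}$, which is precisely the swap of the two staircase edges around the cell southwest of $t_k$. I expect the main obstacle to be the inequality verification rather than the bookkeeping: unlike the corner entry $x_{n,1}$, the entry $x_{s_k+1,\,s_k}$ lies in the interior of the triangle and genuinely has four neighbors, so the real content is to show that reflecting about the two \emph{variable} neighbors cannot violate the two \emph{fixed} diagonal bounds. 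Isolating that $a_k,a_{k+1}\ge 2$ is exactly the condition forcing $x_{s_k+1,\,s_k-1}\ge k$ and $x_{s_k+2,\,s_k}\le k+1$ (hence $x'\in[k,k+1]$) is the crux, and it also explains why the symmetry is only claimed under that hypothesis. A secondary point is that the left and lower neighbors appearing in the reflection are actually valid triangle entries, which follows from $s_k\ge a_k\ge 2$ and $n-s_k\ge a_{k+1}\ge 2$.
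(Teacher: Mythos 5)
Your proposal is correct and follows essentially the same route as the paper: both define the affine involution $x_{s_k+1,s_k}\mapsto x_{s_k+1,s_k-1}+x_{s_k+2,s_k}-x_{s_k+1,s_k}$ and verify the four constraints via the chain $\lambda_k\le x_{s_k+1,s_k-1}\le x'\le x_{s_k+2,s_k}\le\lambda_{k+1}$, where the outer bounds come from the fixed entries forced by $a_k,a_{k+1}\ge 2$. Your write-up is in fact somewhat more explicit than the paper's about why those outer bounds hold and why the relevant neighbor coordinates exist.
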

\begin{proof}
Let $x_{i', j'}$ be the coordinate in Figure~\ref{fig:ineqdiagram} which is immediately adjacent to the values $\lambda_k$ and $\lambda_{k+1}$. Let $\mu_k: \R^{\frac{n(n+1)}{2}} \rightarrow \R^{\frac{n(n+1)}{2}}$ be the linear map that sends $x_{i', j'} \mapsto x_{i', j'-1} + x_{i'+1, j'} - x_{i', j'}$ and acts as the identity on all other $x_{i, j}$. Since $a_k, a_{k+1} \geq 2$, we have 
$$\lambda_k \leq x_{i', j'-1} \leq x_{i', j'-1} +x_{i'+1, j'} - x_{i', j'} \leq x_{i'+1, j'} \leq \lambda_{k+1}$$
so all 4 constraints on the coordinate $x_{i', j'}$ are satisfied. Since $\tau^2 = 1$, the same argument as in Prop. 4.1 gives an automorphism $\tau$ with the desired properties. 

\end{proof}

\begin{figure}[htp]\label{fig:kcornersym}
\includegraphics[scale = 0.12]{sym-mus-sigma-before.png}
\qquad
\includegraphics[scale = 0.12]{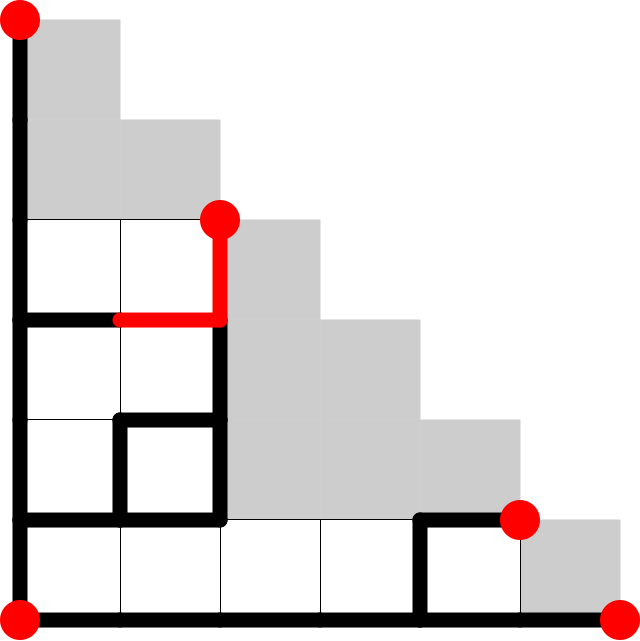}
\caption{Action of $\mu_k$ on a ladder diagram.}
\end{figure}

\begin{prop}[Symmetric Group Symmetry]
If $a_1 = 1$, then $S_{a_2} \subset \Aut(\GT_\lambda)$. Similarly if $a_m = 1$, then $S_{a_{m-1}} \subset \Aut(\GT_\lambda)$.
\end{prop}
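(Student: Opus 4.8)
The plan is to realize $S_{a_2}$ concretely as a group of affine symmetries of $\GT_\lambda$. Since any affine symmetry of a polytope induces an automorphism of its face lattice — equivalently, of $\mathcal{F}(\gamlam)$ via Theorem~\ref{thm:ladders} — this is enough to prove containment. I would treat $a_1 = 1$ in full and obtain the $a_m = 1$ statement by the mirror-image construction at the other end of the array, which sends the value-$2$ block at the top to the value-$(m-1)$ block in the last row (whose free entries form a chain $m-1 \le x_{n,n-a_{m-1}} \le \cdots \le x_{n,n-1} \le m$ capped by $x_{n,n} = m$) and turns the statement into the $a_1 = 1$ case.

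First I would isolate the coordinates that $S_{a_2}$ should move. When $a_1 = 1$ we have $\lambda_1 = 1$ and $\lambda_2 = \cdots = \lambda_{a_2+1} = 2$, so every entry $x_{i',j'}$ with $2 \le j' \le i' \le a_2+1$ is fixed equal to $2$. Hence the only free entries in rows $2, \ldots, a_2+1$ are the first-column entries, forming a chain $1 = x_{1,1} \le x_{2,1} \le \cdots \le x_{a_2+1,1} \le 2$, where the upper bound comes from $x_{k,1} \le x_{k,2} = 2$. The structural point I would verify with care is that this chain is almost decoupled from the rest of the array: each $x_{k,1}$ with $2 \le k \le a_2$ is constrained only by its chain-neighbors $x_{k-1,1}, x_{k+1,1}$ and by the fixed value $x_{k,2} = 2$, while the single external inequality $x_{a_2+1,1} \le x_{a_2+2,1}$ touches the block only through its top entry $x_{a_2+1,1}$.

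Next I would pass to difference coordinates $z_i := x_{i+1,1} - x_{i,1}$ for $1 \le i \le a_2$. In these coordinates the constraints internal to the block become simply $z_i \ge 0$ together with $\sum_{i=1}^{a_2} z_i = x_{a_2+1,1} - 1 \le 1$ (the intermediate caps $x_{k,1} \le 2$ are partial sums, dominated by the full sum), and the external coupling sees only the total $\sum_i z_i$. All of these relations are symmetric under permuting $z_1, \ldots, z_{a_2}$. So for each $\pi \in S_{a_2}$ I would define the affine map replacing the block entries by $x_{k,1} \mapsto 1 + \sum_{i<k} z_{\pi(i)}$ and fixing every other coordinate; since $\sum_i z_{\pi(i)} = \sum_i z_i$, the top entry $x_{a_2+1,1}$ — and thus the external inequality — is preserved, and the image again lies in $\GT_\lambda$. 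Then $\pi \mapsto (\text{this map})$ is an injective homomorphism $S_{a_2} \to \Aut(\GT_\lambda)$, exhibiting $S_{a_2}$ as a subgroup. Equivalently, the adjacent transpositions are the involutions $x_{k,1} \mapsto x_{k-1,1} + x_{k+1,1} - x_{k,1}$ for $2 \le k \le a_2$, which in $z$-coordinates swap $z_{k-1} \leftrightarrow z_k$ and hence satisfy the Coxeter relations and generate $S_{a_2}$.

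The main obstacle I anticipate is the decoupling verification in the second step: one must confirm that permuting the interior first-column entries disturbs no inequality elsewhere in the array. This rests on two facts — that rows $2, \ldots, a_2+1$ have no free entries besides those in column $1$, and that the permutation fixes the total $\sum_i z_i$ and therefore fixes $x_{a_2+1,1}$ exactly where the block meets the rest of the pattern. Once this is established, checking that each map is an affine automorphism of $\GT_\lambda$ and that distinct $\pi$ act distinctly (so the homomorphism is faithful and the image is exactly $S_{a_2}$) is routine, and the $a_m = 1$ statement follows from the reflected construction.
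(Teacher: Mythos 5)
Your proof is correct, but it takes a genuinely different route from the paper's. The paper argues entirely inside the ladder-diagram model: it numbers the topmost $a_2$ horizontal edges in the first column of $\gamlam$, lets $\sigma \in S_{a_2}$ act by permuting which of these edges each path uses, and observes that this gives an order-preserving bijection of $\cal{F}(\gamlam)$. You instead realize $S_{a_2}$ by explicit affine symmetries of $\GT_\lambda$ permuting the consecutive differences $z_i$ of the free first-column entries in the chain $1 \le x_{2,1} \le \cdots \le x_{a_2+1,1} \le 2$ --- the same strategy the paper uses for the Corner and $k$-Corner symmetries, extended from a single coordinate to a whole block. The decoupling facts you single out are exactly the right things to check and they all hold: rows $2,\ldots,a_2+1$ have no free entries outside column $1$; the only external inequality is $x_{a_2+1,1} \le x_{a_2+2,1}$, which depends only on $\sum_i z_i$; and the intermediate caps $x_{k,1}\le 2$ are partial sums dominated by the total, hence automatic. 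Injectivity into $\Aut(\GT_\lambda)$ is also fine, since distinct permutations permute the $a_2$ distinct facets $\{z_i = 0\}$ differently. What your approach buys is the stronger conclusion that these automorphisms are affinely realizable (worth noting in light of the paper's remark that the Flip Symmetry generally is not), at the cost of coordinate bookkeeping; the paper's route avoids coordinates and stays within its uniform ladder-diagram framework, though its verification that the permuted paths reassemble into valid ladder diagrams is terser than your computation. The only imprecision is cosmetic: defining $\phi_\pi$ by $z_i(\phi_\pi(x)) = z_{\pi(i)}(x)$ yields an anti-homomorphism, so one should use $\pi^{-1}$ (or note that the image is still a subgroup isomorphic to $S_{a_2}$, which is all the proposition requires).
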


\begin{proof}
Number the topmost $a_2$ horizontal edges in the first column of $\gamlam$ from $1$ through $a_2$. Let $\sigma$ be any element in $S_{a_2}$. Then $\sigma$ acts on each ladder diagram by permuting the horizontal edges in the first column.  Specifically, for a given ladder diagram, if there is a path passing through the $i$th horizontal edge and $\sigma(i) = j$, then $\sigma$ acts on this ladder diagram by changing this path to go through the $j$th horizontal edge. Note that there is only one way to change the vertical edges of the modified path to yield a valid ladder diagram. Again, $\sigma$'s action on $\cal{F}(\GT_\lam)$ has finite order so $\sigma$ is a bijection on the underlying set of $\cal{F}(\GT_\lam)$. Furthermore, $\sigma$ preserves inclusions of ladder diagrams so $\sigma$ is an automorphism of $\cal{F}(\GT_\lam)$.
\end{proof}

\begin{figure}[htp]\label{fig:sigmasym}
\includegraphics[scale = 0.12]{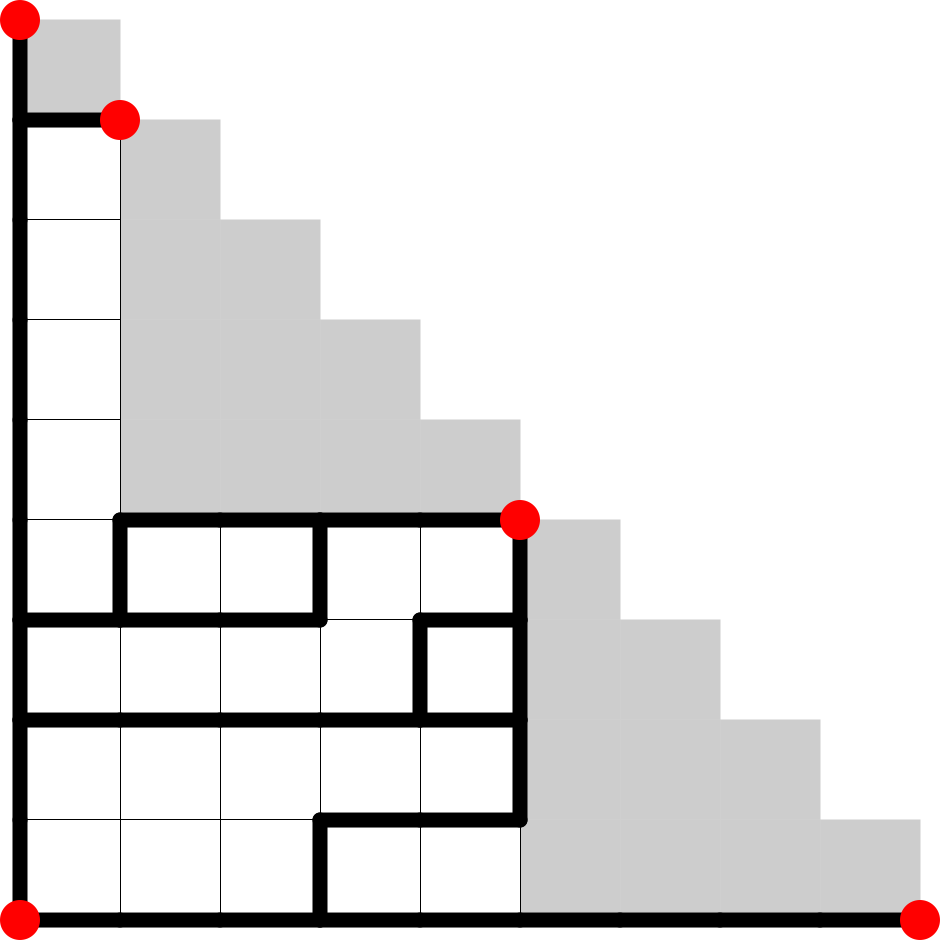}
\quad
\includegraphics[scale = 0.12]{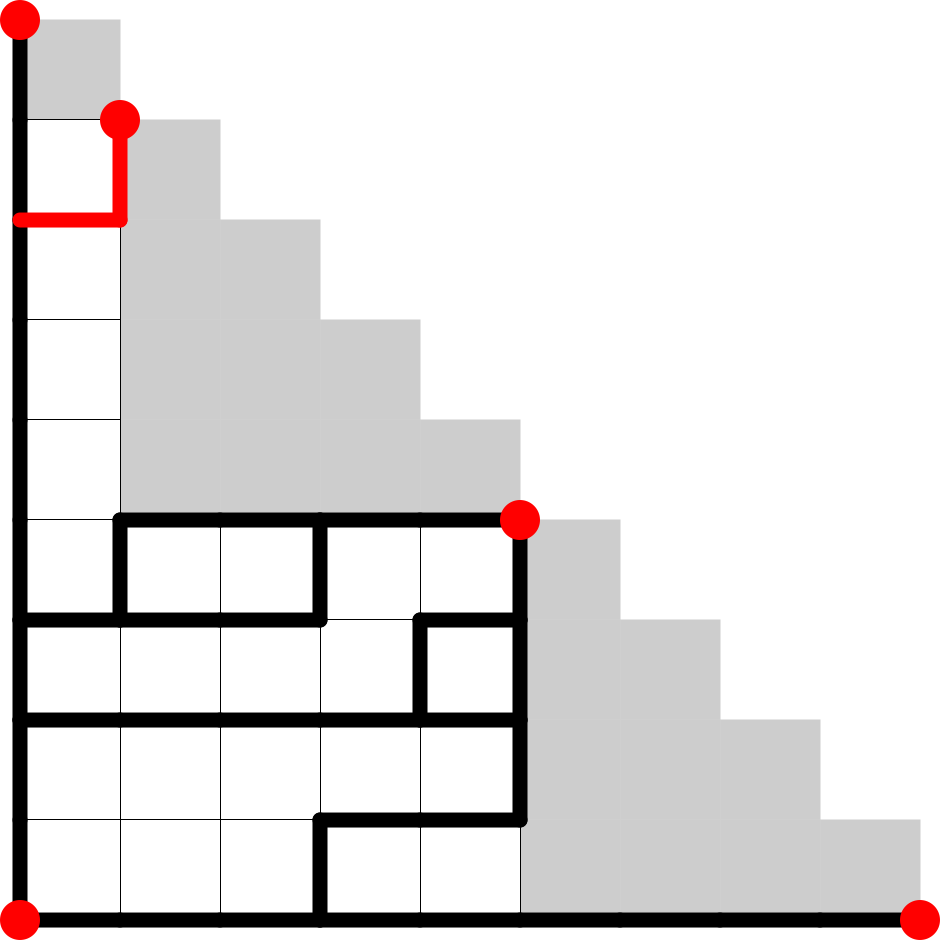}
\quad
\includegraphics[scale = 0.12]{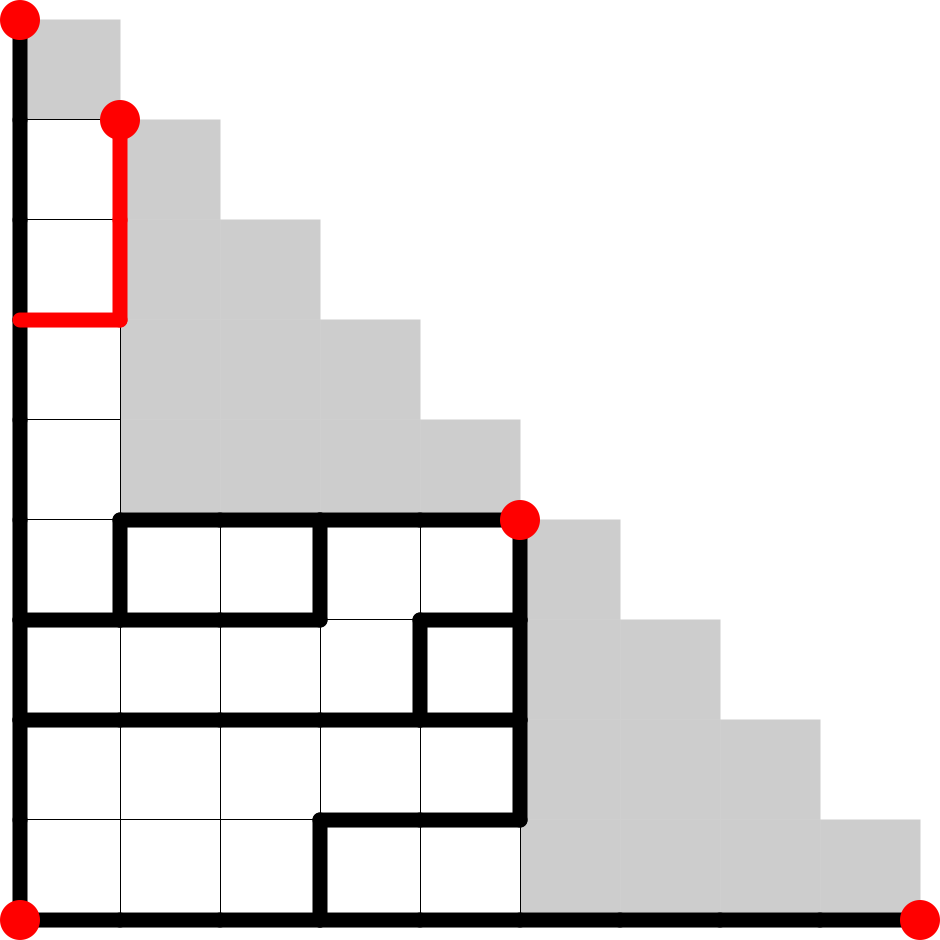}
\caption{Action of $(1 2 3) \in S_4$ on a ladder diagram.}
\end{figure}

\begin{prop}[The Flip Symmetry]
Suppose that $\lambda = \lambda'$. Then there is an order 2 automorphism $\rho$ of $\cal{F}(GT_\lambda)$ which reflects ladder diagrams over the line $y = x$.
\label{prop:flip-sym}
\end{prop}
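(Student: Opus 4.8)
The plan is to prove this directly at the level of ladder diagrams and then transport the result through the isomorphism $\mathcal{F}(\GT_\lambda) \cong \mathcal{F}(\gamlam)$ of Theorem~\ref{thm:ladders}: it suffices to show that reflection over the line $y=x$ is a poset automorphism of $\mathcal{F}(\gamlam)$. Let $r$ be the involution of the first quadrant $Q$ sending $(i,j) \mapsto (j,i)$. It fixes the origin and swaps the roles of North and East steps, so it is a graph automorphism of $Q$ carrying North-East paths to North-East paths. The crux is to show that $r$ restricts to a graph automorphism of $\gamlam$; once this is established, everything else is formal.

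The key computation is that $r$ permutes the terminal vertices, and this is exactly where the hypothesis $\lambda = \lambda'$ enters. Recall $t_j = (s_j, n - s_j)$ with $s_j = \sum_{i=0}^{j} a_i$, so $r(t_j) = (n - s_j, s_j)$. I would verify that $r(t_j) = t_{m-j}$, i.e. that $n - s_j = s_{m-j}$, as follows:
\[
n - s_j = \sum_{i=j+1}^{m} a_i = \sum_{i=1}^{m-j} a_{m+1-i} = \sum_{i=1}^{m-j} a_i = s_{m-j},
\]
where the second equality is a reindexing and the third uses reverse symmetry $a_i = a_{m+1-i}$. Thus $r$ fixes the origin and permutes $\{t_0, \ldots, t_m\}$ by $t_j \mapsto t_{m-j}$. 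Since $\gamlam$ is precisely the union of all North-East paths from the origin to the terminal vertices, and $r$ sends such paths to such paths while permuting the distinguished endpoints, $r$ maps $\gamlam$ bijectively onto itself, giving a graph automorphism.

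From here the conclusion follows formally. A subgraph $\mathcal{L} \subseteq \gamlam$ is a ladder diagram iff the origin is joined to every terminal vertex by a North-East path in $\mathcal{L}$ and every edge of $\mathcal{L}$ lies on such a path; both conditions are preserved by the graph automorphism $r$, so $r$ induces a bijection on ladder diagrams that preserves inclusion, hence an automorphism $\rho$ of $\mathcal{F}(\gamlam)$. Moreover $\rho^2 = \mathrm{id}$ since $r^2 = \mathrm{id}$, so $\rho$ has order $2$, and Theorem~\ref{thm:ladders} transports it to the desired order-$2$ automorphism of $\mathcal{F}(\GT_\lambda)$. The only genuine obstacle is the terminal-vertex computation above; it is worth emphasizing that it truly requires $\lambda = \lambda'$, since otherwise $r$ would send some $t_j$ off the set of terminal vertices and fail to preserve $\gamlam$. (One could alternatively present $\rho$ as an affine involution of $\mathbb{R}^{n(n+1)/2}$ transposing the triangular array, in the style of the Corner Symmetry proofs, but the ladder-diagram argument is cleaner and makes the role of reverse symmetry transparent.)
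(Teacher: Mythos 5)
Your proposal is correct and takes essentially the same approach as the paper: both establish that reverse symmetry makes the terminal vertices symmetric about $y=x$, so reflection preserves $\gamlam$ and hence ladder diagrams, and then conclude via $\rho^2 = \mathrm{id}$ and preservation of inclusion. Your version simply makes explicit the computation $n - s_j = s_{m-j}$ that the paper states in words.
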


\begin{proof}
Recall that if $\lam = (1^{a_1}, 2^{a_2}, \ldots, m^{a_m})$, then $\lam' := (1^{a_m}, 2^{a_{m-1}}, \ldots, m^{a_1})$. So if $\lam = \lam'$, we have $a_i = a_{m-i+1}$ for $1 \le i \le m$. In other words, the terminal vertices of $\gamlam$ are symmetric about the line $y=x$. Thus $\rho$ sends ladder diagrams to valid ladder diagrams. Since $\rho^2 = 1$ and $\rho$ preserves inclusions of ladder diagrams, $\rho$ is an automorphism of $\mathcal{F}(\gamlam)$.
\end{proof}

\begin{figure}[htp]\label{fig:sym2}
\includegraphics[scale = 0.12]{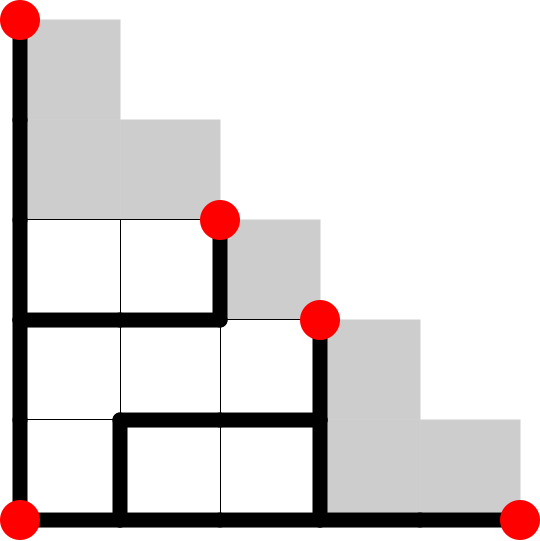}
\qquad
\includegraphics[scale = 0.12]{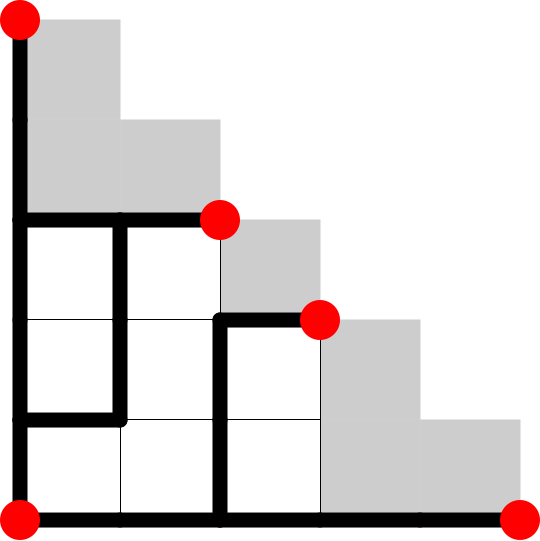}
\caption{Action of $\rho$ on ladder diagram.}
\end{figure}

\begin{remark*}
When $\lambda= (1^{a_1}, 2^{a_2}, \ldots, m^{a_m})$, this symmetry is actually affine: it can be realized as the map $f(x) = -Px + m \cdot \boldsymbol 1$, where $P$ is a permutation matrix, and $\boldsymbol 1$ is the vector with each entry equal to 1. However, for $\lambda$ of a different form, this is no longer true, even in small cases. For example when $\lambda = (1, 2, 4)$, a straightforward computation shows that $\rho$ is no longer affine.
\end{remark*}

\begin{prop}[The $m = 2$ Rotation Symmetry]
Suppose that $m = 2$. There is an order 2 automorphism $\tau$ on $\mathcal{F}(\GT_\lambda)$ that rotates all paths from $(0, 0)$ to $t_1$ by $180^\circ$ into paths from $t_1$ to $(0, 0)$.
\end{prop}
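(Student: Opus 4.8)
The plan is to follow the strategy used for the Corner Symmetry and the Flip Symmetry (Proposition~\ref{prop:flip-sym}): exhibit $\tau$ as an affine involution of $\GT_\lambda$, which being affine automatically descends to a combinatorial automorphism of the face lattice, and then read off its action on ladder diagrams from its action on vertices. Working in the $d = a_1 a_2$ free coordinates $x_{i,j}$ with $a_1 < i \le n$ and $1 \le j \le a_1$ (every other $x_{i,j}$ is fixed to $1$ or $2$), I would define
\[
\tau(x)_{i,j} \;=\; 3 - x_{\,n + a_1 + 1 - i,\; a_1 + 1 - j\,},
\]
so that $\tau$ rotates the rectangular array of free coordinates by $180^\circ$ and replaces each value $t$ by $3-t$. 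The conceptual observation driving the argument is that for $m=2$ the polytope $\GT_\lambda$ is, up to the affine rescaling $[0,1]\to[1,2]$, the order polytope of the grid poset $[a_2]\times[a_1]$: the free coordinates form an $a_2\times a_1$ array, the inequalities reduce to monotonicity along rows and columns, and the fixed blocks of $1$'s and $2$'s supply a global minimum and maximum. The index rotation $\omega\colon(i,j)\mapsto(n+a_1+1-i,\,a_1+1-j)$ is an order-reversing bijection of this poset, and $\tau$ is exactly the induced central self-duality $f\mapsto\mathbf 1-f\circ\omega$.

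The first step is to verify $\tau(\GT_\lambda)=\GT_\lambda$. Each covering inequality $x_{i,j}\le x_{i',j'}$ is sent to $x_{\omega(i',j')}\le x_{\omega(i,j)}$, which is again a valid constraint because $\omega$ reverses the order, while the boundary constraints $x\ge 1$ and $x\le 2$ are interchanged by $t\mapsto 3-t$. A direct computation gives $\tau^2=\mathrm{id}$, so the inclusion $\tau(\GT_\lambda)\subseteq\GT_\lambda$ is in fact an equality. As in the proof of the Corner Symmetry, an affine self-bijection of $\GT_\lambda$ induces an automorphism of $\mathcal F(\GT_\lambda)$, and the involution $\tau$ is nontrivial (it swaps the minimal and maximal free coordinates), so it has order $2$.

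It remains to identify the action on $\mathcal F(\gamlam)$. Since an automorphism of the face lattice is determined by its action on vertices, it suffices to describe $\tau$ there. A vertex of the order polytope is the indicator of an order filter $F$, which under the isomorphism of Theorem~\ref{thm:ladders} is precisely a monotone lattice path from $(0,0)$ to $t_1$ separating the $1$-region from the $2$-region. Unwinding the definition, $\tau$ sends $F$ to $\omega(P\setminus F)$, i.e.\ it carries the separating path to its image under the $180^\circ$ rotation about the center of the rectangle $[0,a_1]\times[0,a_2]$ — exactly the rotation of a path from $(0,0)$ to $t_1$ into a path from $t_1$ to $(0,0)$ asserted in the statement.

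I would route the argument through the affine map rather than defining $\tau$ directly on all ladder diagrams because the literal $180^\circ$ rotation of a face's edge set need not be a valid ladder diagram of the same dimension (the forced axis edges interact with the rotated interior), so the rotation has a clean meaning only on the single path of a vertex; the affine map produces the correct extension to all faces automatically. Accordingly, the step I expect to require the most care is the verification that $\tau$ preserves every defining inequality, in particular the constraints comparing a free coordinate against a coordinate forced to equal $1$ or $2$, since it is these that pin down the reflection constant $3=\lambda_1+\lambda_n$ and the precise index rotation $\omega$. Recognizing $\GT_\lambda$ as an order polytope at the outset turns this into a systematic check rather than ad hoc case analysis.
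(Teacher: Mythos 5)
Your proof is correct, but it takes a genuinely different route from the paper's. The paper defines $\tau$ directly on ladder diagrams and disposes of the verification in two sentences: the rotation takes ladder diagrams to valid ladder diagrams, squares to the identity, and preserves inclusion, hence is an automorphism of $\mathcal{F}(\gamlam)$. You instead realize $\tau$ as an affine involution of the polytope itself, in the style of the paper's proof of the Corner Symmetry: you observe that for $m=2$ the free coordinates form an $a_2\times a_1$ grid so that $\GT_\lambda$ is an affine copy of the order polytope of $[a_2]\times[a_1]$, and you take $\tau$ to be the self-duality $x_{i,j}\mapsto 3-x_{\omega(i,j)}$ induced by the order-reversing $180^\circ$ rotation $\omega$ of that grid; the inequality checks, the involution property, and the identification of the action on vertices (filters and their separating paths) all go through as you describe. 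What your approach buys is twofold: it proves the stronger statement that $\tau$ is an affine symmetry of $\GT_\lambda$ (paralleling the paper's remark after Proposition~\ref{prop:flip-sym} that $\rho$ is affine for these $\lambda$), and it sidesteps the one point the paper's terse argument glosses over, namely that naively rotating an arbitrary face's edge set inside the rectangle is not obviously a valid ladder diagram of the same dimension because the always-present axis edges do not rotate onto themselves; your reduction to the action on vertices, which determines the face-lattice automorphism since every face is the join of its vertices, handles this cleanly. The trade-off is length: the paper's combinatorial shortcut is shorter if one is willing to accept the well-definedness of the rotation on all ladder diagrams.
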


\begin{proof} 
The map $\tau$ takes ladder diagrams to valid ladder diagrams. 
Since $\tau^2 = 1$ and $\tau$ preserves inclusions of ladder diagrams, $\tau$ is an automorphism of $\mathcal{F}(\GT_\lam)$.
\end{proof}

\begin{figure}[htp]\label{fig:twosymmetry}
\includegraphics[scale = 0.12]{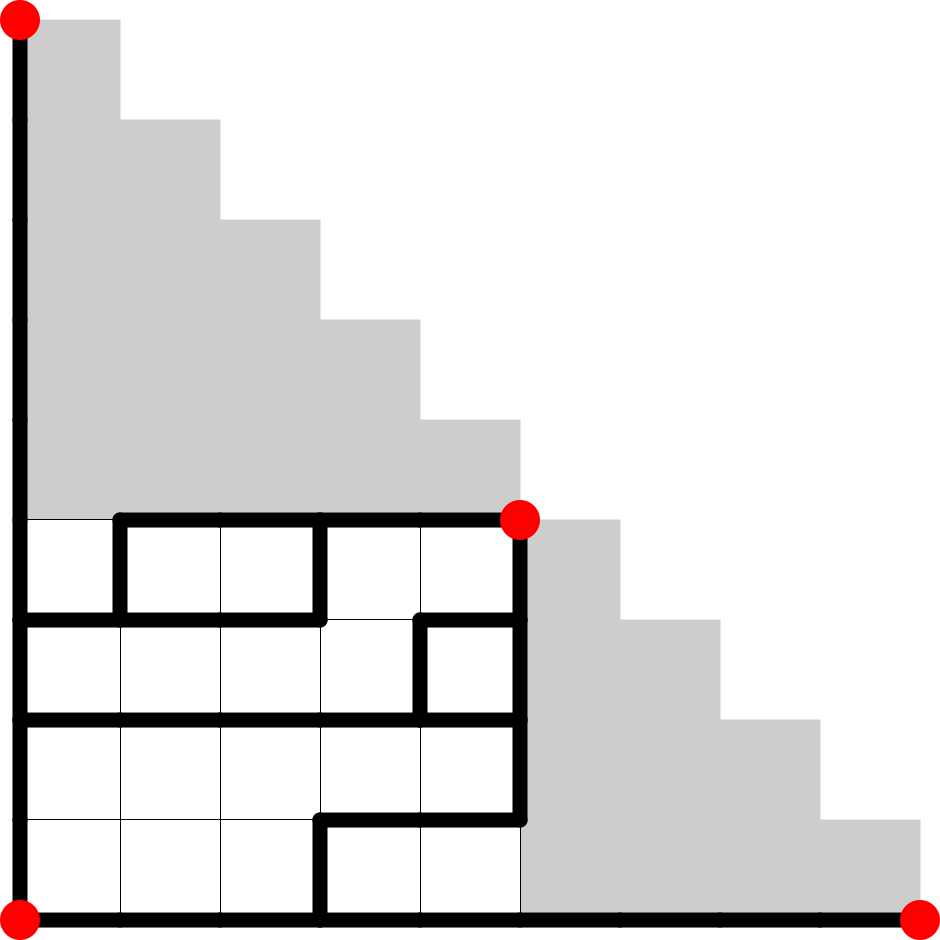}
\qquad
\includegraphics[scale = 0.12]{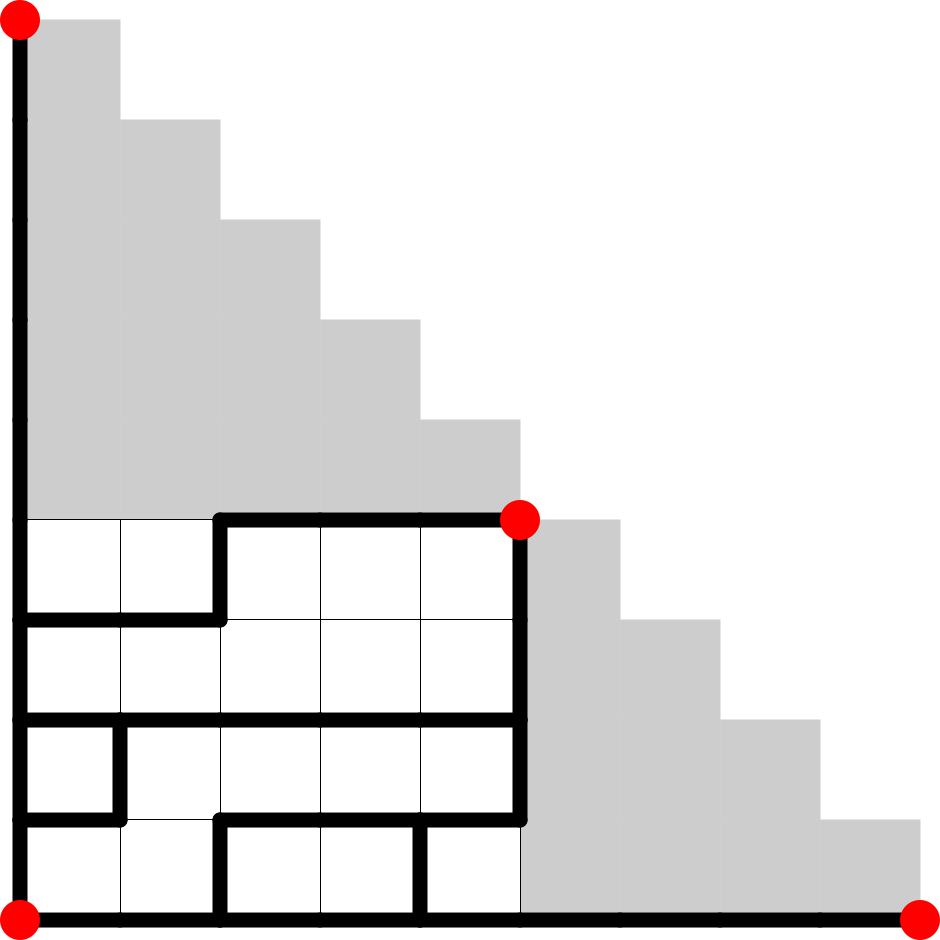}
\caption{Action of $\tau$ on a ladder diagram.}
\end{figure}

\begin{prop}[The $m=2$ Vertex Symmetry]
When $m = 2$, there are two paths $p_1$ and $p_2$ to the terminal vertex $t_1$ that turn exactly once. The map $\alpha$ sending these two paths to each other is an order 2 automorphism of $\GT_\lambda$.
\end{prop}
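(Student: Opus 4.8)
The plan is to reduce everything to the face lattice $\mathcal{F}(\gamlam)$ via Theorem~\ref{thm:ladders}, and then to exhibit $\GT_\lambda$ (for $m=2$) as a combinatorial \emph{join}, for which $\alpha$ becomes a transparent endpoint swap. First I would pin down the two paths. For $m=2$ the grid $\gamlam$ is the $a_1\times a_2$ rectangle from the origin to $t_1=(a_1,a_2)$, together with the two forced boundary ``tails'' running to $t_0$ and $t_m$. The only lattice paths from the origin to $t_1$ turning exactly once are $p_1$, running along the bottom and then up the right side, and $p_2$, running up the left and then along the top. These are precisely the vertices whose fillings are constantly the minimum, respectively the maximum, value; equivalently they are the bottom $\hat 0$ and top $\hat 1$ when $\GT_\lambda$ is viewed as the (marked) order polytope $\mathcal{O}(L)$ of the poset $L$ of free coordinates, which is the product of two chains $[a_1]\times[a_2]$.

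The heart of the argument is to show that $\GT_\lambda$ is combinatorially the join $C * [p_1,p_2]$ of a segment $[p_1,p_2]$ with a lower-dimensional polytope $C$; equivalently, that $\GT_\lambda$ is an iterated pyramid with the two apexes $p_1,p_2$. To see this I would analyze the facets. Since $L=[a_1]\times[a_2]$ is a distributive lattice it has a unique minimal and a unique maximal element, so among the facets $\{x_p=x_q\}$ (from covers), $\{x_{\hat 0_L}=\min\}$, and $\{x_{\hat 1_L}=\max\}$ there is exactly one ``bottom'' facet and exactly one ``top'' facet. The all-minimum vertex $p_1$ is tight on every facet except the single top facet $F=\{x_{\hat 1_L}=\max\}$, so $\GT_\lambda=\operatorname{pyr}(p_1,F)$. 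Repeating the computation inside $F\cong\mathcal{O}(L\setminus\hat 1_L)$, whose poset again has a unique minimum, shows that the all-maximum vertex $p_2$ is an apex of $F$, so $F=\operatorname{pyr}(p_2,C)$ with $C=\{x_{\hat 0_L}=\min,\ x_{\hat 1_L}=\max\}$. By associativity of the join, $\mathcal{F}(\GT_\lambda)\cong \mathcal{F}(C)*\mathcal{F}(\Delta^1)$, where $\Delta^1$ is the segment with vertices $p_1,p_2$. I would phrase the facet bookkeeping directly in ladder-diagram terms (a facet is a diagram with one fewer bounded region, and it contains the vertex $p_i$ iff it contains the whole path $p_i$), using the order-polytope picture only to organize which diagrams occur.

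With the join structure in hand, $\alpha$ is immediate: every face of a join $C*\Delta^1$ is $G\sqcup S$ for a face $G$ of $C$ and a (possibly empty) face $S$ of $\Delta^1$, and the automorphism of $\Delta^1$ swapping its two vertices induces the map fixing each $G$ and interchanging $p_1\leftrightarrow p_2$ inside $S$. This map preserves inclusions and squares to the identity, so it is an order-$2$ automorphism of $\mathcal{F}(\gamlam)\cong\mathcal{F}(\GT_\lambda)$ sending $p_1$ to $p_2$; this is $\alpha$. As a sanity check, when $a_1=a_2=2$ one gets $C=\mathcal{O}(\{b,c\})$, a square, and $\square*\Delta^1$ has automorphism group $D_4\times\Z_2$, matching Theorem~\ref{thm:autom-m=2} with $\alpha$ generating the $\Z_2$ factor.

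I expect the main obstacle to be the second step, proving the iterated-pyramid (join) structure. The delicate point is the facet analysis: one must verify in the ladder-diagram model exactly which codimension-one faces fail to contain $p_1$ (respectively $p_2$), and check that there is precisely one of each, so that the two apex conditions hold. Once the join decomposition is established, the involution and its order are purely formal.
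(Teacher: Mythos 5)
Your proof is correct, but it takes a genuinely different and more structural route than the paper's. The paper's own proof is a one-line assertion that $\alpha^2=1$ and that $\alpha$ ``clearly preserves inclusion,'' with only a trailing remark that $p_1$ and $p_2$ are each joined to every other vertex by an edge; it never spells out how swapping two paths acts on an arbitrary face (most ladder diagrams contain neither $p_1$ nor $p_2$ among their constituent paths, or contain both as subgraphs), so well-definedness of $\alpha$ on the whole face lattice is left implicit. Your join decomposition $\GT_\lambda\cong C*\Delta^1$ supplies exactly that missing structure, and the key facet computation checks out: the only interior edge of $\gamlam$ lying on $p_1$ is $\{(a_1,0),(a_1,1)\}$, and the only one lying on $p_2$ is $\{(0,a_2),(1,a_2)\}$, so each $p_i$ misses exactly one facet; the standard fact that a vertex on all facets but one is a pyramid apex then gives $\GT_\lambda=\mathrm{pyr}(p_1,F_1)$ and $F_1=\mathrm{pyr}(p_2,F_1\cap F_2)$ (note $F_1\cap F_2$ is indeed a facet of $F_1$ because $F_2$ is a facet of the pyramid containing the apex $p_1$), hence the join and the canonical involution $\alpha$. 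Your order-polytope bookkeeping with $L=[a_1]\times[a_2]$ is consistent with the ladder-diagram count, though distributivity is irrelevant --- only the uniqueness of the minimal and maximal elements of $L$ is used. What your approach buys is a fully specified $\alpha$ together with a conceptual explanation of the $\Z_2$ factor in Theorem~\ref{thm:autom-m=2}; what it costs is the facet analysis, which the paper sidesteps by fiat. The only places that deserve an extra sentence are the identification $F\cong\mathcal{O}(L\setminus\hat 1_L)$ and the apex-of-a-facet step, both routine.
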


\begin{proof}
Similar to the proofs above, $\alpha^2=1$ and it clearly preserves inclusion. Therefore, it is an automorphism of $\mathcal{F}(\GT_\lam)$.
\end{proof}

\begin{figure}[htp]\label{fig:twosymmetry2}
\includegraphics[scale = 0.12]{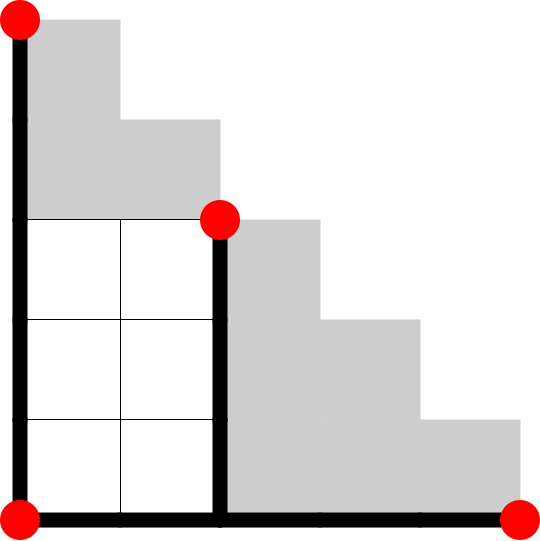}
\qquad
\includegraphics[scale = 0.12]{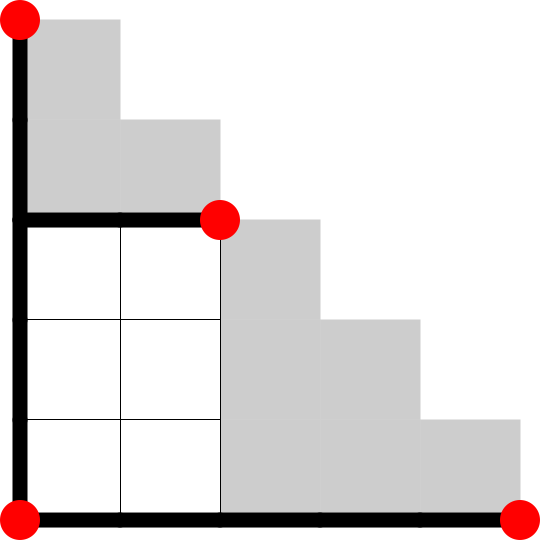}
\caption{Paths $p_1$ and $p_2$.}
\end{figure}

The ladder diagram formed by $p_1$ is a vertex of $\GT_\lam$ that is connected to every other vertex by an edge, and similarly for $p_2$. The map $\alpha$ simply exchanges these two vertices.

It is clear that the group formed by these possible generators is contained in $\Aut(\GT_\lam)$. Note that if $m = 1$, then the polytope is a single point with only the trivial automorphism. If $m = 2$ and either $\lam = (1,2^{a_2})$ or $\lam = (1^{a_1}, 2)$, then the polytope is a simplex and its automorphism group is the symmetric group. In all other cases, either $m = 2$ and $a_1,a_2 \ge 2$, or $m \ge 3$. For both of these cases, we will describe the group generated by the automorphisms listed above and finish proving that this group is the entire automorphism group in Section~\ref{subsec:proving-autom-gp}. 

\begin{thm1.2}[$m$=2 Automorphisms]
Suppose $\lambda = (1^{a_1}, 2^{a_2})$ and $a_1,a_2 \ge 2$. If $a_1 = a_2 = 2$, then 
\begin{equation*}
\Aut(\GT_\lambda) \cong D_4 \times \Z_2.
\end{equation*}
Otherwise, 
\begin{equation*}
\Aut(\GT_\lambda) \cong D_4 \times \Z_2 \times \Z_2^{\delta_{a_1, a_2}}.
\end{equation*}
\end{thm1.2}

\begin{proof}[Proof of inclusion.]
We show that if $a_1 = a_2 = 2$, then $D_4 \times \Z_2 \subseteq \Aut(\GT_\lambda)$ and otherwise $D_4 \times \Z_2 \times \Z_2^{\delta_{a_1, a_2}} \subseteq \Aut(\GT_\lambda)$.

Suppose $a_1 \neq a_2$. Collecting the generators applicable in this case from previous propositions, we have the subgroup of automorphisms $\langle \mu, \mu_1, \tau, \alpha \rangle$, and note these generators satisfy the following relations $\mu^2 = \mu_1^2 = \tau^2 = \alpha^2 = 1, \mu \tau = \tau \mu_1$ and all other variables commute. The subgroup $\langle \tau \mu , \mu \rangle$ is $D_4$. The generator $\alpha$ commutes with all other generators, so the resulting group is isomorphic to $D_4 \times \Z_2$. 

Assuming $a_1 = a_2 \geq 3$, we have the subgroup described in the previous case $a_1 \neq a_2$, but with the additional generator $\rho$. Note $\rho$ commutes with all of these generators from the previous case, so the resulting group is $D_4 \times \Z_2^2$.
When $a_1 = a_2 = 2$, note that $\rho = \tau$, resulting in a smaller subgroup.
\end{proof}

We take care in specifying how to write down the composition of elements of $\Aut(\GT_\lam)$ as a tuple. This is not as straightforward as in the $m=2$ case because the Flip Symmetry does not act locally and does not commute with the other symmetries.

In the statement of Theorem~\ref{thm:autom-m>=3}, elements of $\Aut(\GT_\lam)$ are tuples $(\sigma_1,\sigma_2,z_1,\ldots,z_{r_1},z_{r_1+1},z_{r_1+2})$. Here $\sigma_1 \in S_{a_2}^{\delta_{1,a_1}}$ and $\sigma_2 \in S_{a_{m-1}}^{\delta_{1,a_{m}}}$ correspond to the Symmetric Group Symmetry and $z_1,\ldots,z_{r_1} \in \Z_2$ correspond to the $k$-Corner Symmetries. Finally $z_{r_1+1} \in \Z_2$ corresponds to the Corner Symmetry, $z_{r_1+2} \in \Z_2$ corresponds to the Flip Symmetry. Let $g \in S_{a_2}^{\delta_{1,a_1}} \times S_{a_{m-1}}^{\delta_{1,a_{m}}} \times \Z_2^{r_1+1}$ be such that $(\sigma_1,\sigma_2,z_1,\ldots,z_{r_1},z_{r_1+1},z_{r_1+2}) =: (g,z')$. Then $(g,z_{r_1+2}) \cdot (g',z_{r_1+2}') = (g\varphi(z_{r_1+2})(g'),z_{r_1+2}z_{r_1+2}')$ where $\varphi(0)$ is the identity map on $S_{a_2}^{\delta_{1,a_1}} \times S_{a_{m-1}}^{\delta_{1,a_{m}}}$ and $\varphi(1)$ is the map sending $(\sigma_1, \sigma_2, z_1, \ldots, z_{r_1}, z_{r_1+1}) \mapsto (\sigma_2, \sigma_1, z_{r_1}, \ldots, z_1, z_{r_1+1})$. This is formally stated in Theorem~\ref{thm:autom-m>=3}.

\begin{thm1.3}[$m \geq 3$ Automorphisms]
Suppose $\lam = 1^{a_1}\ldots m^{a_m}$ and $m \ge 3$. Let $r_1$ be the number of $k$ such that $a_k, a_{k+1} \geq 2$. Let $r_2 = 1$ if $\lam = \lam'$ and let $r_2 = 0$ otherwise. Then 
\begin{equation*}
\Aut(\GT_\lam) \cong (S_{a_2}^{\delta_{1,a_1}} \times S_{a_{m-1}}^{\delta_{1,a_{m}}} \times \Z_2^{r_1+1}) \ltimes_\varphi \Z_2^{r_2}
\end{equation*}
where if $r_2 = 1$, then $\varphi: \Z_2 \to \Aut(S_{a_2}^{\delta_{1,a_1}} \times S_{a_{m-1}}^{\delta_{1,a_{m}}} \times \Z_2^{r_1+1})$ sends the nonidentity element of $\Z_2$ to the map sending $(\sigma_1, \sigma_2, z_1, \ldots, z_{r_1}, z_{r_1+1}) \mapsto (\sigma_2, \sigma_1, z_{r_1}, \ldots, z_1, z_{r_1+1}).$
\end{thm1.3}

\begin{proof}[Proof of inclusion.]
We show that $(S_{a_2}^{\delta_{1,a_1}} \times S_{a_{m-1}}^{\delta_{1,a_{m}}} \times \Z_2^{r_1+1}) \ltimes_\varphi \Z_2^{r_2} \subseteq \Aut(\GT_\lam)$.

By the previous propositions, our generators are $\mu, \mu_1, \ldots, \mu_{m-1}, S_{a_2}, S_{a_{m-1}}$ with $S_{a_2}$ or $S_{a_{m-1}}$ possibly omitted. Whichever symmetries are present commute since they act on disjoint sets of edges in the ladder diagrams. 

If $\lambda$ is reverse symmetric, we also have the generator $\rho$. Note that since $\rho$ flips every ladder diagram about $y = x$, $\rho$ satisfies the following commutation relations: $\rho \mu = \mu \rho$, $\rho \mu_i = \mu_{m-i} \rho$, and for $\sigma\in S_{a_2}, S_{a_{m-1}}$ ($a_2$ and $a_{m-1}$ must be equal for $\lambda$ to be reverse symmetric),$\rho \sigma = \sigma\rho$. These relations are enough to give a subgroup of the stated form.
\end{proof}

In the following sections, we finish the proofs of Theorems~\ref{thm:autom-m=2} and \ref{thm:autom-m>=3} by showing the group formed by our generators is the entire automorphism group by bounding the size of $\Aut(\GT_\lambda)$ by the size of the group formed by our generators. We will examine the action of any combinatorial automorphism on $\cal{F}(\gamlam)$ and apply the Orbit-Stabilizer theorem. In order to do so, we first develop ways to classify and partition the facets of $\GT_\lambda$.

\subsection{Chains of Facets}\label{subsec:facets}
In this section, we present a useful model for the facets of $\GT_\lam$, define the \emph{facet chains} of $\gamlam$, and discuss how an automorphism can act on facet chains. This line of reasoning is motivated by the following observation.

\begin{lem}\label{lem:facets}
An automorphism of $\GT_\lambda$ is determined by where it sends the facets of $\GT_\lambda$ or equivalently, where it sends the ladder diagrams of facets.
\end{lem}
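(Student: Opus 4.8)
The plan is to use the standard order-theoretic fact that, in the face lattice of a polytope, every face is the meet of the facets lying above it. First I would recall that $\mathcal{F}(\GT_\lambda)$ is the face lattice of a polytope and hence a genuine lattice, with bottom element the empty face and top element $\GT_\lambda$ itself; the facets are precisely the coatoms, that is, the faces covered by the top element. Since an automorphism of $\mathcal{F}(\GT_\lambda)$ is by definition an order automorphism of this poset, it must fix the top and bottom elements and restrict to a bijection on the set of coatoms. Thus any automorphism $\phi$ permutes the facets among themselves.

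Next I would invoke the key geometric fact that each proper face $F$ of a polytope is the intersection of the facets containing it, which in lattice language reads $F = \bigwedge\{G : G \text{ a facet}, \ F \le G\}$. Because an order isomorphism of a lattice automatically preserves meets — the meet being characterized purely order-theoretically as the greatest lower bound — we obtain
$$\phi(F) = \phi\Big(\bigwedge\{G \text{ facet} : F \le G\}\Big) = \bigwedge\{\phi(G) : G \text{ facet}, \ F \le G\}.$$
Hence $\phi(F)$ is completely determined by the images $\phi(G)$ of the facets $G \ge F$. Since $F$ was an arbitrary face, $\phi$ is determined by its action on the facets, which is exactly the claim.

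For the equivalent reformulation in terms of ladder diagrams, I would apply Theorem~\ref{thm:ladders}, which furnishes the isomorphism $\mathcal{F}(\GT_\lambda) \cong \mathcal{F}(\gamlam)$. Under this isomorphism the facets of $\GT_\lambda$, being the faces of dimension $d-1$, correspond to the ladder diagrams with exactly $d-1$ bounded regions. Consequently, specifying where an automorphism sends the facets is the same as specifying where it sends their ladder diagrams, and the two formulations of the lemma coincide.

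I do not expect a serious obstacle here, as the only non-formal ingredient is the classical fact that every face of a polytope is an intersection of the facets containing it. The one point that warrants care in the write-up is the observation that a poset automorphism of a lattice is automatically a lattice automorphism, so that it commutes with the meet operation used in the displayed equation above; this is what licenses pushing $\phi$ through the meet.
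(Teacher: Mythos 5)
Your proposal is correct and follows essentially the same route as the paper: the paper's proof likewise rests on the single fact that every face of a polytope is an intersection of the facets containing it, so that specifying the images of facets determines the image of every face. Your additional observations (facets are the coatoms, an order automorphism of the face lattice preserves meets, and the transfer to ladder diagrams via Theorem~\ref{thm:ladders}) simply make explicit what the paper leaves implicit.
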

\begin{proof}
This follows from the fact that for a general polytope $P$, every face of $P$ can be written as an intersection of the facets of $P$.  Thus specifying the image of each facet suffices to specify the image of any face.
\end{proof}

\begin{defn}
We define the \emph{interior edges} of $\gamlam$ to be all edges of the form $\{(s_j, n - s_{j+1}), (s_j, n - s_{j+1} + 1)\}$ or $\{(s_j, n - s_{j+1}), (s_j+1, n - s_{j+1})\}$ and all edges lying inside $\gamlam$. All other edges of $\gamlam$ are considered \emph{boundary edges}.
\label{defn:interior-edges}
\end{defn}

\begin{figure} [htp]
\includegraphics[scale=0.12]{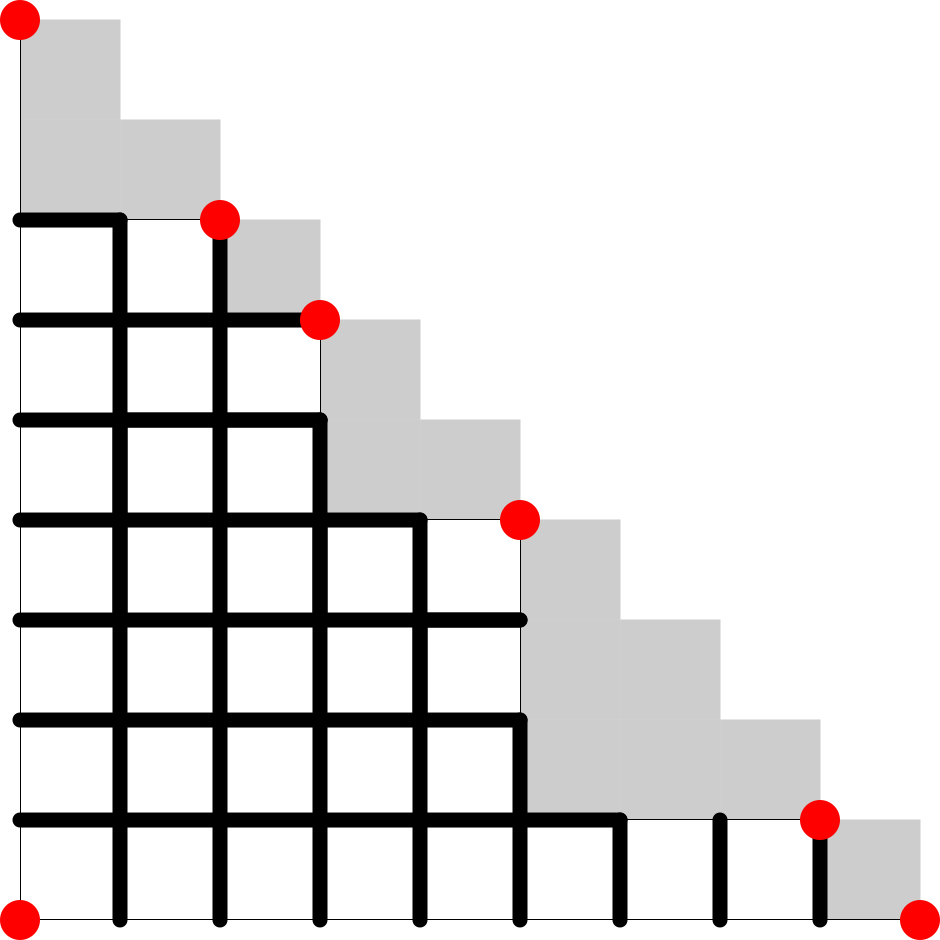}
\qquad
\includegraphics[scale=0.12]{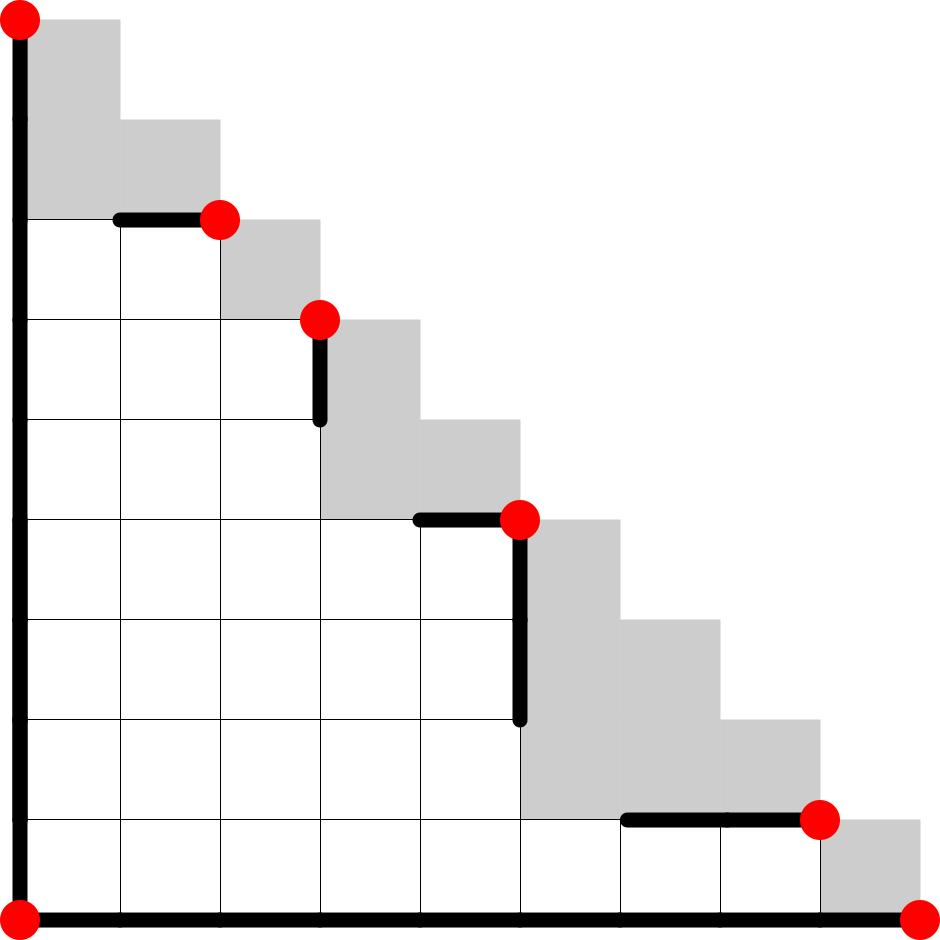}
\caption{Left: interior edges of $\gamlam$. Right: boundary edges of $\gamlam$.}
\label{fig:interior-edges}
\end{figure}

\begin{prop}
\label{fact:facets-int-edges}
The facets of $\GT_\lambda$ are in bijection with the interior edges of $\gamlam$. 
\end{prop}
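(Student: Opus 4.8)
The plan is to use the grading of $\mathcal{F}(\gamlam)$ by number of bounded regions together with the isomorphism $\mathcal{F}(\GT_\lambda) \cong \mathcal{F}(\gamlam)$ of Theorem~\ref{thm:ladders}. The whole polytope corresponds to the maximum element $\gamlam$, whose number of bounded regions equals $\dim \GT_\lambda = d$; hence the facets of $\GT_\lambda$ are exactly the ladder diagrams with $d-1$ bounded regions that are covered by $\gamlam$. I would prove the proposition by showing that the assignment $e \mapsto \gamlam \setminus \{e\}$, deleting a single interior edge, is a well-defined bijection from the interior edges onto this set of facets.

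First I would check that deleting one interior edge $e$ yields a facet, handling the two kinds of interior edges from Definition~\ref{defn:interior-edges} separately. If $e$ lies strictly inside $\gamlam$, it separates two bounded cells, and deleting it merges them into a single region; both endpoints retain incident edges continuing North-East to terminal vertices, so conditions (1) and (2) for a ladder diagram still hold and the region count drops by exactly one. If $e$ is one of the special edges $\{(s_j, n-s_{j+1}), (s_j, n-s_{j+1}+1)\}$ or $\{(s_j, n-s_{j+1}), (s_j+1, n-s_{j+1})\}$, it sits at a reflex corner and borders exactly one bounded cell; deleting it opens that cell to the exterior, and the local picture at the reflex corner guarantees each endpoint still has a North-East continuation, so validity is again preserved and one region is lost. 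In both cases $\gamlam \setminus \{e\}$ is a facet, and distinct edges give distinct diagrams, so the map is injective.

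For surjectivity I would take an arbitrary facet $L$. Being a ladder diagram, $L$ is connected and carries its natural planar embedding, so its bounded-region count equals $E(L) - V(L) + 1$; comparing with $\gamlam$ forces $\bigl(E_0 - E(L)\bigr) - \bigl(V_0 - V(L)\bigr) = 1$. Since a valid ladder diagram must contain every terminal vertex and the forced edges along the unique North-East paths reaching them, no such vertex or edge may be deleted; a short case analysis then shows that deleting any vertex (which requires deleting all its incident edges) either disconnects a terminal, strands a surviving edge off every North-East path, or lowers the region count by more than one, contradicting that $L$ is a facet. Hence $V(L) = V_0$ and exactly one edge $e$ is removed, so $L = \gamlam \setminus \{e\}$. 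The same validity analysis forces $e$ to be interior: a forced boundary edge cannot be deleted without disconnecting a terminal, and a convex-corner boundary edge cannot be deleted without leaving an adjacent edge with no North-East continuation, violating condition (2). This exhibits $L$ as the image of an interior edge and completes the bijection.

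I expect the crux to be the validity bookkeeping in the surjectivity argument — in particular, verifying the North-East-path condition (2) as edges are deleted near terminal and convex-corner vertices, and ruling out vertex deletions. This local geometry is precisely what separates the removable interior edges (true interior edges and reflex corners bordering a cell) from the non-removable boundary edges, so it is where the combinatorics of Definition~\ref{defn:interior-edges} must be matched carefully against the deletion operation.
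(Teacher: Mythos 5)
Your proof establishes exactly the same bijection as the paper --- a facet corresponds to $\gamlam$ with a single interior edge removed --- so the approach is essentially identical; the paper simply asserts both directions in three sentences, while you supply the verification (the local validity and region-count checks, and the Euler-characteristic argument ruling out the deletion of more than one edge or of any vertex). This is correct and matches the paper's intent, so no changes are needed.
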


\begin{proof}
Since ladder diagrams in $\mathcal{F}(\gamlam)$ are graded by number of bounded regions, any facet will correspond to a ladder diagram with all possible edges except one which will be an interior edge. The bijection is given by mapping a facet $F$ to the single interior edge not contained in its associated ladder diagram. For the other direction, note that any ladder diagram missing a single interior edge is a valid ladder diagram and represents a facet.
\end{proof}

We will often represent a facet $F \in \GT_\lam$ by its corresponding interior edge which we denote by $e(F)$. A face is not contained in facet $F$ iff its ladder diagram contains edge $e(F)$.

\begin{figure}[htp]
\includegraphics[scale = 0.12]{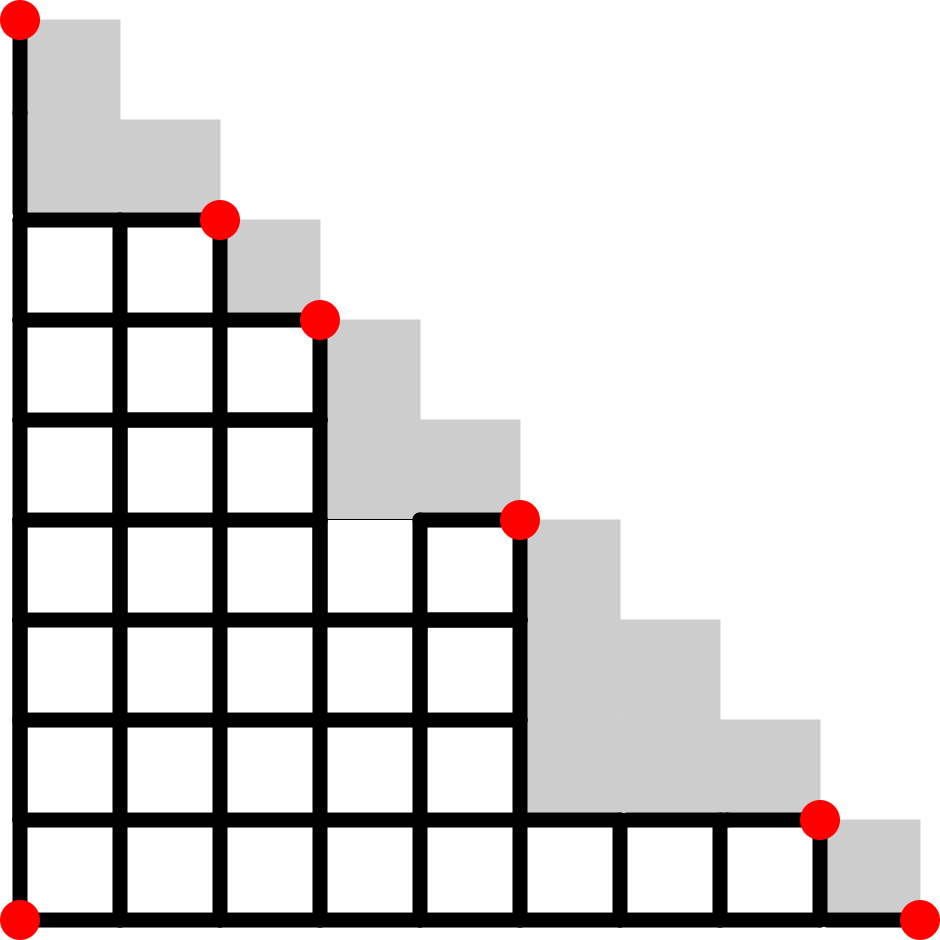}
\qquad
\includegraphics[scale = 0.12]{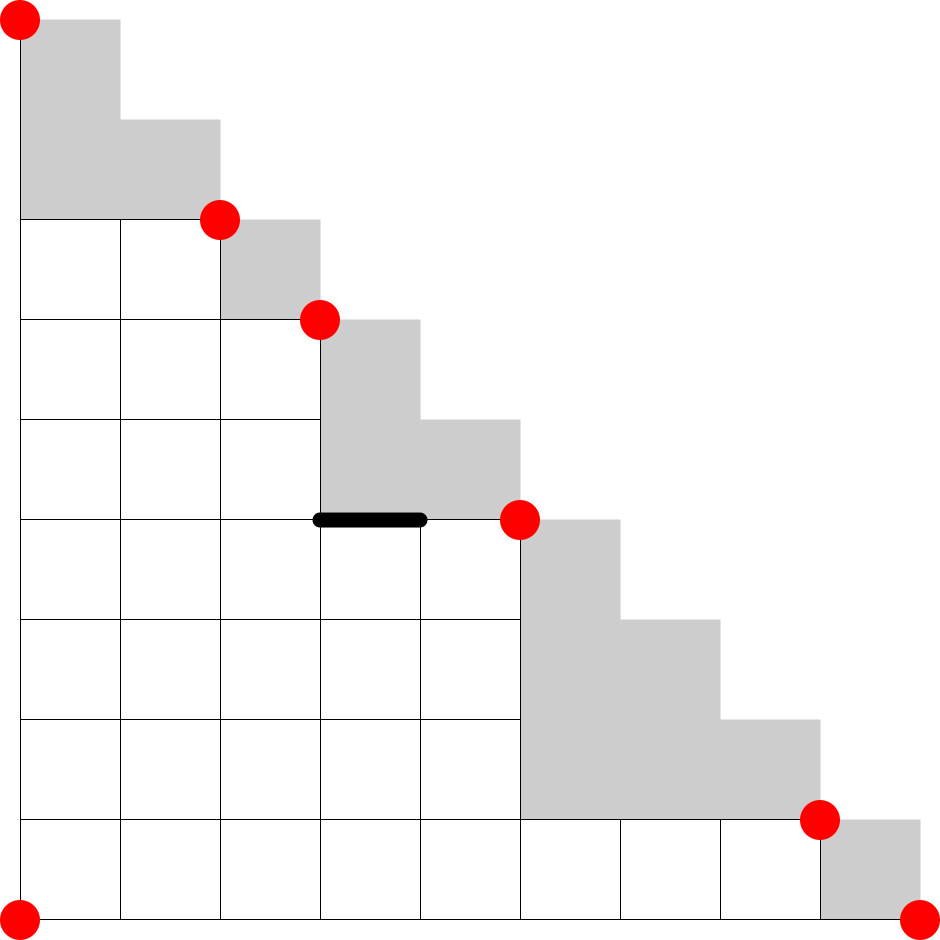}
\caption{Left: ladder diagram of a facet. Right: complement of ladder diagram.}
\label{fig:facet-diag}
\end{figure}

\begin{defn}
Two facets are called \emph{dependent} if their intersection is a $d-3$ dimensional face.
\end{defn}

Given two facets $F_1$ and $F_2$, a necessary condition for $F_1$ and $F_2$ to be dependent is for them to be arranged as shown in Figure~\ref{fig:d-3dim}. As shown in Figure~\ref{fig:nondependent}, this is not always sufficient if there are already forced equalities amongst the $4$ shaded coordinates.

\begin{figure}[h!]
\includegraphics[scale = 0.14]{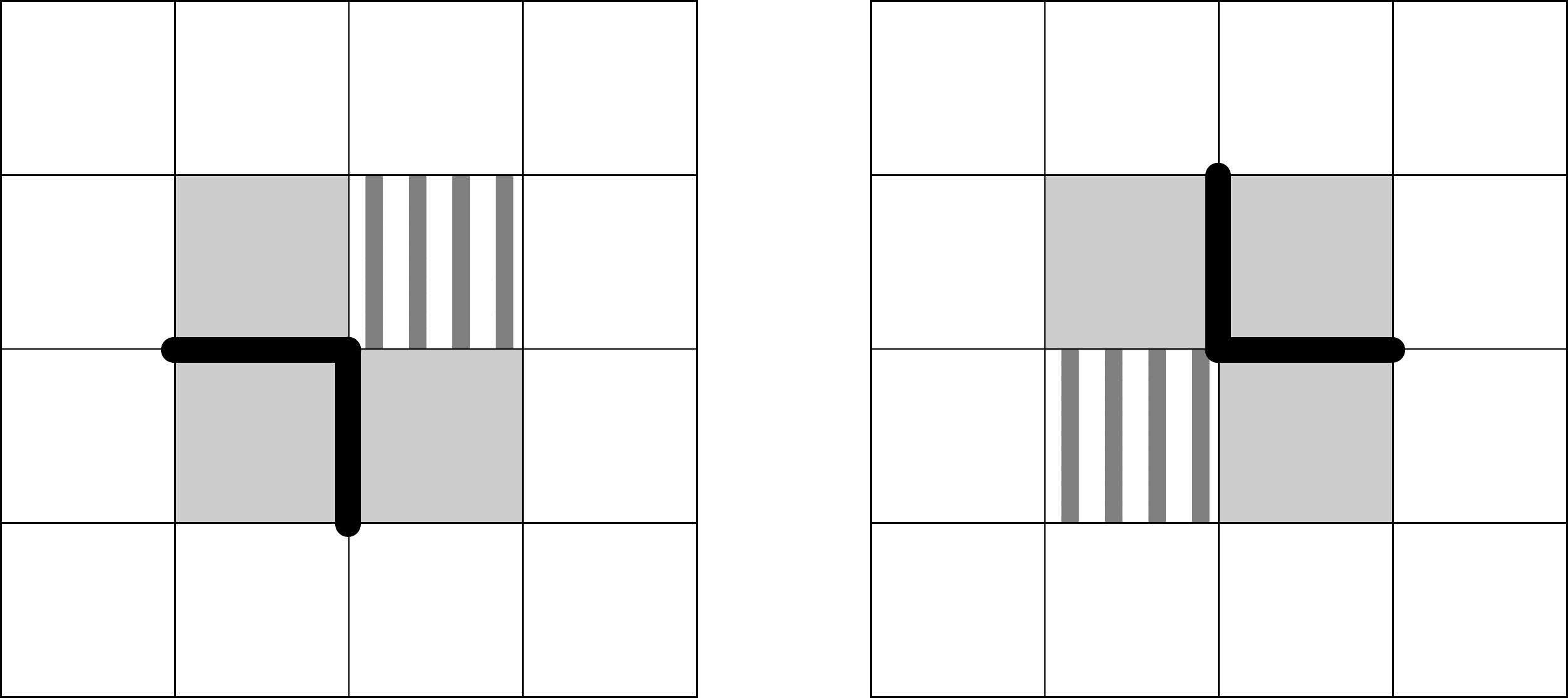}
\caption{The gray boxes indicate coordinates $x_{i,j}$ that are equal on each facet. The dashed box indicates the coordinate forced to be equal to the other three.}
\label{fig:d-3dim}
\end{figure}

\begin{figure}[h!]
\includegraphics[scale = 0.12]{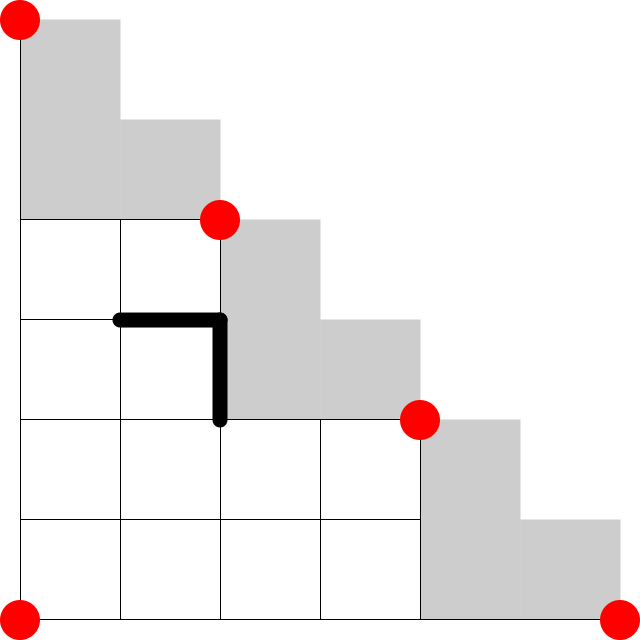}
\qquad
\includegraphics[scale = 0.12]{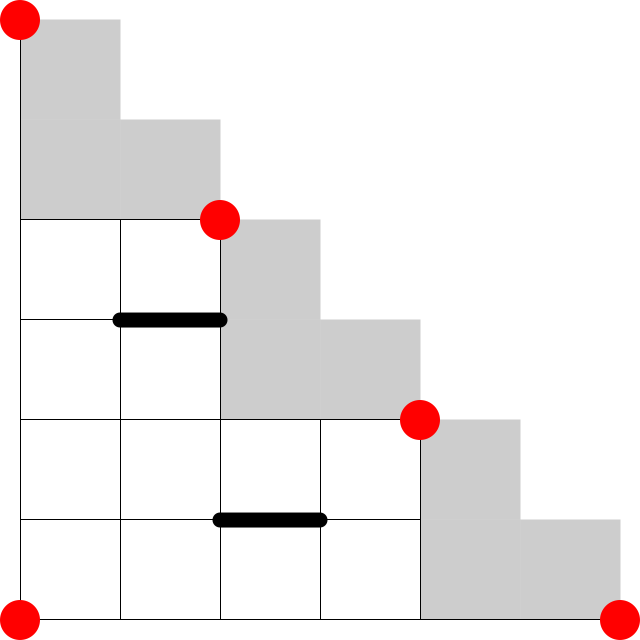}
\caption{Examples of facets that are not dependent.}
\label{fig:nondependent}
\end{figure}

Facet dependencies naturally partitions the facets of $\GT_\lambda$ into \emph{facet chains} which are easy to represent visually.

\begin{defn}[Facet Chains]
A \emph{facet chain} $C = (F_1,\ldots, F_l)$ of length $l$ is an ordered list of facets $F_1,\ldots,F_l$ such that $F_j$ is dependent on $F_{j-1}$ and facets $F_1,F_l$ are not dependent on any facets not in $C$. Visually, a chain is a set of edges $e(F_j)$ of $\gamlam$ forming a zig-zag pattern. The facets $F_1,\ldots,F_l$ are ordered such that $e(F_1)$ has the smallest $x$-coordinate.
\end{defn}

Let $\mathcal{C}$ denote the set of facet chains of $\gamlam$. These chains partition the interior edges of $\gamlam$. After we have the partition, it is then natural to study the relations between these facet chains. We define the notion of adjacency between facet chains as follows.

\begin{figure}[h!]
\includegraphics[scale = 0.12]{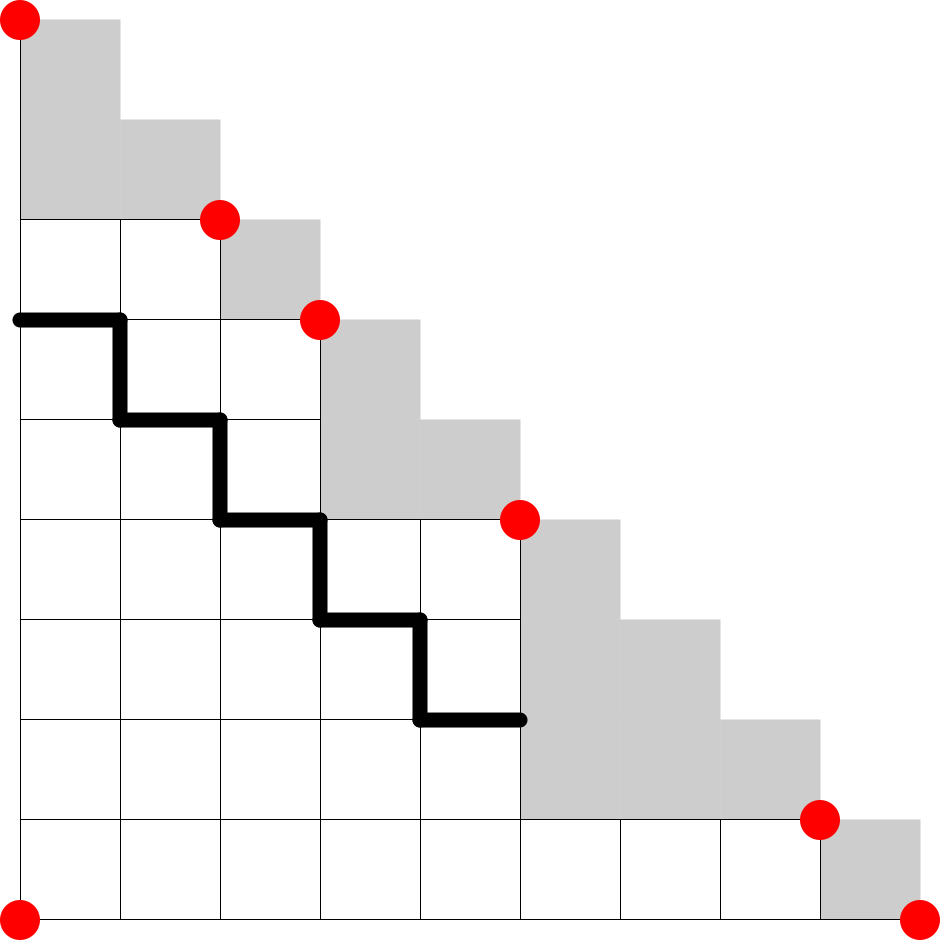}
\qquad
\includegraphics[scale = 0.12]{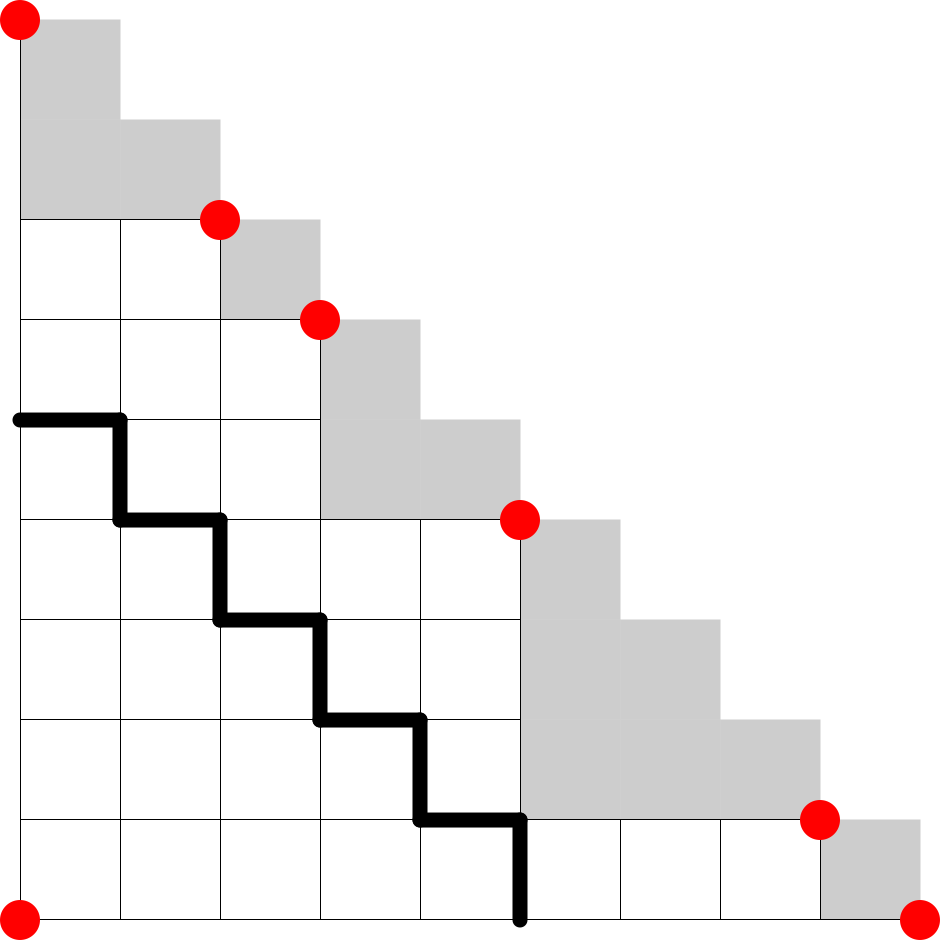}
\caption{Two adjacent facet chains.}
\label{fig:facet-chains}
\end{figure}

\begin{defn}[Adjacent Chains]
Two different chains $C, C' \in \mathcal{C}$ are \emph{adjacent} if there exists $F_1, F_2 \in C$ and $J_1, J_2 \in C'$ such that $F_1 \bigcap F_2 = J_1 \bigcap J_2$. We say $C$ and $C'$ are adjacent at $k$ points if there are $k$ distinct sets of facets $F_1,F_2,J_1,J_2$ with $F_1 \bigcap F_2 = J_1 \bigcap J_2$.
\end{defn}

Visually, two chains are adjacent iff one chain sits directly to the North-East of the other chain as shown in Figure~\ref{fig:facet-chains}.

We are now ready to study how each automorphism $\phi\in\Aut(\GT_{\lambda})$ acts on facet chains. Specifically, the action of an automorphism $\phi \in \Aut(\GT_\lam)$ can be extended to sets of facets. For any sets of facets $X_1$ and $X_2$, we say $\phi(X_1) = X_2$ if the restriction of $\phi$ to $X_1$ is a bijection between sets $X_1$ and $X_2$. In particular, we will often abuse notation and write $\phi(C_1) = C_2$, thinking of chains $C_1$ and $C_2$ as sets of facets. We now state a few simple lemmas that will be essential to our proofs of Theorems~\ref{thm:autom-m=2} and \ref{thm:autom-m>=3}.

\begin{lem}\label{lem:fcmapstofc}
Let $C$ be a facet chain and $\phi\in\Aut(\GT_{\lambda})$. Then $\phi(C)$ is a facet chain of the same length as $C$.
\end{lem}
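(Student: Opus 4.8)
The plan is to recast the statement entirely in terms of a single auxiliary graph and then observe that $\phi$ induces an automorphism of it. Let $G$ be the graph whose vertices are the facets of $\GT_\lambda$ and whose edges join pairs of dependent facets. By the way facet chains were defined, the vertex sets of the facet chains are exactly the connected components of $G$, and (from the zig-zag picture) each such component is a path. Consequently, to prove the lemma it suffices to show that $\phi$ restricts to an automorphism of $G$: a graph automorphism carries each connected component bijectively onto a connected component, so it preserves both the property of being a maximal path and the number of vertices. Re-ordering the image by increasing $x$-coordinate of the associated interior edges (Proposition~\ref{fact:facets-int-edges}) then exhibits $\phi(C)$ as a facet chain of the same length $l$ as $C$.

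The real content is therefore to check that $\phi$ preserves both the set of facets and the dependency relation. Since $\phi$ is by definition an automorphism of the face poset $\cal{F}(\GT_\lambda) \cong \cal{F}(\gamlam)$, and this poset is graded with rank equal to dimension (the $k$-dimensional faces are the rank-$k$ elements, as noted after Theorem~\ref{thm:ladders}), $\phi$ preserves the dimension of every face. In particular it permutes the codimension-one faces, so $\phi$ indeed restricts to a bijection on the vertex set of $G$. Moreover, $\cal{F}(\GT_\lambda)$ is a lattice in which the meet of two faces is their set-theoretic intersection (the largest face contained in both), and any order-isomorphism of a lattice preserves meets. Hence $\phi(F_1 \cap F_2) = \phi(F_1) \cap \phi(F_2)$ for any two facets $F_1, F_2$.

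Combining these two observations yields the dependency relation for free: two facets $F_1, F_2$ are dependent precisely when $\dim(F_1 \cap F_2) = d-3$, and
\[
\dim\big(\phi(F_1) \cap \phi(F_2)\big) = \dim\big(\phi(F_1 \cap F_2)\big) = \dim(F_1 \cap F_2),
\]
so $F_1, F_2$ are dependent if and only if $\phi(F_1), \phi(F_2)$ are. Thus $\phi$ is an automorphism of $G$, and the reduction in the first paragraph finishes the proof. The one point requiring care — and the step I expect to be the main obstacle — is pinning down that the facet chains coincide exactly with the connected components of $G$ and that these components are genuine paths (equivalently, that every facet is dependent on at most two others and that the dependency graph is acyclic). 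This is the geometric input behind the zig-zag description of facet chains, and it must be verified before the clean graph-theoretic argument can be invoked; everything else is a formal consequence of $\phi$ being a graded lattice automorphism.
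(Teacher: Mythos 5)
Your proposal is correct and follows essentially the same route as the paper, whose entire proof is the one-line observation that dependency between facets is a property of coatoms in the face lattice and is therefore preserved by any automorphism; you have simply spelled out the mechanism (rank and meets are preserved by a graded lattice automorphism, so $\dim(\phi(F_1)\cap\phi(F_2))=\dim(F_1\cap F_2)$) and packaged the conclusion via the dependency graph. Your closing caveat---that one must know the facet chains are exactly the connected components of that graph and that these are paths---is a fair point about the well-definedness of facet chains, but it is implicit in the paper's zig-zag description and is not an obstacle to the argument.
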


\begin{lem}\label{lem:admapstoad}
Let $C$ and $C'$ be adjacent facet chains and $\phi\in\Aut(\GT_\lambda)$. Then $\phi(C)$ and $\phi(C')$ are also adjacent facet chains.
\end{lem}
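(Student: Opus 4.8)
The plan is to exploit the fact that a combinatorial automorphism is, by definition, an order-isomorphism of the face lattice $\mathcal{F}(\GT_\lambda)$, and that any order-isomorphism of a lattice automatically preserves the meet operation. Since the intersection of two faces of a polytope is exactly their meet in the face lattice, this means $\phi$ commutes with taking intersections of faces: for any two faces $A$ and $B$ we have $\phi(A \cap B) = \phi(A) \cap \phi(B)$. Adjacency of chains is phrased entirely in terms of an equality of such intersections, so once we have this commutation property the lemma will follow by direct substitution.

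First I would record the bookkeeping facts. By Lemma~\ref{lem:fcmapstofc}, both $\phi(C)$ and $\phi(C')$ are facet chains. They are moreover distinct: the chains of $\gamlam$ partition the facets, so $C$ and $C'$ are disjoint as sets of facets, and since $\phi$ restricts to a bijection on facets, $\phi(C)$ and $\phi(C')$ remain disjoint and hence different. I would also note that $\phi$ preserves the grading of $\mathcal{F}(\GT_\lambda)$ by dimension, so it carries facets to facets; thus each $\phi(F)$ appearing below is a genuine facet.

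Next I would unwind the definition of adjacency. Since $C$ and $C'$ are adjacent, there exist facets $F_1, F_2 \in C$ and $J_1, J_2 \in C'$ with $F_1 \cap F_2 = J_1 \cap J_2$. Applying $\phi$ and using that it commutes with intersections gives
\[
\phi(F_1) \cap \phi(F_2) = \phi(F_1 \cap F_2) = \phi(J_1 \cap J_2) = \phi(J_1) \cap \phi(J_2).
\]
Because $\phi(F_1), \phi(F_2) \in \phi(C)$ and $\phi(J_1), \phi(J_2) \in \phi(C')$, this equality is precisely the coincidence of intersections required by the definition of adjacency, so $\phi(C)$ and $\phi(C')$ are adjacent.

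The only step warranting care — and the one I would justify explicitly — is the claim that $\phi$ commutes with intersections of faces. This rests on two standard facts: the face poset $\mathcal{F}(\GT_\lambda)$ is a lattice (as for any polytope), and an automorphism of a finite lattice, being an order-isomorphism, necessarily preserves meets; combined with the observation that the lattice-theoretic meet of two faces coincides with their set-theoretic intersection, this yields the commutation property. Everything else is a formal substitution, so beyond cleanly stating this preservation property I do not anticipate any real obstacle.
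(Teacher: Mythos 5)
Your proof is correct and takes essentially the same route as the paper, which likewise observes that adjacency is a coatom-level property of the face lattice preserved by any automorphism; you simply make explicit the key point that an order-isomorphism of the face lattice preserves meets, hence intersections of facets. Your additional bookkeeping (distinctness of $\phi(C)$ and $\phi(C')$ via the partition into chains, and invoking Lemma~\ref{lem:fcmapstofc}) is a welcome elaboration of what the paper leaves implicit.
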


\begin{proof}[Proof of Lemma~\ref{lem:fcmapstofc} and \ref{lem:admapstoad}]
Dependency between facets and adjacency between facet chains can be realized as properties of coatoms in the face lattice of the GT polytope. Thus, the lemmas followed directly from the definition of an automorphism.
\end{proof}

Given a notion of adjacency between facet chains, it is natural to examine the adjacency graph and how an automorphism can act on it. 

\begin{defn}(Adjacency Graph)
For any $\lam$, let $\cal{G}_\lam$ be the adjacency graph of the chains of $\gamlam$ with length $\ge 2$. More precisely, the nodes of $\cal{G}_\lam$ are the chains in $\cal{C}$ of length $\ge 2$ and there is an edge between two nodes iff their corresponding chains are adjacent. Note that $\cal{G}_\lam$ is connected. We make $\cal{G}_\lam$ a rooted planar tree by letting its root be $C_0$ and giving its nodes the ordering from their corresponding chains in $\gamlam$.
\label{def:adj-graph}
\end{defn}

\begin{figure}[h!]
\includegraphics[scale=0.1]{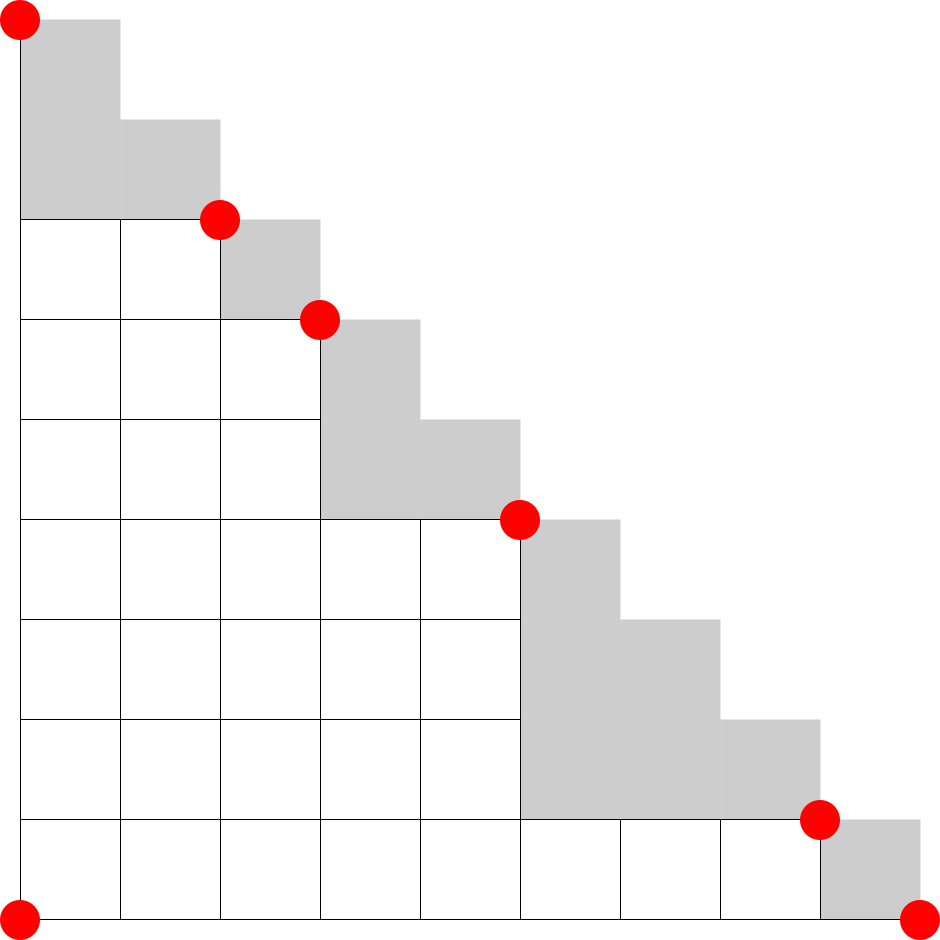}
\includegraphics[scale=0.045]{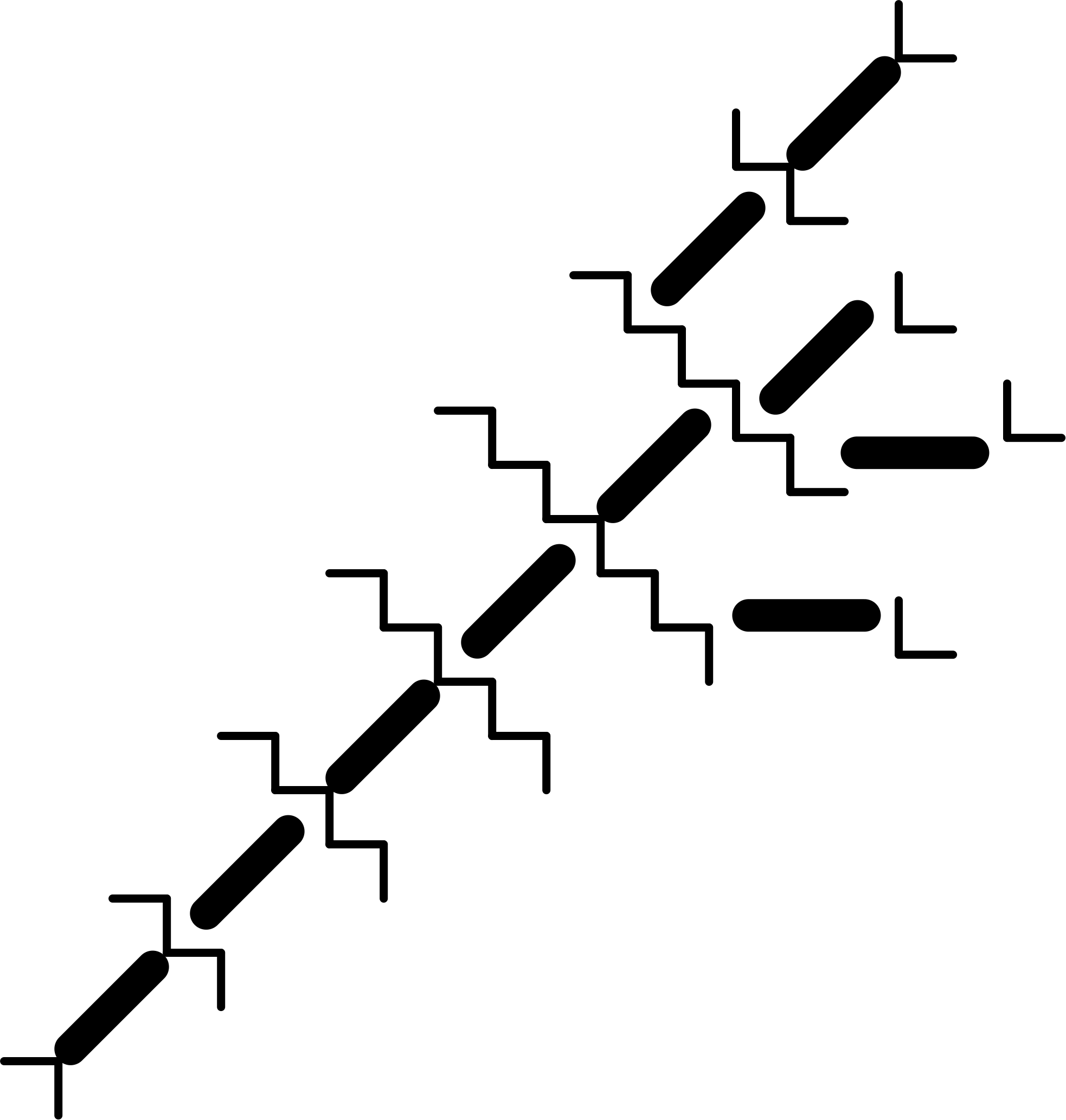}
\qquad
\includegraphics[scale=0.1]{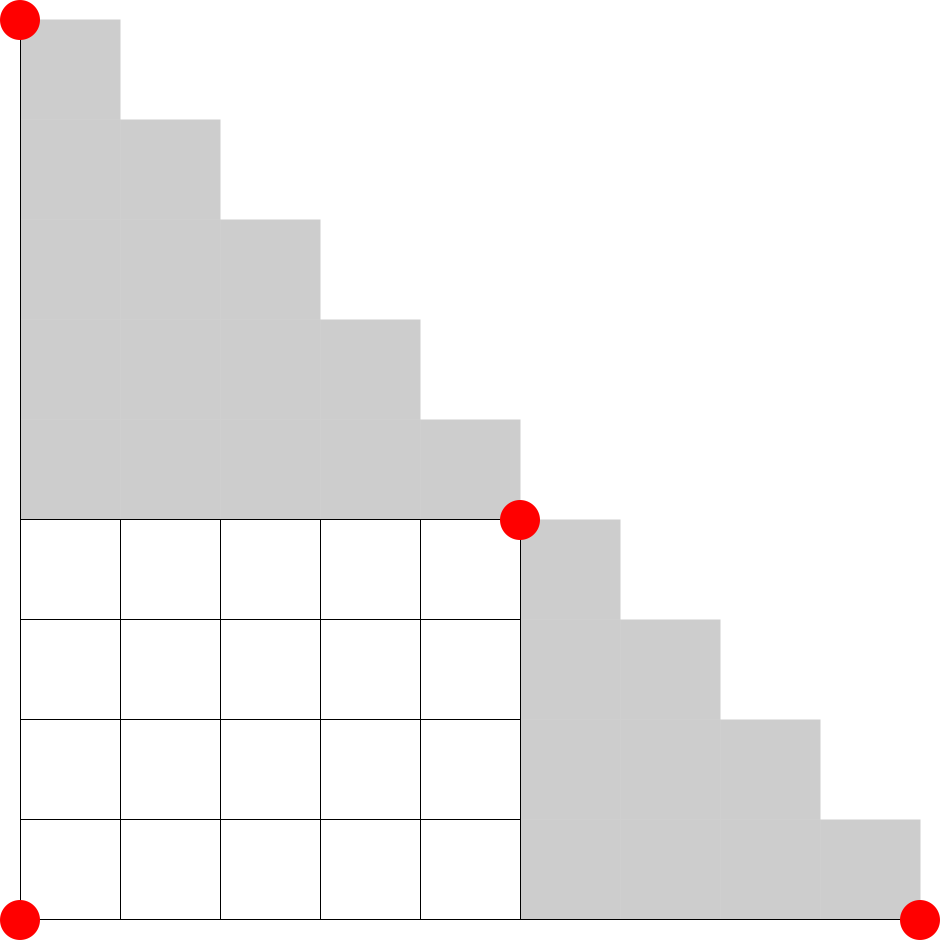}
\includegraphics[scale=0.05]{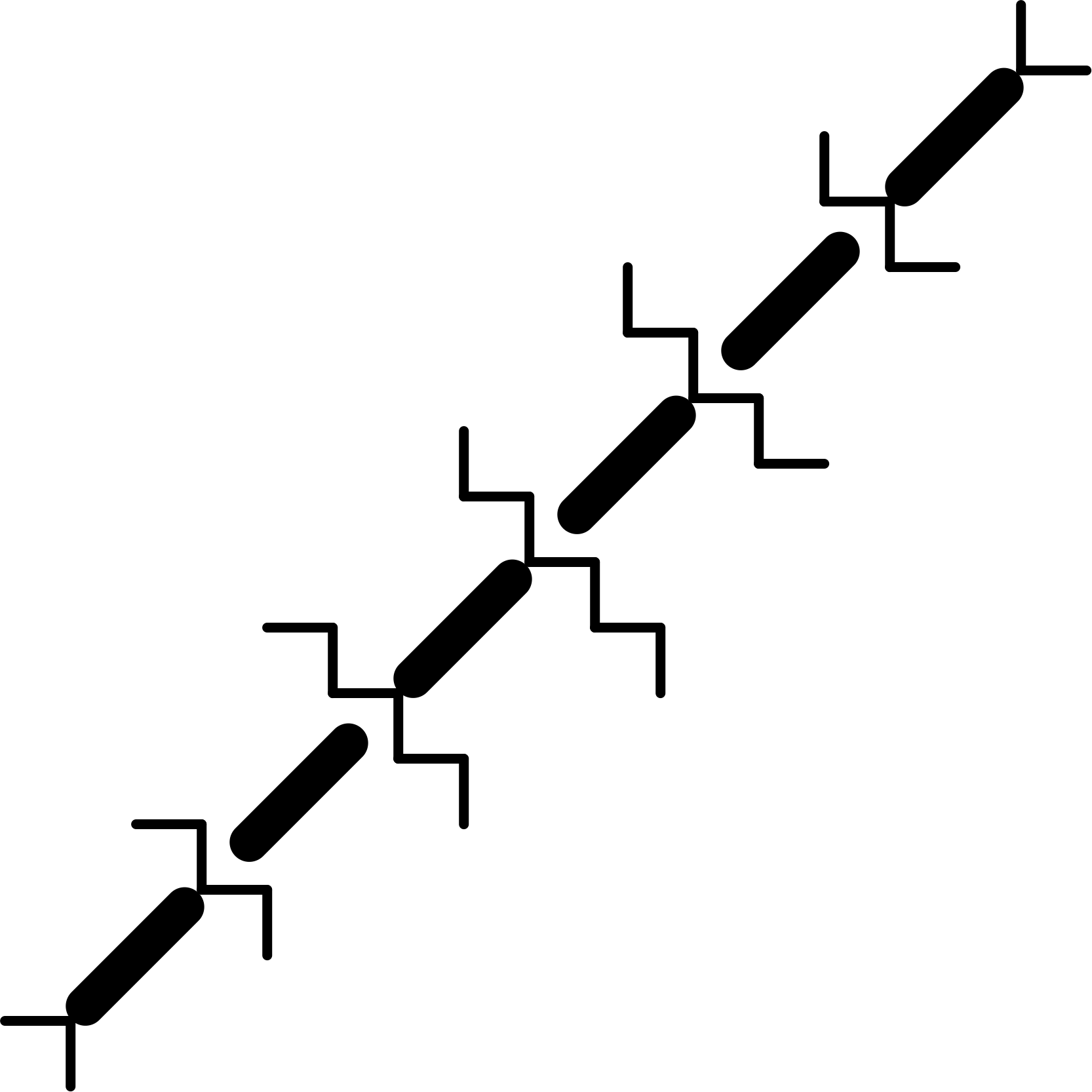}
\caption{Left: $\lambda=(1^2,2^1,3^2,4^3,5^1)$ and its adjacency graph. Right: $\lambda=(1^5,2^4)$ and its adjacency graph.}
\label{fig:adjacency-graph-3}
\end{figure}

Note that the length $2$ chains are exactly the chains with less than two neighbors in $\cal{G}$ since longer chains will have an adjacent chain above or below. Thus, the length $2$ chains are the leaves of $\cal{G}_\lam$. 

Unless specified otherwise, when talking about the action of an automorphism on nodes of $\cal{G}_\lam$, we are treating chains as sets of facets so we will not specify how the automorphism acts on the facets of chains. So if we say that an automorphism fixes a node of $\cal{G}_\lam$, we are not specifying whether the automorphism flips the chain.

\begin{lem}
Given facet chains $C = (F_1,\ldots,F_k)$ and $C' = (F_1',\ldots,F_k')$ and $\phi\in\Aut(\GT_\lam)$ such that $\phi(C)=C'$, either $\phi(F_i) = F_i'$ for all $1 \le i \le k$ or $\phi(F_i) = F_{k+1-i}'$ for all $1 \le i \le k$.
\label{lem:chain-flip}
\end{lem}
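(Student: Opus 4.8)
The plan is to reduce the statement to an elementary fact about automorphisms of path graphs. The crucial input is that dependency between facets is an intrinsic property of the face lattice $\cal{F}(\gamlam)$: as recorded in the proof of Lemmas~\ref{lem:fcmapstofc} and \ref{lem:admapstoad}, two facets are dependent exactly when their meet in the face lattice has codimension $3$, and this rank condition is preserved by any $\phi \in \Aut(\GT_\lam)$. Consequently $\phi$ carries dependent pairs of facets to dependent pairs, so the restriction of $\phi$ to the facets of $C$ is a bijection onto the facets of $C'$ that respects the dependency relation. Since $\phi(C)=C'$, Lemma~\ref{lem:fcmapstofc} already guarantees the two chains have the same length $k$, which is consistent with the indexing in the statement.

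First I would pin down the dependency relation internal to a single chain. Writing $G_C$ for the graph on vertex set $\{F_1,\ldots,F_k\}$ with an edge between two facets precisely when they are dependent, I claim $G_C$ is the path $F_1 - F_2 - \cdots - F_k$; that is, $F_i$ and $F_j$ are dependent if and only if $|i-j|=1$. The path edges are immediate from the definition of a facet chain, which requires $F_j$ to be dependent on $F_{j-1}$. For the converse I would invoke the local picture of dependency from Figure~\ref{fig:d-3dim}: two facets can be dependent only when their interior edges $e(F_i),e(F_j)$ occupy the corner configuration of a single unit cell of $\gamlam$. Along the zigzag of edges $e(F_1),\ldots,e(F_k)$ making up $C$, only consecutive edges meet in such a cell, so no chord with $|i-j|\ge 2$ can occur. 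The identical argument applied to $C'$ shows $G_{C'}$ is the path $F_1' - \cdots - F_k'$.

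With both dependency graphs identified as the path $P_k$, the conclusion follows formally. The restriction $\phi|_C$ is a graph isomorphism $G_C \to G_{C'}$, i.e. an isomorphism $P_k \to P_k$. When $k \ge 3$ the two endpoints are the only vertices of degree $1$, so $\phi$ must map $\{F_1,F_k\}$ to $\{F_1',F_k'\}$; tracing along the unique remaining neighbor at each step starting from the image of $F_1$ then forces either $\phi(F_i)=F_i'$ for all $i$ (if $\phi(F_1)=F_1'$) or $\phi(F_i)=F_{k+1-i}'$ for all $i$ (if $\phi(F_1)=F_k'$). The cases $k=1,2$ are handled directly, since the two listed maps exhaust the possibilities there as well.

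I expect the only genuinely nontrivial step to be the ``no chord'' half of the claim $G_C = P_k$, namely verifying that non-consecutive facets of a chain are never dependent. This is where one must combine the explicit zigzag geometry of facet chains with the local characterization of dependency in Figure~\ref{fig:d-3dim}; the subtlety flagged in Figure~\ref{fig:nondependent}, that the corner configuration can fail to produce a dependency when coordinates are already forced equal, only removes potential dependencies and hence does not threaten the argument. Everything downstream of this geometric claim is purely combinatorial.
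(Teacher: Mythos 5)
Your proposal is correct and takes essentially the same route as the paper: the paper also fixes the image of $F_1$ (using that $F_1,F_k$ are the only facets dependent on exactly one other facet) and propagates along the dependency relation, which is exactly your path-isomorphism argument. Your explicit verification that non-consecutive facets of a chain are never dependent is left implicit in the paper, so your write-up is if anything slightly more complete.
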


\begin{proof}
If $\phi(F_1) = F_1'$, the chain of dependencies of the $F_i$ will determine the image of each $F_i$. More specifically,  $\phi(F_2)$ must be dependent on $\phi(F_1)=F_1'$, so we must have $\phi(F_2) = F_2'$, and so forth. 
Else if $\phi(F_1) \neq F_1'$, we must have $\phi(F_1) = F_k'$ since $F_1$ and $F_k$ are the only facets dependent on exactly one other facet.
Again the chain of dependencies implies that $\phi(F_2)$ must be dependent on $\phi(F_1)=F_k'$, so we must have $\phi(F_2) = F_{k-1}'$, and so forth.
\end{proof}

Under the assumptions of Lemma~\ref{lem:chain-flip}, if $\phi(F_i) = F_{k+1-i}'$ for all $1 \le i \le k$, then we say $C$ is mapped to the \emph{flip} of $C'$. 
In particular if $\phi(C) = C$, then $\phi$ either flips or does not flip $C$. We call this the \emph{orientation} of chain $C$ under $\phi$.

\begin{lem}
Suppose $C$ and $C'$ are adjacent facet chains with distinct facets $F_1, F_2, F_3, F_4 \in C$ and $J_1, J_2, J_3, J_4 \in C'$ such that $F_1\cap F_2=J_1\cap J_2$ and $F_3\cap F_4=J_3\cap J_4$. Equivalently, $C$ and $C'$ are adjacent at $\ge 2$ points. Given an automorphism $\phi \in \Aut(\GT_\lambda)$ such that $\phi(C)=C$ and $\phi(C')=C'$, $C$ is flipped under $\phi$ iff $C'$ is flipped under $\phi$.
\label{lem:two-points-intersection-same-orientation}
\end{lem}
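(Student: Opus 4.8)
The plan is to test the automorphism $\phi$ against the two shared faces $P_1:=F_1\cap F_2=J_1\cap J_2$ and $P_2:=F_3\cap F_4=J_3\cap J_4$, reading off one constraint from the $C$-side and an incompatible one from the $C'$-side whenever the two orientations disagree. The only property of $\phi$ I need beyond Lemma~\ref{lem:chain-flip} is that an automorphism of the face lattice preserves meets, so $\phi(F\cap G)=\phi(F)\cap\phi(G)$ for any faces $F,G$; in particular $\phi$ sends the intersection of two facets to the intersection of their images. By the geometry of facet chains (Figure~\ref{fig:d-3dim}), each adjacency point is a \emph{dependency corner} of each chain: $F_1,F_2$ are consecutive in $C$ and $J_1,J_2$ are consecutive in $C'$, and similarly for $F_3,F_4,J_3,J_4$.

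First I would record what a flip does to the corners of a chain. Writing $C'=(J_1',\dots,J_{k'}')$, its dependency corners are the distinct faces $c_p:=J_p'\cap J_{p+1}'$ for $1\le p\le k'-1$. By Lemma~\ref{lem:chain-flip}, if $\phi$ fixes $C'$ then either $\phi(J_p')=J_p'$ for all $p$, so $\phi$ fixes every $c_p$; or $\phi(J_p')=J_{k'+1-p}'$ for all $p$, in which case $\phi(c_p)=J_{k'+1-p}'\cap J_{k'-p}'=c_{k'-p}$. Thus flipping $C'$ acts on $(c_1,\dots,c_{k'-1})$ as the order-reversing involution $p\mapsto k'-p$, which fixes \emph{at most one} corner. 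The identical statement holds for $C$.

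With this in hand I would finish by contradiction, ruling out the two mixed cases. Suppose $C$ is not flipped but $C'$ is. Since $C$ is not flipped, $\phi(F_i)=F_i$ for each $F_i\in C$, so $\phi(P_1)=\phi(F_1)\cap\phi(F_2)=F_1\cap F_2=P_1$ and likewise $\phi(P_2)=P_2$. But $P_1$ and $P_2$ are two \emph{distinct} corners of $C'$ (distinct because $C$ and $C'$ are adjacent at $\ge 2$ points), so a flip of $C'$ would have to fix both, contradicting the previous paragraph. The case ``$C$ flipped, $C'$ not flipped'' is handled symmetrically by exchanging the roles of $C$ and $C'$. Hence $C$ is flipped iff $C'$ is, as claimed. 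The step I expect to require the most care is justifying that the corners $c_p$ are pairwise distinct faces---so that $p\mapsto k'-p$ really has at most one fixed point---which is exactly where the zig-zag structure of a facet chain from Section~\ref{subsec:facets} enters; granting that, the argument reduces to the short meet-preservation computation above.
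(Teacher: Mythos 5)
Your proposal is correct and follows essentially the same route as the paper: both arguments rest on $\phi$ preserving intersections of facets, on the distinctness of the dependency corners of a chain, and on the observation that a flip fixes at most one corner (the paper phrases this as ``flipping a chain can preserve at most one adjacent pair of facets''), then derive a contradiction from the two fixed adjacency points. Your reformulation of the flip as the order-reversing involution $p\mapsto k'-p$ on corners is a slightly cleaner packaging of the same idea.
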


\begin{proof}
Suppose $\phi$ does not flip $C_i$, so $\phi(F_1) = F_1$ and $\phi(F_2) = F_2$. Since $F_1 \bigcap F_2 = J_1 \bigcap J_2$, and the intersection of any pair of facets in a chain is unique, we must have $\phi(J_1), \phi(J_2) \in \{J_1, J_2\}$. Suppose for contradiction that $\phi(J_1) = J_2$ implying $C_j$ is flipped. Note flipping a chain can preserve at most one adjacent pair of facets in that chain, so we must have $\phi(\{J_3,J_4\}) \neq \{J_3, J_4\}$. Again the intersection of two facets in a chain is unique, implying $\phi(J_3) \bigcap \phi(J_4) \neq J_3 \bigcap J_4$, but $\phi(J_3) \bigcap \phi(J_4) = \phi(F_3) \bigcap \phi(F_4) = F_3 \bigcap F_4 = J_3 \bigcap J_4$, a contradiction. The reverse direction is similar.
\end{proof}

\begin{lem}
If an automorphism $\phi$ fixes every node of $\cal{G}_\lam$, then all nodes corresponding to chains of length $>2$ have the same orientation under $\phi$.
\label{lem:adj-chains-same-orientation}
\end{lem}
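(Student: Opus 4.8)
The plan is to combine the tree structure of $\cal{G}_\lam$ with Lemma~\ref{lem:two-points-intersection-same-orientation}, which already guarantees that two $\phi$-fixed chains adjacent at $\ge 2$ points share the same orientation. Since the length-$2$ chains are exactly the leaves of $\cal{G}_\lam$, the chains of length $>2$ are precisely the internal (non-leaf) nodes. My first observation is that the internal nodes of a tree induce a connected subtree: the unique path in $\cal{G}_\lam$ between any two internal nodes cannot pass through a leaf, since a leaf has degree $1$ and can only occur as an endpoint of a path. Hence it suffices to show that any two internal nodes joined by an \emph{edge} of $\cal{G}_\lam$ have the same orientation, and then propagate this equality along paths in the connected subtree of internal nodes by induction on path length.

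To handle a single such edge, I would isolate the key claim: if $C$ and $C'$ are adjacent chains both of length $>2$, then $C$ and $C'$ are adjacent at $\ge 2$ points. Granting this, Lemma~\ref{lem:two-points-intersection-same-orientation} applies directly, since $\phi$ fixes every node and in particular $\phi(C)=C$ and $\phi(C')=C'$; it then yields that $C$ is flipped under $\phi$ if and only if $C'$ is, i.e.\ they have the same orientation. Together with the connectivity reduction above, this completes the argument, handling the degenerate cases (zero or one chain of length $>2$) vacuously.

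The main obstacle is establishing the claim about $\ge 2$ adjacency points, which is where the concrete geometry of $\gamlam$ enters. I would argue it by recalling that each facet chain is a maximal zig-zag of interior edges, i.e.\ a staircase, and that two adjacent chains sit with one directly to the North-East of the other, as in Figure~\ref{fig:facet-chains}. Each adjacency point corresponds to a matching pair of corners: a place where a turn $e(F_i), e(F_{i+1})$ of one staircase meets a turn $e(J_a), e(J_{a+1})$ of the other around a common unit cell, forcing $F_i \cap F_{i+1} = J_a \cap J_{a+1}$. A staircase that is a chain of length $l$ has exactly $l-1$ such corners, and when two chains are adjacent in $\gamlam$ the shorter staircase lies entirely along the longer one, so each of the shorter one's corners is matched; this produces at least $\min(l,l')-1 \ge 2$ adjacency points whenever both lengths are $\ge 3$. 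The delicate part, which I expect to be the crux, is verifying that adjacency in $\gamlam$ genuinely aligns the two staircases so that all corners of the shorter chain match, ruling out a partial overlap that could leave only a single matching corner. I anticipate this following from the nested staircase structure of the interior edges within each rectangular block of $\gamlam$, and carrying out that verification carefully is the heart of the proof.
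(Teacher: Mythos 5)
Your reduction is sound as far as it goes: the non-leaf nodes of the tree $\cal{G}_\lam$ do induce a connected subtree, so it suffices to compare orientations across each edge of $\cal{G}_\lam$ joining two chains of length $>2$, and Lemma~\ref{lem:two-points-intersection-same-orientation} handles any such pair that is adjacent at $\ge 2$ points. The gap is that your key claim --- that two adjacent chains both of length $>2$ are always adjacent at $\ge 2$ points --- is false. The number of adjacency points between two adjacent chains is not $\min(l,l')-1$; it is governed by the partition of the interior points of $\gamlam$ into maximal diagonal runs, and this partition can contain singletons lying between two chains of length $>2$. This happens exactly when $a_1=2$ or $a_m=2$ (see Figure~\ref{fig:interior-points2}): for example with $a_1=2$ there is a family of length-$3$ chains $D_1,\ldots,D_{a_2-2}$ in which consecutive chains are adjacent at exactly \emph{one} point, so the two staircases share only a single matching corner and Lemma~\ref{lem:two-points-intersection-same-orientation} simply does not apply to them. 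The "delicate part" you flagged --- ruling out partial overlap with only one matching corner --- cannot in fact be ruled out.

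The paper closes this case by a different mechanism rather than by counting adjacency points: assuming WLOG $a_1=2$, the chain $D_1$ is adjacent to the length-$2$ leaf chain near $t_1$, whose facets are pinned down, and this forces the intersection of the two rightmost facets of $D_1$, hence forces $D_1$ not to be flipped; the shared single adjacency point between $D_k$ and $D_{k+1}$ then propagates "not flipped" along the whole family and into any longer chain adjacent to $D_{a_2-2}$. So in the single-point case one does not merely conclude that the two orientations agree --- one shows both chains are actually unflipped. To repair your argument you would need to add this case analysis (or some substitute for it); as written, the proof does not cover partitions with $a_1=2$ or $a_m=2$.
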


\begin{proof}
By Lemma~\ref{lem:two-points-intersection-same-orientation}, it only remains to prove that if two adjacent chains of length $> 2$ are adjacent at one point, then they have the same orientation under $\phi$. Notice that here when we talk about a point at which two adjacent chains are adjacent, we actually mean a $d-3$ dimensional face as the intersection. See Figure~\ref{fig:pointd-3} for the natural correspondence. 

\begin{figure}[h!]
\includegraphics[scale=0.12]{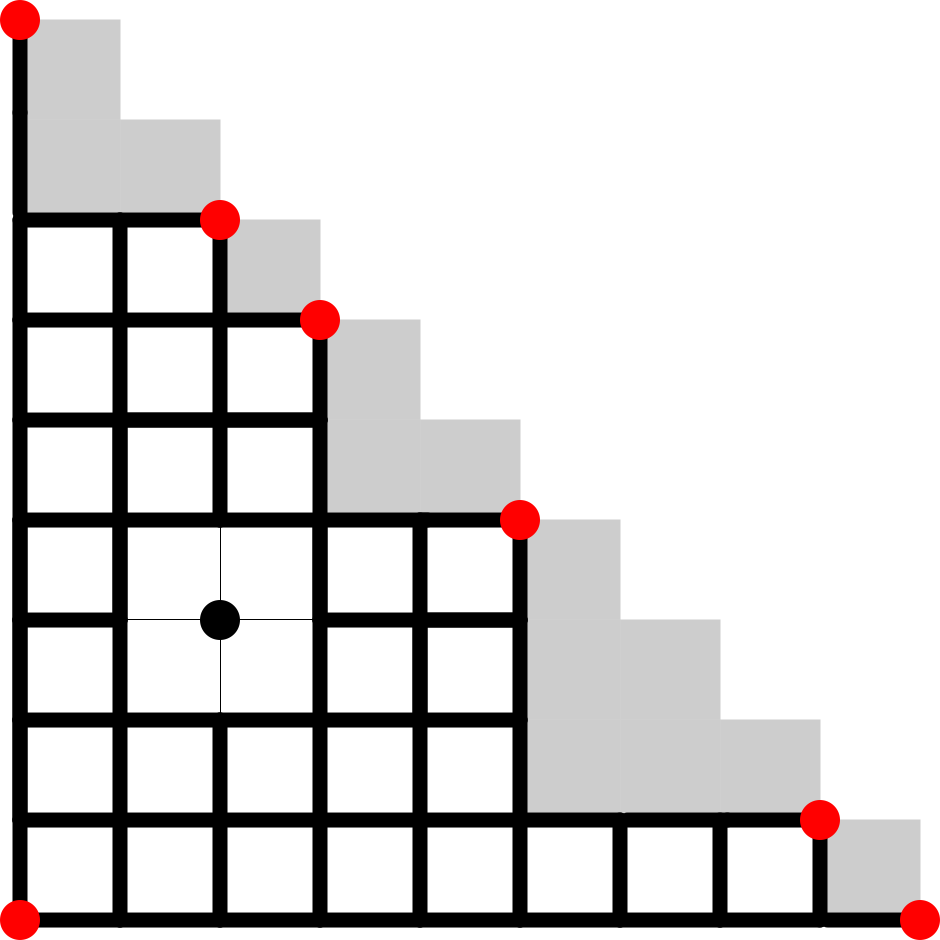}
\caption{The correspondence between an interior point of $\gamlam$ and the ladder diagram for a $d-3$ dimensional face}
\label{fig:pointd-3}
\end{figure}

Consider the set of points at which two chains with length greater than 2 can be adjacent.  These are the points inside $\gamlam$, excluding the corner points near terminal vertices, whose nearby parts $a_i,a_{i+1}$ have size at least 2. We partition these points into sets as follows: two points $p_1, p_2$ are in the same set if they lie along the same diagonal and if all of the points lying on this diagonal and lying between $p_1,p_2$ are inside $\gamlam$. See Figure~\ref{fig:interior-point} for an example.

\begin{figure}[h!]
\includegraphics[scale=0.12]{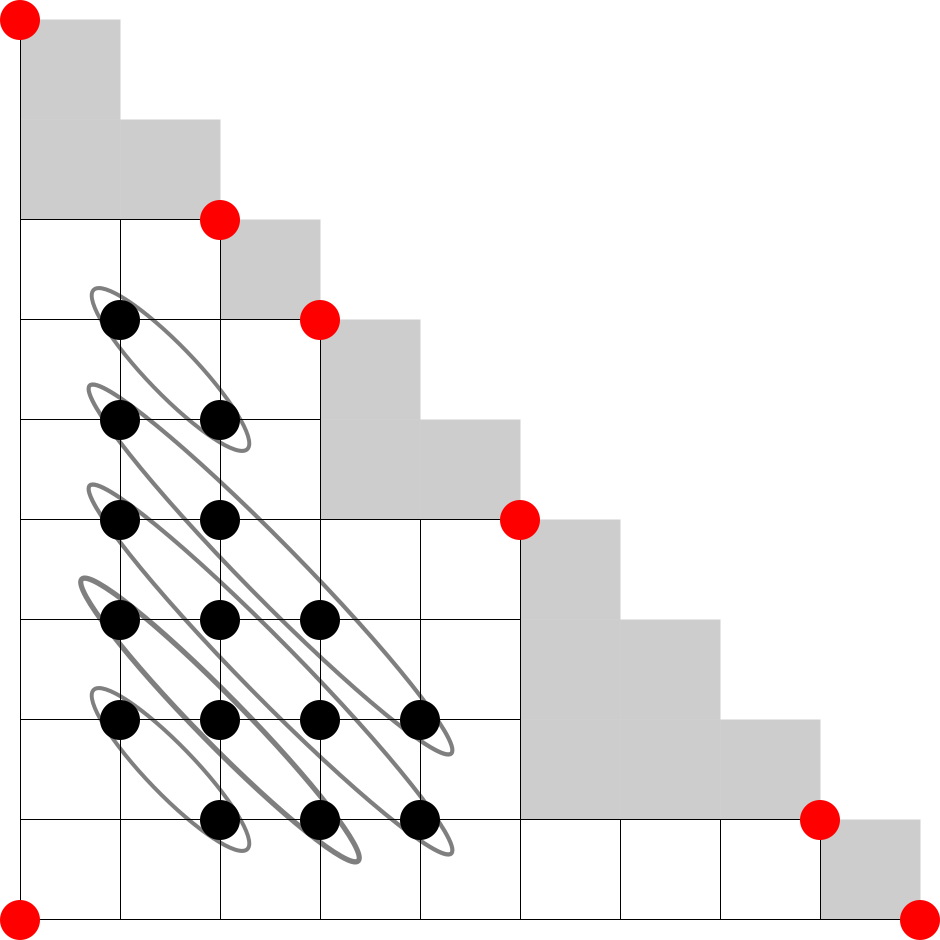}
\caption{Partition of interior points}
\label{fig:interior-point}

\end{figure}

If two chains are adjacent, then the points at which they are adjacent correspond to one of these sets.
So we are only concerned with adjacencies corresponding to sets of size $1$. There only exist sets of size $1$ corresponding to adjacencies between chains of length $> 2$ if either $a_1 = 2$ or $a_m = 2$. See Figure~\ref{fig:interior-points2} for an example.

\begin{figure}[h!]
\includegraphics[scale=0.12]{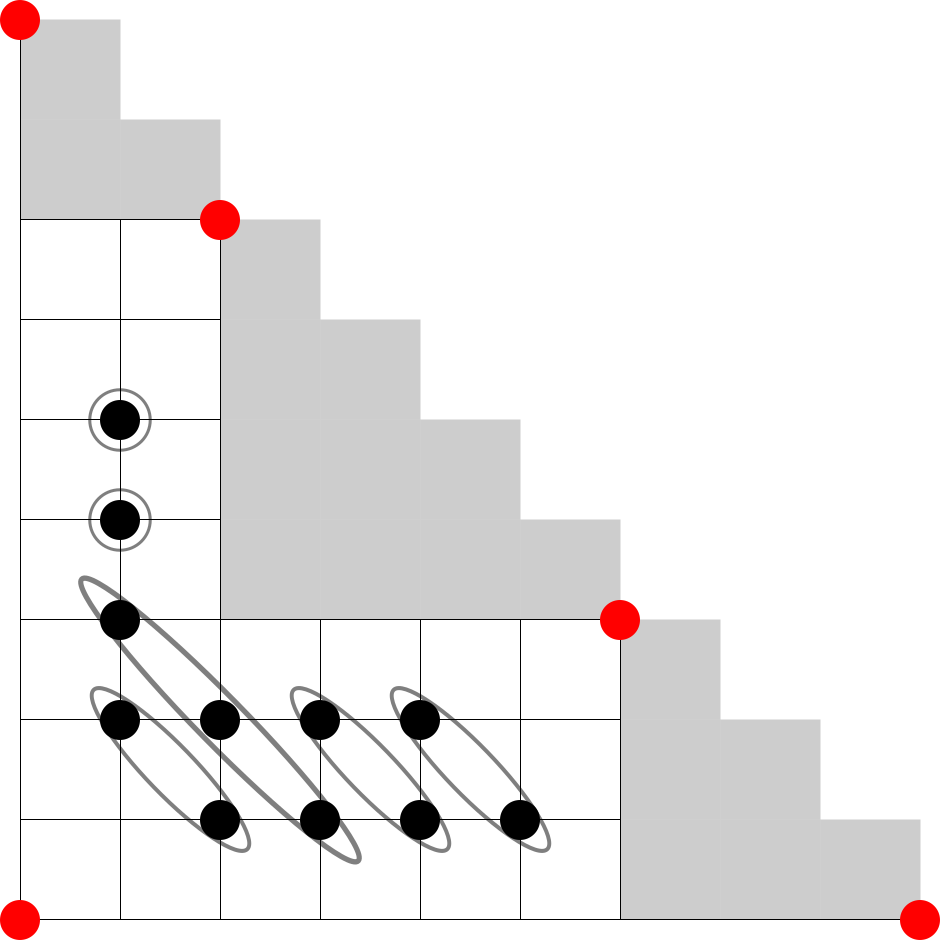}
\qquad
\includegraphics[scale=0.12]{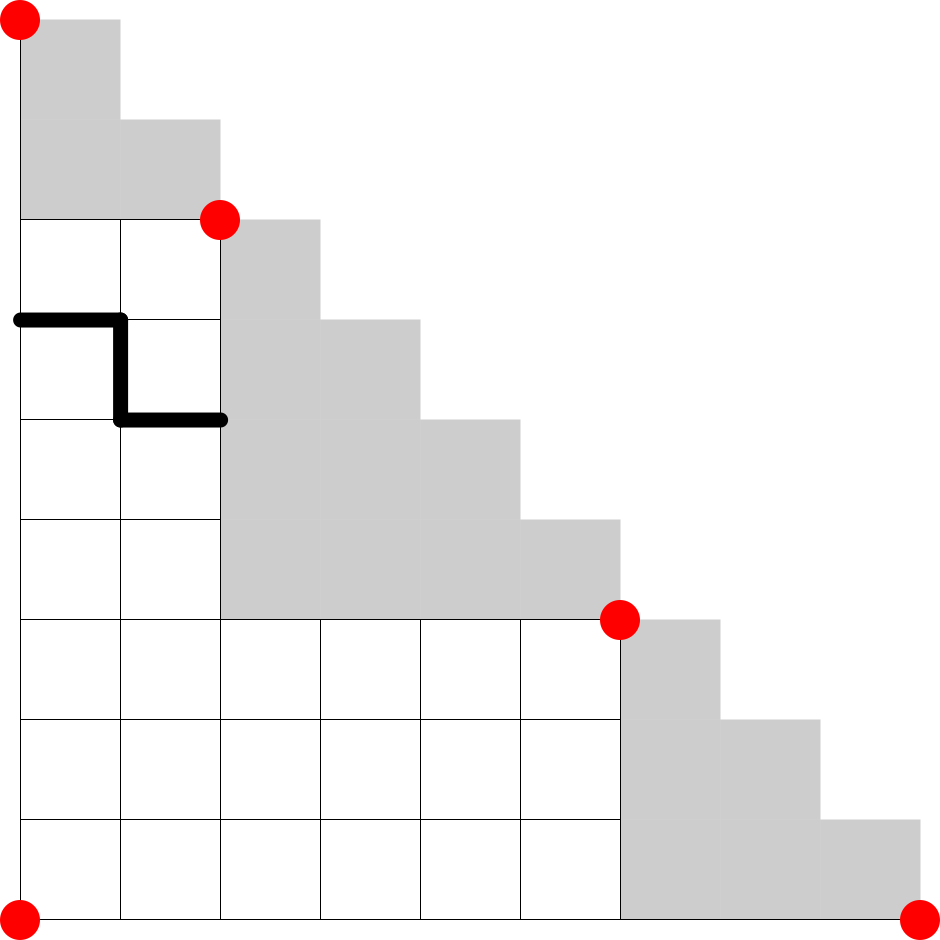}
\caption{Left: partition of interior points when $a_1=2$. Right: example of length 3 chain $D_1$.}
\label{fig:interior-points2}
\end{figure}

WLOG assume $a_1 = 2$. Let $D_1, \ldots, D_{a_2-2}$ denote the length $3$ chains such that $D_1$ is adjacent to the length 2 chain near $t_1$ and $D_k$ is adjacent to $D_{k+1}$ for $1 \le k \le a_2-3$. Once the facets in $D_1$ are fixed, the intersection of the two leftmost facets in $D_2$ is fixed since this equals the intersection of the two rightmost facets in $D_1$. So $D_1$ could not have been flipped, and $D_2$ is also not flipped. Similarly, we can argue that $D_3, \ldots, D_{a_2-2}$ are not flipped. If $D_{a_2-2}$ is adjacent to a chain of length $>2$, repeating this argument shows that this chain cannot be flipped. In particular, all of these chains have the same orientation.
\end{proof}

\subsection{The Automorphism Group}\label{subsec:proving-autom-gp}
In this section, we finish the proofs of Theorems~\ref{thm:autom-m=2} and \ref{thm:autom-m>=3}. Recall that in Section~\ref{subsec:sym}, we showed that the desired groups of automorphisms are contained in $\Aut(\GT_\lam)$. We will now show equality by bounding the size of $\Aut(\GT_\lam)$. We will begin with a few preliminaries.

For group $G$ acting on the set of facets of $\GT_\lam$, we adopt the following conventions:
\begin{itemize}
\item For any set of facets $X$, we denote the stabilizer of $X$ by $G(X) := \{g \in G : g \cdot x = x \ \forall x \in X \}$. We denote the orbit of $X$ with respect to a subgroup $H \subset G$ by $O_H(X)$. 
\item Let $|G|$ denote the order of a group $G$.
\item For any set of facets $X$, let $X^c$ denote the set of facets not in $X$.
\end{itemize}

We will analyze the action of an automorphism on the facets of $\GT_\lam$ and apply the Orbit-Stabilizer theorem, which states that $|G| = |O_G(X)||G(X)|$. Below we finish the proof of Theorem~\ref{thm:autom-m=2}.
\begin{proof}[Proof of Thm. \ref{thm:autom-m=2} cont.]
In Section~\ref{subsec:sym}, we showed that $D_4 \times \Z_2 \times \Z_2^{\delta_{a_1, a_2}} \subseteq \Aut(\GT_\lambda)$. Now we show that the order of $\Aut(\GT_\lambda)$ is at most the order of this group. 

Let $\phi \in \Aut(\GT_\lambda)$. By Lemma~\ref{lem:fcmapstofc}, $\phi$ preserves lengths of chains, so we may consider the action of $\phi$ on chains of the same length. There are two facets in length $1$ chains. Let $X_{1}$ denote the set of these two facets. Let $C_0 = (F_{0,0}, F_{0,1})$ denote the length $2$ chain near the origin and let $C_2 = (F_{2,0}, F_{2,1})$ denote the length $2$ chain near $t_1$. Let $X_{2,0} := \{F_{0,0},F_{0,1}\}$ and $X_{2,2} := X_{2,0} \cup \{F_{2,0},F_{2,1}\}$.

To preserve lengths of chains, $\phi$ must send $F_{0,0}$ to a facet in a length $2$ chain so $|O_G(F_{0,0})| \le 4$. If $\phi$ fixes $F_{0,0}$, then $\phi$ fixes $F_{0,1}$ and we have $|O_{G(X_{2,0})}(F_{2,0})| \le 2$ and $|O_{G(X_{2,0} \cup \{F_{2,0}\})}(F_{2,1})| = 1$. Any $\phi \in G(X_{2,2})$ can either fix or exchange the facets in $X_0$ so $|O_{G(X_{2,2})}(X_0)| \le 2$.

Recalling Definition~\ref{def:adj-graph}, if $\phi$ fixes all chains of length $\le 2$, then $\phi$ fixes every node in $\cal{G}_\lam$. By Lemma~\ref{lem:chain-flip}, the orientation of all non-leaves must be the same. Now, we must show that $\phi \in G(X_0 \cup X_{2,2})$ can only flip all non-leaves if $\lam = \lam'$. Note that the corresponding tree $\mathcal{G}_\lambda$ is a path. If we have such a $\phi$ flipping each chain, we can show that every chain is symmetric about the line $y = x$ by inducting on the length from $C_0$. Every chain being symmetric about $y=x$ implies the entire diagram is symmetric about $y=x$, implying $\lambda = \lambda'$. 

Therefore, $|O_{G(X_0 \cup X_{2,2})}(F')| \le 2^{\delta_{a_1,a_2}}$ and $|G(X_0 \cup X_{2,2} \cup F')| = 1$.

So we have 
\begin{align*}
|G| &= |O_G(X_{2,0})||O_{G({X_{2,0}})}(X_{2,2} \backslash X_{2,0})||O_{G(X_{2,2})}(X_0)||O_{G(X_0 \cup X_{2,2})}(F')||G(X_0 \cup X_{2,2} \cup \{F'\})| \\
&\le 4 \cdot 2 \cdot 2 \cdot 2^{\delta_{a_1,a_2}} \cdot 1.
\end{align*}
We conclude $\Aut(\GT_\lambda) \cong D_4 \times \Z_2 \times \Z_2^{\delta_{a_1, a_2}}$.
\end{proof}

\begin{remark*}
It is natural to ask how the automorphism group of $\GT_\lam$ is related to the automorphism group of the 1-skeleton of $\GT_\lam$. Clearly $\Aut(\GT_\lam)$ is contained within the automorphism group of the 1-skeleton, but we do not always have equality. If $\lam = (3,3)$, then $\Aut(\GT_\lam)$ has size $16$ while the automorphim group of the 1-skeleton has size $32$.  Numerical computations suggest that we may have equality in all other cases when $m=2$.
\end{remark*}

The proof of Theorem~\ref{thm:autom-m>=3} uses similar arguments. We begin by establishing some useful lemmas about the image of chains of length $1$ or $2$ under any automorphism.

We label the length $1$ chains in $\gamlam$ as follows: let $\cal{C}_1$ be the set of length $1$ chains occurring to the left of $t_1$ and let $\cal{C}_2$ be the set of length $1$ chains occurring to the right of $t_{m-1}$. Let $X_1$ denote the set of facets in chains $\cal{C}_1 \cup \cal{C}_2$. Figure~\ref{fig:length1} shows all the length 1 chains visually. Let $D_1 \in \cal{C}_1$ and $D_2 \in \cal{C}_2$ be the length $1$ chains with smallest y-coordinate and x-coordinate respectively. 

\begin{figure}[h!]
\includegraphics[scale=0.12]{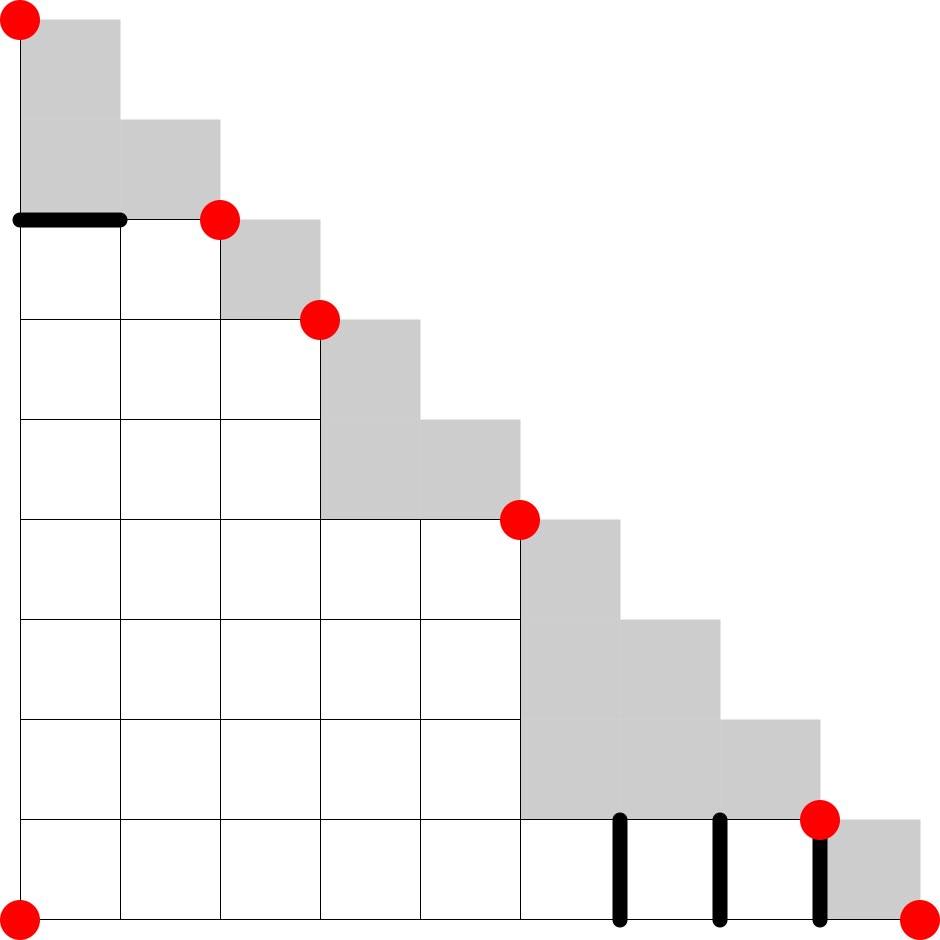}
\caption{All length 1 chains.}
\label{fig:length1}
\end{figure}

We label the length $2$ chains in $\gamlam$ as follows: let $C_0$ denote the length $2$ chain near the origin. Let $C_2, C_3, \ldots, C_{2m-2}$ denote the length $2$ chains that occur along the border of $\gamlam$ where each even index chain $C_{2k}$ (which may not exist) occurs near terminal vertex $t_k$ and each odd index chain $C_{2k-1}$ occurs at the corner of the $k$th shaded triangular subgrid corresponding to the coordinates that are fixed by $k^{a_k}$ in $\lam$. We call all odd index chains $C_{2k-1}$, $k=2,\ldots,m-1$, the \emph{type A} chains and all even index chains $C_{2k}$, $k=1,\ldots,m-1$ (excluding $C_0$) the \emph{type B} chains. As shown in Figure~\ref{fig:chain-types}, the type B chain $C_{2k}$ will not exist if $a_k = 1$ or $a_{k+1}=1$ while all type A chains will exist. Our notation implicitly assumes that all of $C_2, \ldots, C_{2m-2}$ exist but all of our arguments hold with reference to the chains that actually exist for any given $\lam$. For $0 \le k \le 2m-4$, let $X_{2,k}$ denote the set of facets in chains $C_0, C_2, C_3, \ldots, C_k$ and let $X_2 := X_{2,2m-2}$ denote the set of facets in chains $C_0, C_2, C_3, \ldots, C_{2m-2}$.

\begin{figure}[h!]
\includegraphics[scale=0.12]{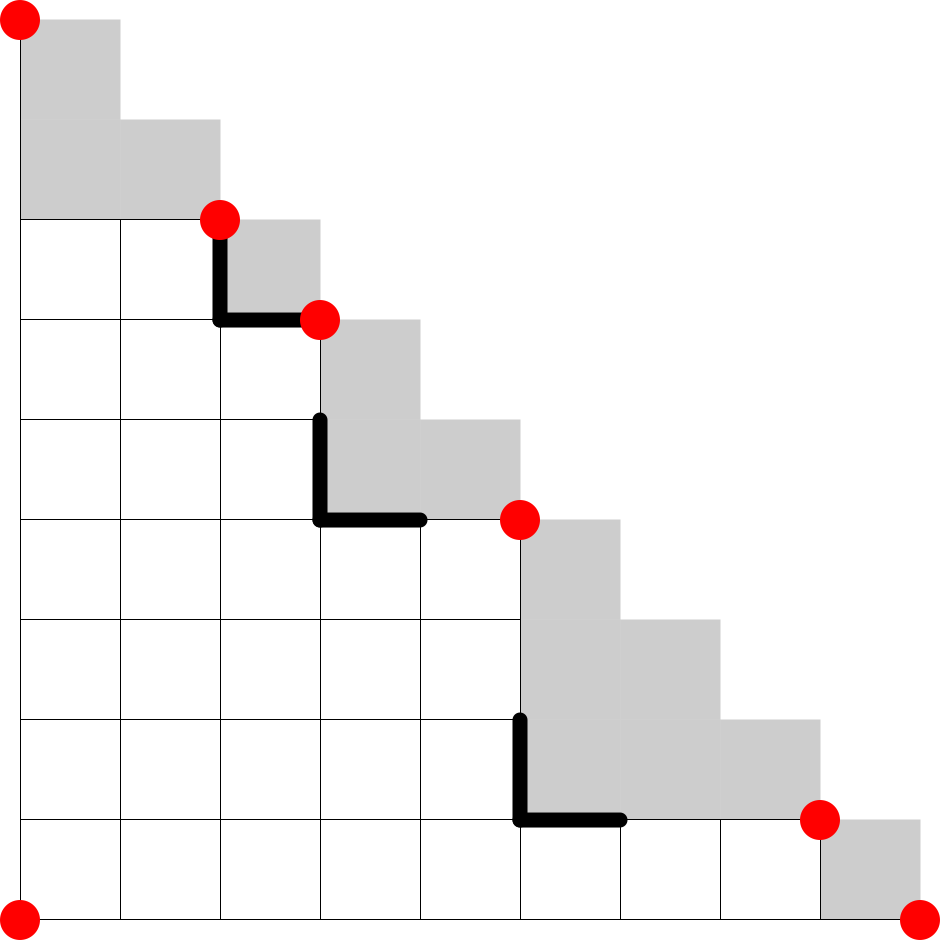}
\qquad
\includegraphics[scale=0.12]{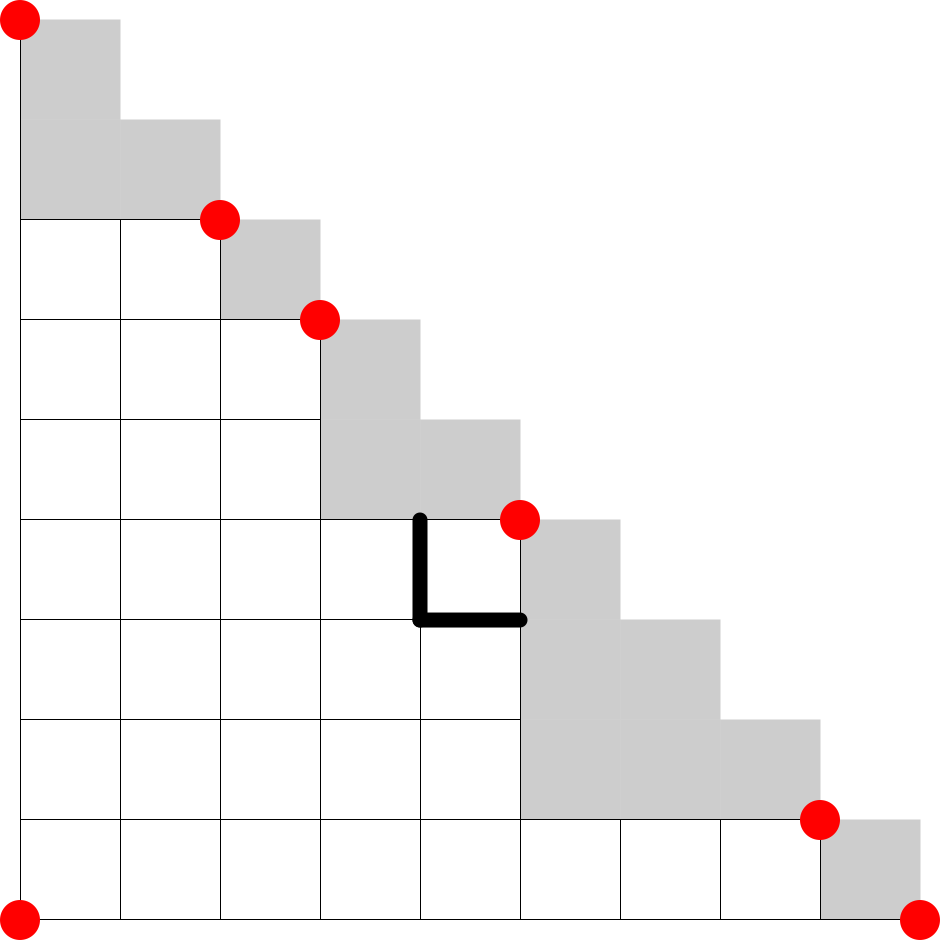}
\caption{Left: all type A chains. Right: all type B chains.}
\label{fig:chain-types}
\end{figure}

\begin{lem} 
The chains $D_1$, $D_2$, $C_2$, $C_3$, $\ldots$ , $C_{2m-2}$ can be ordered into a sequence $D_1$, $C_2$, $C_3$, $\ldots$ , $C_{2m-2}$, $D_2$ such that any $\phi \in \Aut(\GT_\lam)$ preserves the ordering within this sequence. In particular, any $\phi$ sends each chain in this sequence to itself or $\phi$ flips the order of the sequence.
\label{lem:seq-of-chains}
\end{lem}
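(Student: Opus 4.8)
The plan is to exploit the two structural constraints that an automorphism must respect: it preserves chain length (Lemma~\ref{lem:fcmapstofc}) and chain adjacency (Lemma~\ref{lem:admapstoad}). First I would record that the length-$2$ chains of $\gamlam$ are exactly $C_0, C_2, C_3, \ldots, C_{2m-2}$ — the leaves of the tree $\cal{G}_\lam$ from Definition~\ref{def:adj-graph} — while the length-$1$ chains include $D_1, D_2$ among possibly others. Since $\phi$ permutes length-$1$ chains among themselves and length-$2$ chains among themselves, the first task is to separate $C_0$ from the boundary chains $C_2,\ldots,C_{2m-2}$ and to isolate $\{D_1,D_2\}$ from the remaining length-$1$ chains. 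The chain $C_0$ sits near the origin, on the opposite side of $\gamlam$ from the terminal vertices, whereas every $C_i$ with $i \ge 2$ hugs the main diagonal; I would show this distinction is visible combinatorially (for instance, $C_0$ is the unique length-$2$ chain whose neighbor in $\cal{G}_\lam$ separates it from all the others, i.e.\ $C_0$ is the root), so that $\phi(C_0)=C_0$ and $\phi$ restricts to a permutation of $\{D_1, C_2,\ldots, C_{2m-2}, D_2\}$.

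Next I would impose the linear structure. Reading the leaves of the rooted planar tree $\cal{G}_\lam$ in their boundary order gives exactly $C_2, C_3, \ldots, C_{2m-2}$ (with $C_0$ as the root), and consecutive boundary chains $C_k, C_{k+1}$ are tied together by the chains of length $\ge 3$ lying just inside the boundary between them: these longer chains form the spine of $\cal{G}_\lam$ and attach the $C_i$ in order. Because $\phi$ preserves adjacency and fixes the root, it must act on this spine either as the identity or as a reversal, and correspondingly it either fixes each $C_i$ or sends $C_i \mapsto C_{2m-i}$ for every $i$ (using Lemma~\ref{lem:chain-flip} and Lemma~\ref{lem:two-points-intersection-same-orientation} to control how each individual chain is carried along and oriented). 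This is where the monotonicity of the ordering comes from: $\phi$ cannot scramble the $C_i$ because doing so would violate the adjacency pattern of the spine.

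Finally I would attach the endpoints and conclude. The chains $D_1$ and $D_2$ are the extreme length-$1$ chains sitting just past $C_2$ and $C_{2m-2}$ along the boundary (smallest $y$- and $x$-coordinate respectively), so an automorphism that fixes the boundary order must fix each of $D_1, D_2$, while one that reverses the order must swap them; this matches the two possibilities for the $C_i$ and yields the claimed dichotomy. The main obstacle is the rigidity step in the second paragraph: proving that the length-$\ge 3$ chains genuinely form a single path, so that $\cal{G}_\lam$ is a caterpillar whose only order-automorphisms are the identity and the reversal, and that the root $C_0$ is forced to be fixed. Establishing this requires a careful local analysis of $\gamlam$ at each terminal vertex $t_k$ and each shaded triangular subgrid, checking that the chains of length $\ge 3$ are nested along the diagonal and that no extra symmetry of the adjacency graph can permute them non-monotonically.
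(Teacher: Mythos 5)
Your plan diverges from the paper's at the crucial step, and the step you yourself flag as ``the main obstacle'' is a genuine gap rather than a technicality. The paper does not order the chains $D_1, C_2, \ldots, C_{2m-2}, D_2$ via the adjacency graph $\cal{G}_\lam$ at all. Instead it introduces an \emph{incompatibility} relation read off from the face lattice: two chains are incompatible when no vertex of $\GT_\lam$ contains the relevant facets of both. One then checks directly that, among the chains in question, each chain is incompatible precisely with its neighbors in the claimed sequence (e.g.\ $D_i$ is incompatible only with the nearest type A chain, two type A chains are incompatible iff their shaded triangular subgrids are adjacent, and a type B chain is incompatible only with the chains on either side of it). Since ``there exists a vertex containing these facets'' is visibly a face-lattice property, any automorphism preserves the incompatibility graph, which on this set of chains is a path; hence the order is preserved or reversed. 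The same relation is what forces $\phi(\cal{C}_1)$ to be all of $\cal{C}_1$ or all of $\cal{C}_2$, what distinguishes $C_0$, and what pins down the two facets inside each type A chain --- none of which your proposal supplies.

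Your route through $\cal{G}_\lam$ has two defects that cannot be repaired without importing something equivalent to the above. First, the structural claim that the chains of length $\ge 3$ ``form the spine'' of a caterpillar is false in general: chains run along antidiagonals of $\gamlam$, an antidiagonal can be cut into several chains by the fixed triangular regions, and $\cal{G}_\lam$ genuinely branches (the paper's later argument about common ancestors of two leaves presupposes exactly this). Second, and more fundamentally, an automorphism of $\GT_\lam$ is only guaranteed to induce an automorphism of the \emph{abstract} adjacency graph; the planar embedding and the left-to-right order of the leaves are extra data, and an abstract rooted-tree automorphism can permute sibling branches (note that two type A chains $C_{2k-1}$ and $C_{2j-1}$ sit at the same depth whenever $a_k=a_j$, per Lemma~\ref{lem:seq-flip-lam-lam'}). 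Ruling out such non-monotone permutations is precisely the content of the lemma, so appealing to the planar tree is circular. Finally, $D_1$ and $D_2$ are not nodes of $\cal{G}_\lam$ by Definition~\ref{def:adj-graph}, and when $a_1=1$ there are $a_2$ length-one chains in $\cal{C}_1$ permuted by the symmetric group symmetry; your endpoint step does not explain how $D_1$ interacts with these, whereas the paper handles it by showing all chains of $\cal{C}_1$ are pairwise incompatible and must map as a block.
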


Figure~\ref{fig:sequence} shows a picture of the sequence of chains mentioned in Lemma~\ref{lem:seq-of-chains}.

\begin{figure}[h!]
\includegraphics[scale=0.12]{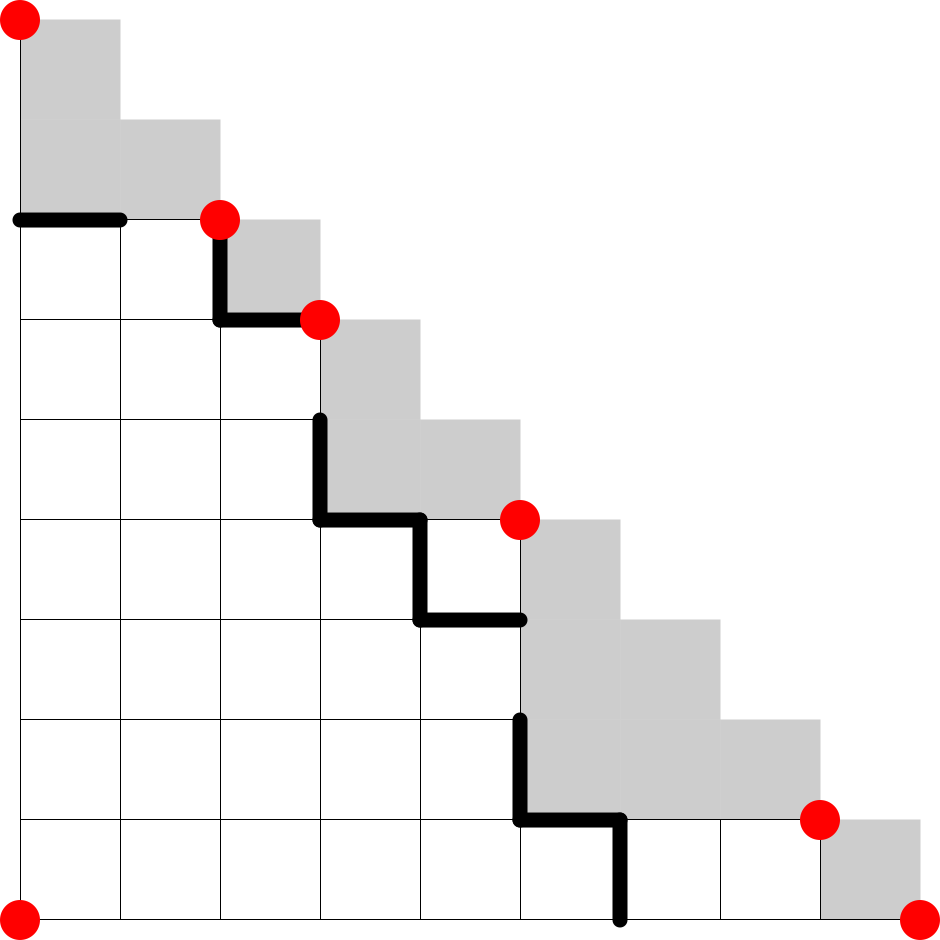}
\caption{The sequence of chains $D_1,C_2,\ldots,C_{2m-2},D_2$.}
\label{fig:sequence}
\end{figure}

\begin{proof}
By Lemma~\ref{lem:fcmapstofc}, $\phi$ preserves lengths of chains. We say two length $1$ chains are incompatible if there does not exist a vertex containing both of them and we say they are compatible otherwise. Note that all chains in $\cal{C}_1$ are incompatible with each other but compatible with any chain in $\cal{C}_2$. The set of chains that is the image of $\cal{C}_1$ under $\phi$ must all be incompatible with each other, so $\phi$ must map all chains of $\cal{C}_1$ to chains in $\cal{C}_1$ or it must all chains of $\cal{C}_1$ to chains in $\cal{C}_2$. The latter can only occur if the sizes of $\cal{C}_1$ and $\cal{C}_2$ are the same. The rest of this proof and the proof of Lemma~\ref{lem:seq-flip-lam-lam'} will show that this can only occur if $\lam = \lam'$.

Thus $|O_G(X_1)| \le (a_2)!^{\delta_{a_1,1}}\cdot(a_{m-1})!^{\delta_{a_m,1}}\cdot 2^{t}$ where $t = 1$ if $\lam = \lam'$ and $t = 0$ otherwise. Any $\phi \in G(X_1)$ must send $C_0$ to itself because it is the only length $2$ chain such that for either of its facets, there exist vertices containing this facet and any facet in a type A chain. Note that $\phi$ may send the facets of $C_0$ to each other. 

Chain $D_i$ and a type A chain are incompatible if there does not exist a vertex containing the facet in the length $1$ chain and both facets of the length $2$ chain. Visually, the type A chain closest to the length $1$ chain will be the only type A chain incompatible with it. Two type A chains are incompatible if there does not exist a vertex containing all four facets in these chains. Visually, two type A chains are incompatible iff they occur at the corners of adjacent shaded triangular subgrids (these are the subgrids corresponding to the fixed entries of $\cal{I}_n$). We can form a sequence starting with $D_1$ where two adjacent chains in this sequence are incompatible. Visually, this sequence corresponds to reading $D_1,D_2$ and the type A chains from left to right. 

Now we insert the type B chains into this sequence. A type B chain and $D_i$ are incompatible if there does not exist a vertex containing both facets in the type B chain and the facet in $D_i$. A type B chain and type A chain are incompatible if there does not exist a vertex containing all four facets in these chains. Visually, a type B chain is incompatible only with the chains adjacent to it. So we can insert the type B chains into the sequence so that the sequence corresponds to reading $D_1,D_2$ and all length $2$ chains from left to right.

Note that $\phi \in G(X_1)$ must preserve these relations of incompatibility so $\phi$ fixes all length $2$ chains. Finally, note that $\phi$ cannot map the two facets in a type A chain to each other: once $D_1$ is fixed, there does not exist a vertex containing the facet in $D_1$ and the vertical facet in the type A chain incompatible with $D_1$, but there does exist such a vertex for the horizontal facet. This fixes the image of the facets in this type A chain. Similarly, if the facets in a type A chain are fixed, then the facets in a type A chain incompatible with it will also be fixed. Of all length $2$ chains, $\phi$ can only swap the facets in type B chains and chain $C_0$ and hence $|O_{G(X_1)}(X_2)| \le 2^{r_1+1}$ where $r_1$ is the number of type B chains.
\end{proof}

\begin{lem}
If the sequence of chains $D_1,C_2,C_3,\ldots,C_{2m-2},D_2$ is flipped by some $\phi \in \Aut(\GT_\lam)$, then $\lam = \lam'$.
\label{lem:seq-flip-lam-lam'}
\end{lem}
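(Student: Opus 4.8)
The plan is to show that an order-reversing automorphism of the boundary sequence can be realized only by the reflection of $\gamlam$ across the line $y=x$, and that this reflection preserves $\gamlam$ exactly when $\lam=\lam'$. So the end goal is to read off from ``$\phi$ flips the sequence'' the relation $a_k = a_{m+1-k}$ for all $k$, which is the definition of $\lam=\lam'$.

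First I would locate each chain of the sequence on the staircase boundary of $\gamlam$: the type A chain $C_{2k-1}$ marks the convex corner of the $k$th shaded triangle (for $2\le k\le m-1$), the type B chain $C_{2k}$ marks the reflex corner at the terminal vertex $t_k$ and is present iff $a_k,a_{k+1}\ge2$ (for $1\le k\le m-1$), and $D_1,D_2$ mark the two extreme ends. Reading the sequence from left to right is thus reading these corners in boundary order, so if $\phi$ reverses the sequence, then by Lemmas~\ref{lem:fcmapstofc} and~\ref{lem:admapstoad} it carries the chain at each boundary position to the chain at the mirror-image position; in particular it matches the $k$th shaded triangle with the $(m+1-k)$th and the terminal vertex $t_k$ with $t_{m-k}$.

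Next I would recover the individual step sizes. Since $\phi$ is an automorphism of the entire face lattice, it acts on all of $\cal{G}_\lam$, not merely its boundary nodes, carrying the subtree of chains filling the $k$th shaded triangle bijectively onto the subtree filling the $(m+1-k)$th triangle while preserving chain lengths and adjacencies (Lemmas~\ref{lem:fcmapstofc}, \ref{lem:admapstoad}, \ref{lem:chain-flip}). Because the $k$th triangle has size $a_k$ and the number and lengths of the chains it contains determine $a_k$, this matching forces $a_k=a_{m+1-k}$ for $2\le k\le m-1$; equivalently, the order-reversing action on $\cal{G}_\lam$ is realized by the lattice reflection $(x,y)\mapsto(y,x)$, which must then send $\gamlam$ to itself. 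For the two outer values it remains to compare $a_1$ with $a_m$: the extreme length-$1$ chains $D_1,D_2$ exist precisely when $a_1=1$ and $a_m=1$ respectively (these are the chains permuted by the Symmetric Group Symmetry, with $|\cal{C}_1|=a_2$ and $|\cal{C}_2|=a_{m-1}$ in that case), so matching $D_1\leftrightarrow D_2$ and $\cal{C}_1\leftrightarrow\cal{C}_2$ gives $a_1=1\iff a_m=1$ consistently with the already-derived $a_2=a_{m-1}$, while if $a_1,a_m\ge2$ the first and last triangles together with the fixed near-origin chain $C_0$ are compared as above to give $a_1=a_m$. Collecting these, $a_k=a_{m+1-k}$ for every $1\le k\le m$, so the terminal vertices $t_k=(s_k,n-s_k)$ satisfy $n-s_k=s_{m-k}$ and are symmetric about $y=x$; hence $\lam=\lam'$.

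The main obstacle is the middle step: showing that reversing the boundary sequence genuinely propagates to a reflection of the whole grid, i.e. that the chain data hanging off each convex corner faithfully records the individual step size $a_k$ and not merely the coarse pattern of which $a_k\ge2$. One must extract each $a_k$ from this local structure and glue the per-triangle identifications together consistently, and then separately dispose of the degenerate endpoint cases $a_1=1$ or $a_m=1$, where the boundary sequence loses or gains its extreme length-$1$ chains. Everything else follows directly from length- and adjacency-preservation.
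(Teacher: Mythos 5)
Your high-level strategy (flipping the boundary sequence forces a mirror matching of the shaded triangles, and the combinatorial data attached to each triangle pins down $a_k$) is the same as the paper's, but the two quantitative steps that make it work are left as assertions, and you flag them yourself as ``the main obstacle'' without resolving them. For the middle indices, ``the number and lengths of the chains it contains determine $a_k$'' is exactly the claim that needs proof; the paper makes it precise by computing the depth of the type A chain $C_{2k-1}$ in the rooted tree $\cal{G}_\lam$ to be $\sum_i a_i - a_k$, so that depth-preservation under a tree automorphism immediately yields $a_k = a_{m+1-k}$ for $2 \le k \le m-1$. You would need to supply some such invariant; ``subtree filling the $k$th triangle'' is not a defined object, since the chains live in the unshaded part of $\gamlam$ and are organized by anti-diagonal bands rather than by triangle.

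The more serious gap is the endpoint comparison when $a_1, a_m \ge 2$. Your proposal to compare ``the first and last triangles together with the fixed near-origin chain $C_0$ \ldots as above'' does not go through: the method ``as above'' relies on the type A chain sitting at the convex corner of the $k$th triangle, and no such chain exists for $k=1$ or $k=m$ (those corners are the extreme terminal vertices $t_0$ and $t_m$). Moreover $C_0$ sits at the origin, diagonally opposite both end triangles and symmetric with respect to them, so fixing or flipping it carries no information distinguishing $a_1$ from $a_m$. The existence criterion for $D_1,D_2$ only gives $a_1=1 \iff a_m=1$. The paper closes this case with a genuinely different invariant: the minimal number of additional facets needed to make the intersection of $D_1$ with a facet of $C_3$ empty, which equals $a_1+a_2-2$ (and $a_{m-1}+a_m-2$ at the other end); since $\phi$ preserves this face-lattice quantity, flipping forces $a_1+a_2=a_{m-1}+a_m$, which combined with $a_2=a_{m-1}$ gives $a_1=a_m$. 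Without an argument of this kind your proof does not establish $a_1 = a_m$, and hence does not reach $\lam = \lam'$.
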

\begin{proof}
Assume this sequence of chains is flipped. Define the distance between $D_1$ and $C_3$ (which always exists) to be the cardinality of the minimal set of facets $\cal{S}$, containing no other facets from this sequence, such that the intersection of facet $D_1$, one facet from $C_3$, and the facets in $\cal{S}$ is empty. We give examples of such sets in Figures~\ref{fig:distance}. This distance is $a_1 + a_2-2$ since such a set must visually separate the coordinates that are fixed as 1 and 2 (the top-leftmost gray squares in Fig. \ref{fig:distance}). Similarly, the distance between $D_2$ and $C_{2m-3}$ is $a_{m-1} + a_m-2$. Note that $\phi$ must preserve this distance, as this property can be realized in the face lattice of the polytope. So if $\phi$ flips the sequence of chains, then $a_1 + a_2 = a_{m-1} + a_m$.

\begin{figure}[h!]
\includegraphics[scale=0.12]{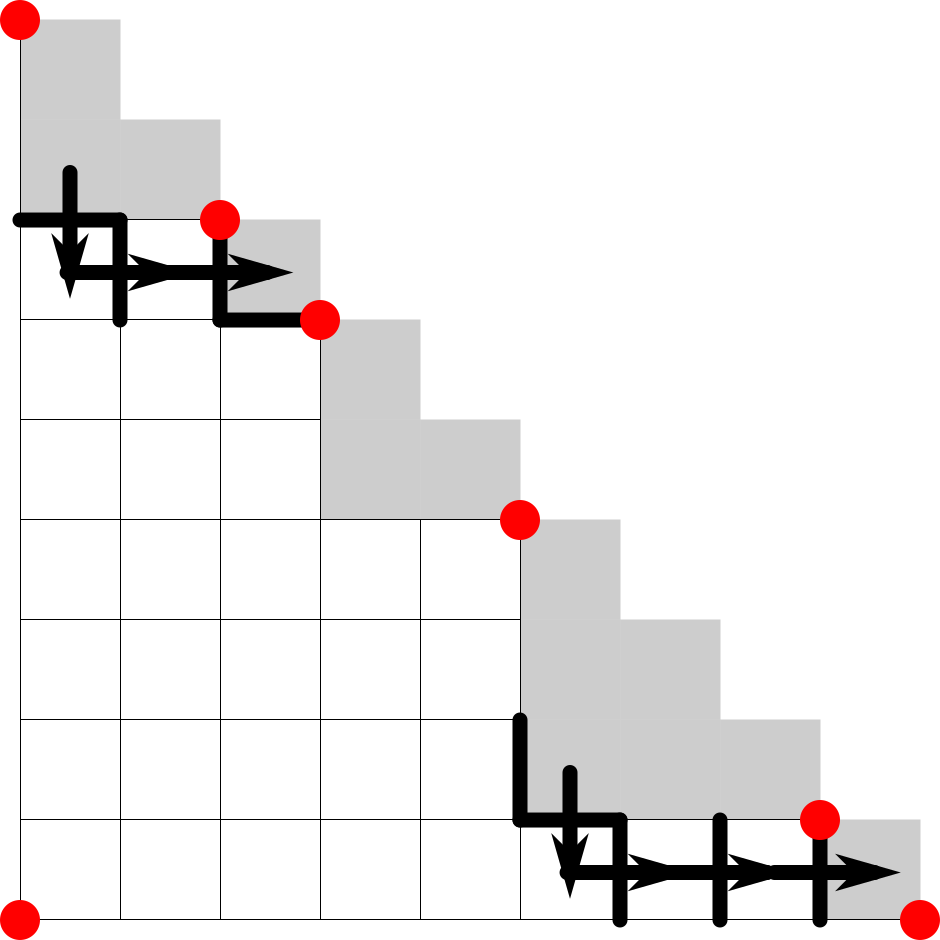}
\caption{The distance between $D_1$ and $C_3$ is 1 and the distance between $D_2$ and $C_{2m-3}$ is 2.}
\label{fig:distance}
\end{figure}

Recalling the definition of $\cal{G}_\lam$, we can define the depth of a node as one plus the number of edges between it and the root. Then the depth of $C_{2k-1}$ is 
$$a_1 + \ldots + a_{k-1} + a_{k+1} + \ldots + a_m = \sum_{i = 1}^m a_i - a_{k}.$$

Note that $\phi$ must send type A chains to type A chains. 
Since $\phi$ is an automorphism of $\cal{G}_\lam$, it must preserve depth so if $\phi$ flips the sequence of chains, then it sends $C_{2k-1}$ to chain $C_{2m-2k+1}$ so $a_{k} = a_{m+1-k}$ for $2 \le k \le m-1$. Since $a_1 + a_2 = a_{m-1} + a_m$, we have shown $a_{k} = a_{m+1-k}$ for $1 \le k \le m$ so $\lam = \lam'$.
\end{proof}

Now we are in a position to finish the proof of Theorem~\ref{thm:autom-m>=3}.

\begin{proof}[Proof of Thm. \ref{thm:autom-m>=3} cont.]
In Section~\ref{subsec:sym}, we showed that $(S_{a_2}^{\delta_{1,a_1}} \times S_{a_{m-1}}^{\delta_{1,a_{m}}} \times \Z_2^{r_1+1}) \ltimes_\varphi \Z_2^{r_2} \subseteq \Aut(\GT_\lambda)$. Now we show that the order of $\Aut(\GT_\lambda)$ is at most the order of this group. 

By Lemmas~\ref{lem:seq-of-chains} and \ref{lem:seq-flip-lam-lam'}, it suffices to bound $|G(X_1 \cup X_2)|$. Consider any automorphism $\phi \in G(X_1 \cup X_2)$. We can argue up the rooted tree $\cal{G}_\lam$ to show that all of its vertices are fixed under $\phi$. Specifically, $\phi$ must preserve adjacencies between chains so it is an isomorphism of the tree fixing the leaves of $\cal{G}_\lam$ and hence must act trivially on the nodes. Furthermore, Lemma~\ref{lem:adj-chains-same-orientation} implies that the orientation of any two non-leaves must be the same. If $\cal{G}_\lam$ has at least two leaves, then the common ancestor of these two leaves cannot be flipped by $\phi$ or else the direct children of this node cannot be fixed. Otherwise, $\cal{G}_\lam$ is a chain of nodes. If $m \ge 4$, then there are always at least two type A chains and hence at least 2 leaves. Similarly, if $m = 3$ and either $a_1,a_2 \ge 2$ or $a_2, a_3 \ge 2$, then there will be one type A chain and at least one type B chain. So $\cal{G}_\lam$ can only be a chain of nodes if $\lam = (1, 2^{a_2}, 3)$ or $\lam = (1^{a_1}, 2, 3^{a_3})$. In the first case, $\cal{G}_\lam$ contains two nodes, both of which have length 2. In the second case, WLOG assume that $a_1\geq2$. Consider the chain ending at a facet $F$ that shares a vertex of $\gamlam$ with facet in $D_1$. An example of this chain is shown in Figure~\ref{fig:a_2=1}. If this chain were flipped, then there exists a vertex containing the image of $F$ and the facet in $D_1$, but there does not exist a vertex containing $F$ and the facet in $D_1$. Hence all of the non-leaves in $\cal{G}_\lam$ are not flipped. 

\begin{figure}[h!]
\includegraphics[scale=0.12]{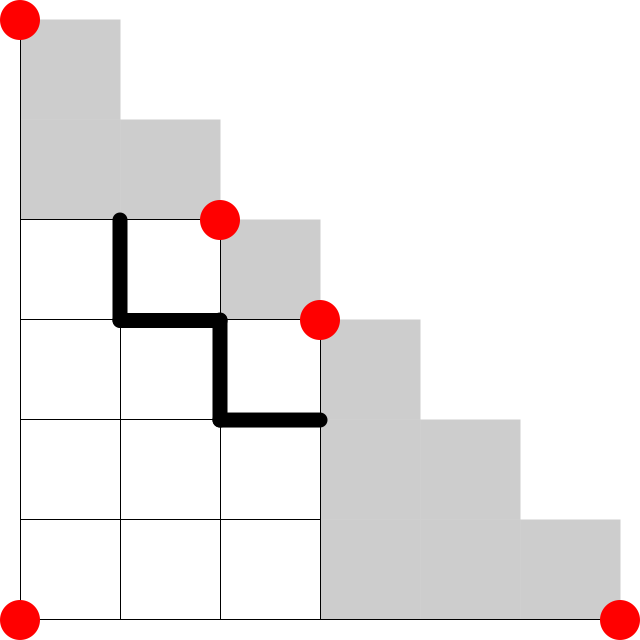}
\caption{An example of the chain containing facet $F$.}
\label{fig:a_2=1}
\end{figure}

Therefore, $|G(X_1 \cup X_2)| = 1$ and we have 
\begin{align*}
|G| &= |O_{G}(X_1)||O_{G(X_1)}(X_2)||G(X_1 \cup X_2)| \\
&\le a_2!^{\delta_{a_1,1}} \cdot a_{m-1}!^{\delta_{a_{m},1}} \cdot 2^{r_1+1} \cdot 2^{r_2} \\ 
&= |(S_{a_2}^{\delta_{1,a_1}} \times S_{a_{m-1}}^{\delta_{1,a_{m}}} \times \Z_2^{r_1+1}) \ltimes_\varphi \Z_2^{r_2}|.
\end{align*}
Since $(S_{a_2}^{\delta_{1,a_1}} \times S_{a_{m-1}}^{\delta_{1,a_{m}}} \times \Z_2^{r_1+1}) \ltimes_\varphi \Z_2^{r_2}$ is contained in $\Aut(\GT_\lam)$, we have $\Aut(\GT_\lam) \cong (S_{a_2}^{\delta_{1,a_1}} \times S_{a_{m-1}}^{\delta_{1,a_{m}}} \times \Z_2^{r_1+1}) \ltimes_\varphi \Z_2^{r_2}$, as desired.
\end{proof}


\section*{Acknowledgements}
This research was carried out as part of the 2016 REU program at the School of Mathematics at University of Minnesota, Twin Cities, and was supported by NSF RTG grant DMS-1148634 and NSF grant DMS-1351590. The authors are especially grateful to Victor Reiner for his mentorship and support, and for many fruitful conversations. The authors would also like to thank Elise delMas and Craig Corsi for their valuable advice and comments. 

\bibliography{main}
\bibliographystyle{alpha}
\end{document}